\definecolor{darkblue}{rgb}{0,0,0.6}
\title{The Quasi-hyperbolicity Constant of a Metric Space}
\author[Dragomir]{George Dragomir}
\address{George Dragomir, Department of Mathematics, Columbia University, New York, NY, USA 10027}
\email{dragomir@math.columbia.edu}
 \author[Nicas]{Andrew Nicas}
\address{Department of Mathematics and Statistics, McMaster University, Hamilton, Ontario, Canada L8S 4K1}
\email{nicas@mcmaster.ca}
\thanks{Andrew Nicas was partially supported by a grant from the Natural Sciences and Engineering Research Council of Canada}
\newcounter{commentcounter}
\subjclass[2010]{Primary: 51K05, Secondary: 46B20, 51F99, 51M10}
\keywords{$\delta$-hyperbolic, quasi-isometry, rough isometry, Banach space, snowflake metric}
\date{\today}
\newcommand{\IR}{\mathbb{R}}
\newcommand{\R}{\mathbb{R}}
\DeclareMathOperator{\CAT}{CAT}
\numberwithin{equation}{section}
\newtheorem{thm}[equation]{Theorem}
\newtheorem{conj}[equation]{Conjecture}
\newtheorem{prop}[equation]{Proposition}
\newtheorem{cor}[equation]{Corollary}
\newtheorem{lemma}[equation]{Lemma}
\newtheorem{cor+}{Corollary}
\newtheorem{prop+}[cor+]{Proposition}
\theoremstyle{definition}
\newtheorem{example}[equation]{Example}
\newtheorem{defi}[equation]{Definition}
\newtheorem{question}[equation]{Question}
\newtheorem*{definition}{Definition}
\newtheorem*{thmm}{Theorem}
\newtheorem*{corollary}{Corollary}
\newtheorem{rem}[equation]{Remark}
\begin{document}

\begin{abstract}
 We introduce the {\it quasi-hyperbolicity constant} of a metric space,
a rough isometry invariant that measures how a metric space  deviates from being Gromov hyperbolic.
Gromov hyperbolicity, and also the lack thereof, has attracted considerable interest in the theory of networks.
The quasi-hyperbolicity constant for an unbounded space lies in the closed interval $[1,2]$.
It is equal to one for an unbounded  Gromov hyperbolic space.
For a  CAT$(0)$-space, it is bounded from above by $\sqrt{2}$.
The quasi-hyperbolicity constant of a Banach space that is at least two dimensional is bounded from below by $\sqrt{2}$,
and for a non-trivial $L_p$-space it is exactly $\max\{2^{1/p},2^{1-1/p}\}$.
If $0 < \alpha < 1$ then the quasi-hyperbolicity constant of the $\alpha$-snowflake of any metric space is bounded from above by $2^\alpha$.
We give an exact calculation in the case of the $\alpha$-snowflake of the Euclidean real line.
\end{abstract}

\maketitle

\baselineskip 15pt

\section{Introduction}

        Gromov hyperbolic spaces were introduced by Gromov in his seminal paper \cite{Gro:87Hyp:aa}
to study infinite groups as geometric objects.
For a metric space  $(X,d)$,
we use the abbreviated notation 
$xy=d(x,y)$ where convenient.
Recall that for three points  $x,y,w$ in a metric space $(X,d)$, the {\it Gromov product} of $x$ and $y$ with respect to $w$ is defined~as
\begin{equation*}
(x\mid y)_w = \tfrac{1}{2}\left( xw+yw - xy \right).
\end{equation*} 
Given a non-negative constant $\delta$, the metric space $(X,d)$ is said to be {\it $\delta$-hyperbolic} if
\[
(x\mid y)_w\ge\min\left\{(x\mid z)_w, (y\mid z)_w\right\}-\delta
\]
for all $x,y,z,w\in X$.
A metric space $(X,d)$ is said to be {\it Gromov hyperbolic} if it is  
$\delta$-hyperbolic for some $\delta$.
Any $\IR$-tree is $0$-hyperbolic. 
Another well-known example is the hyperbolic plane, which is $\log(2)$-hyperbolic,  \cite[Corollary 5.4]{Nica-Spakula}.
Euclidean spaces of dimension greater than one are not Gromov hyperbolic. 
While Gromov hyperbolicity is a quasi-isometry invariant for {\it intrinsic} metric spaces \cite[Theorems 3.18 and 3.20]{Vai:05Gro:aa},
quasi-isometry invariance can fail for non-intrinsic spaces, see \cite[Remark~3.19]{Vai:05Gro:aa} and also our examples in \S\ref{sec:basic_examples}.
In particular, a metric space that quasi-isometrically embeds into a Gromov hyperbolic space need not be Gromov hyperbolic.

A metric space $(X,d)$ is $\delta$-hyperbolic if and only if the {\it the four-point inequality} holds, that is,
for all $x,y,z,w\in X$,
 \begin{equation*} 
xy + zw \le\max\{xz+yw, \, yz+xw\}+2\delta,
\end{equation*}
see  \cite[(2.12)]{Vai:05Gro:aa}.   

We generalize the four-point inequality as follows.
Let $(X,d)$ be a metric space.
Let $\mu, \delta \geq 0$.
We say that a metric space $(X,d)$ satisfies the \textit{$(\mu,\delta)$-four-point inequality} if for all $x,y,z,w \in X$,
\[
xy+zw\le \mu\max\{xz+yw, \, xw+yz\}+2\delta.
\]
In particular,  $(X,d)$ is $\delta$-hyperbolic if and only if it satisfies the $(1,\delta)$-four-point inequality.

We introduce the following numerical constants associated to a metric space.

\begin{definition}{\rm (Quasi-hyperbolicity and quadrilateral constants)} 
Let $(X,d)$ be a metric space.
\begin{itemize}
\item[(i)]  The {\it quasi-hyperbolicity constant} of $(X,d)$ is the number
\begin{align*}\label{eq:qhc}
C(X,d)   = \inf\{\mu ~|~  \mbox{ there exists $\delta \geq 0$}& \mbox{ such that $(X,d)$ satisfies} \\
& \mbox{the $(\mu,\delta)$-four-point inequality}\}. 
\end{align*}

\item[(ii)] The {\it quadrilateral constant} of $(X,d)$ is the number  
\begin{equation*}\label{eq:qhc0}
C_0(X,d) = \inf\{\mu ~|~  \text{$(X,d)$ satisfies the $(\mu,0)$-four-point inequality}\}. 
\end{equation*}

\end{itemize}
\end{definition}


Our motivation for the introduction of the quasi-hyperbolicity constant
and the closely related quadrilateral constant
originates from the theory of networks.
Many complex systems can be modeled by finite metric spaces
and the geometry of these spaces is related to their structure and function.
For example, the concept of a $\delta$-hyperbolic metric space has been effectively applied to network security, \cite{JL2004, MR2837770},
and to biological and social networks, \cite{PhysRevE.89.032811}.
Note that when considering a finite metric space, Gromov's constant $\delta$ should be taken to be appropriately smaller than the diameter of the space
as otherwise the four-point inequality would be trivial.
However, not every interesting network is $\delta$-hyperbolic, see \cite{MR2902244}. 
Unbounded metric spaces, which are the principal focus of this paper,
are still useful in the context of finite metric spaces as there is utility in embedding a finite metric space with controlled distortion and roughness into large metric spaces with well-understood properties,
in particular with known quasi-hyperbolicity and quadrilateral constants.
The quasi-hyperbolicity constant of an unbounded metric space measures its deviation from being Gromov hyperbolic.

Some basic properties of the quasi-hyperbolicity  and quadrilateral constants of a metric space $(X,d)$ are readily derived,
for example:
\begin{itemize}
\item $C(X,d) \leq C_0(X,d) \leq 2$,
\item if $(X,d)$ is bounded then $C(X,d) =0$, otherwise $C(X,d) \geq 1$,
\item if $(X,d)$ has at least two points then it is $0$-hyperbolic if and only if $C_0(X,d) =1$,
\item if $(X,d)$ is Gromov hyperbolic and unbounded then $C(X,d) =1$.
\end{itemize}
Proofs of these and more properties are given in \S~\ref{sec:elementary}.

In the absence of additional hypotheses,
it is not true that $C(X,d) =1$ implies $(X,d)$ is Gromov hyperbolic.
For example, given $0< \alpha < 1$,
consider the graph, $Y_\alpha$,
of $y = x^\alpha$, $x \geq 0$, as a subspace of the Euclidean plane, $(\IR^2,d_E)$.
We show  $C(Y_\alpha, d_E) =1$, Proposition~\ref{prop:quasiconstantone}, 
however
$Y_\alpha$ is {\it not}
Gromov hyperbolic
if and only if $1/2 < \alpha < 1$,
Propositions~\ref{prop:lessthanorequaltoonehalf}  and ~\ref{prop:notgromovhyperbolic}.
Nevertheless, 
if $(X,d)$ is a proper $\CAT(0)$-space and $C(X,d) =1$
then $(X,d)$ is Gromov hyperbolic,
see Proposition~\ref{prop:quasiconstantoneCATzero} and
Question~\ref{ques:ishyperbolic}.

The appearance of a possibly positive $\delta$ in a $(\mu,\delta)$-four-point inequality suggests that $C(X,d)$ can be insensitive to small scales.
Indeed, $C(X,d)$ is a rough isometry invariant of $(X,d)$, Corollary~\ref{cor:roughiso}.
Quasi-isometry is a less stringent condition than rough isometry and 
$C(X,d)$ is {\it not} a quasi-isometry invariant of $(X,d)$.
Examples of this phenomenon are given in \S\ref{sec:basic_examples}.

While the quadrilateral constant,  $C_0(X,d)$, is obviously an isometry invariant it is not a rough isometry invariant; moreover,
the constants $C_0(X,d)$  and $C(X,d)$ need not coincide.
For example,
if $(H^2,d_H)$ is the hyperbolic plane then  $C(H^2,d_H) = 1 < \sqrt{2} = C_0(H^2,d_H)$,
see Example~\ref{ex:hyperbolicspace}.
The intuition supporting this example is that very small quadrilaterals in $H^2$ are approximately Euclidean 
and contribute to  $ C_0(H^2,d_H)$
but not to $ C(H^2,d_H)$.
For spaces $(X,d)$ that are ``four-point scalable in the large'' 
(which we abbreviate as ``scalable'', see Definition~\ref{def:4pointscalable})
we show, Proposition~\ref{prop:4pointscalable}, that $C_0(X,d) = C(X,d)$.
Examples of such spaces include Banach spaces and their metric snowflakes.

A $\CAT(0)$-space is a geodesic metric space whose geodesic triangles are not fatter than corresponding comparison triangles in the Euclidean plane.
Simply connected,
complete Riemannian manifolds of non-positive sectional curvature are familiar examples of  $\CAT(0)$-spaces.
We show, Theorem~\ref{thm:Ptround}, that  the quadrilateral constant  of  a metric space whose distance satisfies Ptolemy's inequality and
the quadrilateral inequality, in particular any $\CAT(0)$-space,  is bounded from above by $\sqrt{2}$. 
The quasi-hyperbolicity constant of any Euclidean space of dimension greater than one is equal to $\sqrt{2}$, Proposition~\ref{prop:euclidean}.

Banach spaces are a particularly important class of metric spaces and
their geometric properties have been extensively studied,
\cite{Johnson-Lindenstrauss}.
For a Banach space $B$ with the metric determined by its norm, we write $C(B)$ for its quadrilateral constant,
which coincides with its quasi-hyperbolicity constant since Banach spaces are scalable.
We observe
that $C(B) \geq J(B)$ where $J(B)$ is the {\it James constant} of $B$, see ~\eqref{eq:James}.
Strong results for the James constant of a Banach space due to Gao and Lau, \cite{Gao-Lau}, and to
Komuro, Saito and Tanaka, \cite{Komuro-Saito}, lead to the following conclusion about $C(B)$.

\begin{thmm}{\rm (Theorem~\ref{thm:banachestimate}) }
If $B$ is a  Banach space with $\dim B >  1$ then $C(B)  \ge\sqrt{2}$.
If $\dim B \geq 3$ and $C(B) = \sqrt{2}$ then $B$ is a Hilbert space.
 \end{thmm}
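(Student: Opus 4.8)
The plan is to deduce both assertions from the inequality $C(B)\ge J(B)$ recorded above, where $J(B)$ denotes the James (non-square) constant
\[
J(B)=\sup\bigl\{\min\{\|x+y\|,\|x-y\|\}\ :\ x,y\in B,\ \|x\|=\|y\|=1\bigr\}
\]
of \eqref{eq:James}, together with two known facts about $J$. For completeness I would first recall the short justification of $C(B)\ge J(B)$: given unit vectors $x,y\in B$ and a scale $t>0$, apply the $(\mu,\delta)$-four-point inequality to the four vertices $0,\ tx,\ t(x+y),\ ty$ of the parallelogram with side vectors $tx$ and $ty$, taking as the left-hand pair the two diagonals $\{0,\,t(x+y)\}$ and $\{tx,\,ty\}$. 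Since all four sides have length $t$, this reads $t\bigl(\|x+y\|+\|x-y\|\bigr)\le 2\mu t+2\delta$; dividing by $t$ and letting $t\to\infty$ gives $\|x+y\|+\|x-y\|\le 2\mu$, hence $\min\{\|x+y\|,\|x-y\|\}\le\mu$. Taking the supremum over unit vectors and then the infimum over admissible $\mu$ yields $J(B)\le C(B)$.

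For the first assertion I would simply invoke the classical lower bound (see Gao and Lau \cite{Gao-Lau}) that $J(B)\ge\sqrt2$ for every Banach space $B$ with $\dim B\ge 2$; combined with $C(B)\ge J(B)$ this gives $C(B)\ge\sqrt2$. Here $\dim B>1$ is essential, since a one-dimensional Banach space is isometric to $\IR$, an unbounded $0$-hyperbolic space, and hence has quasi-hyperbolicity constant equal to $1$.

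For the second assertion, suppose $\dim B\ge 3$ and $C(B)=\sqrt2$. Then $\sqrt2\le J(B)\le C(B)=\sqrt2$, so $J(B)=\sqrt2$, and I would finish by quoting the rigidity theorem of Komuro, Saito and Tanaka \cite{Komuro-Saito}, which refines the analysis of Gao and Lau: a Banach space of dimension at least three whose James constant equals $\sqrt2$ is linearly isometric to a Hilbert space. Therefore $B$ is a Hilbert space.

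I do not anticipate a genuine obstacle: all of the substance is contained in the two cited theorems on the James constant, and the only step requiring an argument of our own is the elementary inequality $C(B)\ge J(B)$ sketched above. The point that most deserves emphasis in the write-up is that the dimension hypotheses are sharp: in particular, $\dim B\ge 3$ cannot be relaxed to $\dim B\ge 2$ in the rigidity statement, because there exist two-dimensional normed spaces that are not inner product spaces yet have James constant $\sqrt2$, so the cited characterization (and hence our conclusion) genuinely requires dimension at least three.
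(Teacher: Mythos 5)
Your proposal is correct and follows essentially the same route as the paper: establish $C(B)\ge J(B)$ via the parallelogram configuration $x,y,0,x+y$ (the paper gets this from $C(B)=C_0(B)$ and $\Delta(x,y,0,x+y)=\tfrac12(\|x-y\|+\|x+y\|)$, whereas you rescale by $t$ and let $t\to\infty$ to absorb $\delta$ --- the same idea), and then quote Gao--Lau for $J(B)\ge\sqrt2$ and Komuro--Saito--Tanaka for the Hilbert-space rigidity in dimension at least three. No gaps.
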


Enflo \cite{Enflo1969} introduced the notion of the
{\it roundness} of a metric space, Definition~\ref{def:round}, which is a real number greater than or equal to one.
We show:

\begin{thmm}{\rm (Theorem~\ref{thm:round})}
If $B$ is a Banach space with roundness $r(B)$ then $C(B) \leq 2^{1/r(B)}$.
 \end{thmm}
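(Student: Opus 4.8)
The plan is to show that $B$ satisfies the $(2^{1/p},0)$-four-point inequality with $p=r(B)$; since then $\mu=2^{1/r(B)}$ is admissible (with $\delta=0$) in the infimum defining $C(B)$, this gives $C(B)\le 2^{1/r(B)}$. Here I use that $B$ obeys the roundness-$r(B)$ inequality, i.e.\ that the supremum defining the roundness is attained --- the set of admissible exponents is closed, being an intersection of closed conditions.

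The heart of the matter is a two-vector inequality: for $p=r(B)$ and all $a,c\in B$,
\[
\|a\|+\|c\|\ \le\ 2^{1/p}\,\max\{\|a+c\|,\,\|a-c\|\}.
\]
To prove it I would apply the four-point instance of the roundness-$p$ inequality to the quadruple $a,-a,c,-c$, using $\{a,-a\}$ and $\{c,-c\}$ as the two ``diagonal'' pairs (the degenerate cases $a=0$, $c=0$, $a=\pm c$ being trivial). This gives $(2\|a\|)^p+(2\|c\|)^p\le 2\big(\|a+c\|^p+\|a-c\|^p\big)$, hence
\[
2^{p-1}\big(\|a\|^p+\|c\|^p\big)\ \le\ \|a+c\|^p+\|a-c\|^p\ \le\ 2\,\max\{\|a+c\|,\,\|a-c\|\}^p .
\]
Together with the convexity estimate $\|a\|^p+\|c\|^p\ge 2^{1-p}(\|a\|+\|c\|)^p$ (Jensen for $t\mapsto t^p$, valid as $p\ge1$), this yields $(\|a\|+\|c\|)^p\le 2\,\max\{\|a+c\|,\,\|a-c\|\}^p$, which is the claim.

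Granting this, the four-point inequality for arbitrary $x,y,z,w\in B$ follows by setting $a=x-y$ and $c=z-w$: then $xy+zw=\|a\|+\|c\|$, while the triangle inequality gives $\|a-c\|=\|(x-z)-(y-w)\|\le xz+yw$ and $\|a+c\|=\|(x-w)-(y-z)\|\le xw+yz$, so $\max\{\|a+c\|,\|a-c\|\}\le\max\{xz+yw,\,xw+yz\}$. Substituting into the two-vector inequality gives $xy+zw\le 2^{1/p}\max\{xz+yw,\,xw+yz\}$, i.e.\ the $(2^{1/p},0)$-four-point inequality with $p=r(B)$, which completes the proof.

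The only real difficulty is choosing the configuration to feed into roundness. Applying the four-point roundness inequality directly to $x,y,z,w$ and using the triangle inequality incurs power-mean losses and does not reach the sharp exponent; the remedy is to exploit the vector-space structure and use the symmetric quadruple $\{a,-a,c,-c\}$, which recasts roundness as the Clarkson-type inequality displayed above. That step is lossless on square-shaped configurations inside two-dimensional $\ell_p$-subspaces, which is what produces the exponent $2^{1/p}$ rather than a weaker one; sharpness of the overall bound $2^{1/r(B)}$ is visible already in $\IR^2$ with the sup-norm, which has roundness $1$ and quasi-hyperbolicity constant $2$. The one technical nicety, dispatched in the first paragraph, is the attainment of the supremum defining $r(B)$, which lets one work with $p=r(B)$ directly instead of passing to a limit.
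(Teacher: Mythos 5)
Your proposal is correct and follows essentially the same route as the paper: the quadruple $\{a,-a,c,-c\}$ you feed into the roundness inequality is, after translating by $a$, exactly the configuration $\{e+f,\,e-f,\,2e,\,0\}$ used in the paper's Lemma \ref{lem:round}, and the subsequent steps (the power-mean estimate $(\|a\|+\|c\|)^p\le 2^{p-1}(\|a\|^p+\|c\|^p)$, the bound $u^p+v^p\le 2\max\{u,v\}^p$, and the reduction of the four-point inequality to the two-vector inequality via $a=x-y$, $c=z-w$ and the triangle inequality) coincide with the paper's, differing only in the order in which the triangle inequality and the max bound are applied. Your side remark on attainment of the supremum defining $r(B)$ likewise matches the paper's observation following Definition \ref{def:round}.
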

 
 This estimate allows us to calculate the quadrilateral constant of a non-trivial  $L_p$-space.

\begin{corollary}{\rm (Corollary~\ref{cor:lp})}
For a separable measure space $(\Omega, \Sigma, \mu)$ and $1 \leq p \leq \infty$,
let  $L_p(\Omega, \Sigma, \mu)$ be the corresponding $L_p$-space.
If $ \dim L_p(\Omega, \Sigma, \mu) \geq 2$ 
then 
$C(L_p(\Omega, \Sigma, \mu)) =  2^{1/p}$ if $1\le p\le 2$ and $C(L_p(\Omega, \Sigma, \mu)) = 2^{1-1/p}$ if $2\le p\le \infty$. 
\end{corollary}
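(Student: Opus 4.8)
The plan is to sandwich $C(L_p)$ between the James constant $J(L_p)$ and the roundness bound $2^{1/r(L_p)}$, using the two facts already at hand: $C(B)\ge J(B)$ for every Banach space $B$ (the observation recorded at \eqref{eq:James}) and $C(B)\le 2^{1/r(B)}$ (Theorem~\ref{thm:round}, with $r$ the roundness of Definition~\ref{def:round}). Thus it suffices to prove
\[
J(L_p)\ge\max\{2^{1/p},2^{1-1/p}\}
\qquad\text{and}\qquad
2^{1/r(L_p)}\le\max\{2^{1/p},2^{1-1/p}\}.
\]
Throughout write $p'$ for the conjugate exponent, $1/p+1/p'=1$, with the conventions $p'=\infty$ when $p=1$ and $p'=1$ when $p=\infty$; the elementary identity $2^{1/\min\{p,p'\}}=2^{\max\{1/p,\,1-1/p\}}=\max\{2^{1/p},2^{1-1/p}\}$ will be used silently.

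For the second inequality I would invoke the value of the roundness of an $L_p$-space, namely $r(L_p(\Omega,\Sigma,\mu))=\min\{p,p'\}$. Only the lower bound $r(L_p)\ge\min\{p,p'\}$ is actually needed, and it is the one nontrivial ingredient; it is due to Enflo~\cite{Enflo1969} and rests on Clarkson's inequalities. (The matching upper bound $r(L_p)\le\min\{p,p'\}$, which I would mention for context, comes from two quadruples inside an isometrically embedded $\ell_p^2$: the ``square'' $0,e_1,e_1+e_2,e_2$ forces roundness $\le p$, and the ``cross'' $e_1,e_2,-e_1,-e_2$ forces roundness $\le p'$.) Given $r(L_p)\ge\min\{p,p'\}$, Theorem~\ref{thm:round} yields $C(L_p)\le 2^{1/r(L_p)}\le 2^{1/\min\{p,p'\}}=\max\{2^{1/p},2^{1-1/p}\}$.

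For the first inequality, note that $\dim L_p\ge 2$ forces the underlying measure space to contain two disjoint measurable sets of positive measure; using $\sigma$-finiteness (a consequence of separability) one may take them of finite measure when $p<\infty$, and then their normalized indicators are disjointly supported norm-one elements $f,g\in L_p$ (when $p=\infty$, plain indicators of two disjoint sets of positive measure already work). Since $f$ and $g$ have disjoint supports, $\|f\pm g\|_p=2^{1/p}$, whence $J(L_p)\ge 2^{1/p}$; and $x:=2^{-1/p}(f+g)$, $y:=2^{-1/p}(f-g)$ are again norm-one, with $\|x+y\|_p=\|x-y\|_p=2^{1-1/p}$, whence $J(L_p)\ge 2^{1-1/p}$. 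Therefore $J(L_p)\ge\max\{2^{1/p},2^{1-1/p}\}$ and hence $C(L_p)\ge J(L_p)\ge\max\{2^{1/p},2^{1-1/p}\}$. Combined with the previous paragraph, $C(L_p)=\max\{2^{1/p},2^{1-1/p}\}$.

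The only genuinely hard ingredient is Enflo's lower bound on the roundness of $L_p$, which I would cite rather than reprove; everything else is bookkeeping. The point that most warrants care is pinning down the correct roundness — the relevant value is $\min\{p,p'\}$, which is strictly below $2$ for $p\ne 2$, so that the estimate of Theorem~\ref{thm:round} is in fact sharp enough to meet the James-constant bound. Minor nuisances are the conjugate-exponent degeneracies at $p\in\{1,\infty\}$ (where $\max\{2^{1/p},2^{1-1/p}\}$ collapses to the single term $2$) and the non-separability of $L_\infty$ for non-atomic $\mu$, neither of which obstructs the two constructions above.
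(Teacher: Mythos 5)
Your proof is correct, and the upper bound is the same as the paper's: the roundness of $L_p$ is $\min\{p,p'\}$ (cited from Enflo for $1\le p\le 2$ and, in the paper, from Lennard--Tonge--Weston for $2\le p\le\infty$ --- your blanket attribution to Enflo is slightly off for $p>2$, but that is a citation detail), and Theorem~\ref{thm:round} then gives $C(L_p)\le 2^{1/\min\{p,p'\}}$; your remark that only the lower bound on roundness is needed is accurate, since $2^{1/r}$ is decreasing in $r$. Where you genuinely diverge is the lower bound. The paper invokes the classification theory of $L_p$-spaces to conclude that $L_p(\Omega,\Sigma,\mu)$ is isometric to one of $\ell^n_p$, $\ell_p$, $L_p(0,1)$, or a $p$-direct sum of these, each of which contains an isometric copy of $\ell^2_p$, and then applies Proposition~\ref{prop:ell2p}. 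You instead produce the needed copy of $\ell^2_p$ by hand: $\dim L_p\ge 2$ yields two disjoint sets of positive (finite, when $p<\infty$) measure, whose normalized indicators $f,g$ are disjointly supported unit vectors, and the two quadruples built from $f\pm g$ give $J(L_p)\ge\max\{2^{1/p},2^{1-1/p}\}$, hence the lower bound via \eqref{eq:James}. The extremal configurations are in substance the same ``square'' and ``cross'' that appear in the proof of Proposition~\ref{prop:ell2p}, so the mathematical content of the lower bound coincides; what your route buys is the elimination of the classification theorem, which is heavy machinery for what is needed here, at the cost of a short (and routine, but worth stating) measure-theoretic argument that $\dim L_p\ge 2$ forces two disjoint sets of positive measure. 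Your handling of the degenerate cases $p\in\{1,\infty\}$ is also correct.
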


If $(X,d)$ is any metric space and $0 < \alpha < 1$ then
$(X,d^\alpha)$ is also a metric space, called the {\it $\alpha$-snowflake} 
of $(X,d)$.
We show, Theorem~\ref{thm:snowflake}, that $C_0(X,d^\alpha) \leq 2^\alpha$.  Applying this estimate, we calculate, Proposition \ref {prop:Linfinitysnowflake},
the quadrilateral constant of the $\alpha$-snowflake of $(\IR^n,d_\infty)$, where $d_\infty$ is the $L_\infty$-metric 
(``max metric'') on $\IR^n$:
For $n \geq 2$, $C(\IR^n, d_\infty^\alpha)=2^\alpha$.
Note that the $\alpha$-snowflake  of a Banach space is scalable and so
the quasi-hyperbolicity and quadrilateral constants coincide for such spaces.
The quadrilateral constant of the $\alpha$-snowflake of of the Euclidean line $(\IR^1, d_E)$ can be determined
by solving an associated  optimization problem,
yielding the following calculation.

\begin{thmm}{\rm (Theorem~\ref{thm:lineflake})}
Let $0 < \alpha \leq 1$.
Let $m \geq 1$ be the unique  solution to the equation 
 $(m-1)^\alpha + (m+1)^\alpha =2$.
Then $C_0(\IR^1, d^\alpha_E) = m^{\alpha}$.
\end{thmm}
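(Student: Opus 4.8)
The plan is to reduce the computation of $C(\IR^1, d_E^\alpha)$ to a one‑variable optimization problem. Since $(\IR^1, d_E^\alpha)$ is the $\alpha$-snowflake of the Banach space $\IR$, it is four-point scalable in the large, so $C(\IR^1, d_E^\alpha) = C_0(\IR^1, d_E^\alpha)$ by Proposition \ref{prop:4pointscalable}, and it suffices to compute $C_0$. Write $d = d_E^\alpha$. A $(\mu,0)$-four-point inequality need only be verified on ordered quadruples $x_1 \le x_2 \le x_3 \le x_4$ in $\IR$, and with $p = x_2 - x_1$, $q = x_3 - x_2$, $r = x_4 - x_3$ the three sums of $d$-distances over complementary pairs are
\[
S_1 = p^\alpha + r^\alpha, \qquad S_2 = (p+q)^\alpha + (q+r)^\alpha, \qquad S_3 = (p+q+r)^\alpha + q^\alpha .
\]
Because $t \mapsto t^\alpha$ is nondecreasing and concave one has $S_1 \le S_2$ and $S_3 \le S_2$, so $S_2 = \max\{S_1, S_2, S_3\}$ and the only binding instance of the four-point inequality is $S_2 \le \mu \max\{S_1, S_3\}$; since moreover each $S_i$ is homogeneous of degree $\alpha$ in $(p,q,r)$,
\[
C_0(\IR^1, d) = \sup_{p,q,r \ge 0} \frac{S_2}{\max\{S_1, S_3\}}.
\]

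For the lower bound I would plug in $(x_1, x_2, x_3, x_4) = (0, 1, m, m+1)$, i.e.\ $p = r = 1$ and $q = m-1$: then $S_1 = 2$, $S_2 = 2m^\alpha$, and $S_3 = (m+1)^\alpha + (m-1)^\alpha = 2$ by the defining equation for $m$, so this quadruple realizes $S_2/\max\{S_1, S_3\} = m^\alpha$ and hence $C_0(\IR^1, d) \ge m^\alpha$.

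The content is the matching upper bound. Fix $p, r$ and regard $R(q) = S_2/\max\{S_1, S_3\}$ as a function of $q \ge 0$. Since $q \mapsto S_3$ is continuous and strictly increasing, starting from $S_3|_{q=0} = (p+r)^\alpha \le p^\alpha + r^\alpha = S_1$, there is a unique $q_0 = q_0(p,r)$ with $S_1 = S_3$. For $q \le q_0$ one has $\max\{S_1, S_3\} = S_1$, so $R(q) = S_2/S_1$ is increasing in $q$; for $q \ge q_0$ one has $\max\{S_1, S_3\} = S_3$, so $R(q) = S_2/S_3$, and the key claim is that this is \emph{decreasing} in $q$. Indeed $\frac{d}{dq}\log(S_2/S_3) \le 0$ reduces, after cancelling $\alpha$, to $F(p+q, q+r) \le F(p+q+r, q)$ where $F(u,v) = (u^{\alpha-1} + v^{\alpha-1})/(u^\alpha + v^\alpha)$; the two pairs $(p+q, q+r)$ and $(p+q+r, q)$ have the same sum, with the second more spread out, and $F(u,v)$ increases when a pair of positive reals is spread apart with the sum held fixed (its numerator increases by convexity of $t \mapsto t^{\alpha-1}$, its denominator decreases by concavity of $t \mapsto t^\alpha$). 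Hence $\sup_{q} R(q) = R(q_0)$, and it remains to bound $S_2/S_1$ on the locus $\{S_1 = S_3\}$.

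On that locus, using homogeneity I would normalize $p + r = 2$ and set $p = 1 - t$, $r = 1 + t$; the relation $S_1 = S_3$ becomes $(1-t)^\alpha + (1+t)^\alpha = (2+q)^\alpha + q^\alpha$, which is solvable with $t \in [0,1]$ exactly for $q \in [0, m-1]$ and then determines $t = t(q)$, with $t(0) = 1$ and $t(m-1) = 0$. Then $S_1 = 2$ and $S_2 = h(q) := (1+q-t(q))^\alpha + (1+q+t(q))^\alpha$, so $R(q_0) = h(q)/2$; a direct computation gives $t'(q) < 0$ and therefore $h'(q) > 0$ on $(0, m-1)$, so $h$ is maximized at $q = m-1$, where $t = 0$ and $h(m-1) = 2m^\alpha$. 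Thus $S_2/S_1 \le m^\alpha$ on the locus, which with the lower bound gives $C(\IR^1, d_E^\alpha) = C_0(\IR^1, d_E^\alpha) = m^\alpha$; the case $\alpha = 1$ is the degenerate one, with $m = 1$, consistent with $(\IR, d_E)$ being an $\IR$-tree. I expect the main obstacle to be precisely this upper bound: the ratio $S_2/\max\{S_1, S_3\}$ is non-smooth, so a bare Lagrange-multiplier computation would not rule out competing asymmetric extrema, and it is the two monotonicity statements — in $q$ for fixed $(p,r)$, then in $q$ along $\{S_1 = S_3\}$ — that pin the extremal quadruple down to the symmetric configuration $\{0, 1, m, m+1\}$.
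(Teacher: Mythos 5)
Your reduction and the first half of the optimization are sound: $C=C_0$ by scalability, the identification of $S_2=(p+q)^\alpha+(q+r)^\alpha$ as the largest of the three pair-sums (so that only $S_2\le\mu\max\{S_1,S_3\}$ is binding), the lower bound from $\{0,1,m,m+1\}$, and the slicing argument in $q$ showing that for fixed $p,r$ the ratio increases up to the crossing point where $S_1=S_3$ and decreases afterwards (the decrease via the majorization comparison of $(p+q,q+r)$ with $(p+q+r,q)$) are all correct. That slicing argument is a legitimate alternative to the paper's Lemma \ref{lem:max_on_intersection}, which instead checks that $F$ and $G$ have no interior critical points on $D_1$, $D_2$ and equal $1$ on the remaining boundary pieces. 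The problem is your final step, which is where essentially all of the difficulty of the theorem lives.

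On the locus $\{S_1=S_3\}$ with the normalization $p+r=2$, $p=1-t$, $r=1+t$, you assert $S_1=2$. That is false: $S_1=(1-t(q))^\alpha+(1+t(q))^\alpha=(2+q)^\alpha+q^\alpha$, which equals $2$ only at the endpoint $q=m-1$ (where $t=0$) and is strictly less than $2$ for $0\le q<m-1$. Hence $R(q_0)=h(q)/\bigl[(2+q)^\alpha+q^\alpha\bigr]$, not $h(q)/2$, and because the true denominator is smaller than $2$, the facts $h'(q)>0$ and $h(m-1)=2m^\alpha$ give only $h(q)\le 2m^\alpha$, which does not imply $R(q_0)\le m^\alpha$; what you need is the strictly stronger inequality $h(q)\le m^\alpha\bigl[(2+q)^\alpha+q^\alpha\bigr]$. (Separately, ``$t'<0$ therefore $h'>0$'' is not immediate: $h'=\alpha\bigl[(1+q-t)^{\alpha-1}(1-t')+(1+q+t)^{\alpha-1}(1+t')\bigr]$, and the second summand is negative wherever $t'<-1$.) The missing inequality is precisely the assertion that the maximum of $S_2/S_1$ over the locus $\{S_1=S_3\}$ occurs at the symmetric configuration $p=r$; this is the content of the paper's Lemma \ref{lem:max_on_diagonal}, proved there by a tangency-of-level-curves condition via the implicit function theorem, which after the substitution $x=(1-b)/(1-a)$, $y=(1+b)/(1+a)$ reduces to the monotonicity of $t\mapsto(1-t^{1-\alpha})/(1-t)^{1-\alpha}$ on $(0,1)$. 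Your proposal replaces this step with an algebraic slip, so the upper bound $C(\IR^1,d_E^\alpha)\le m^\alpha$ is not established.
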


\section{Quasi-hyperbolicity  and quadrilateral constants}\label{sec:elementary}

We derive basic properties of the quasi-hyperbolicity constant and the quadrilateral constant of a metric space
and examine their general behavior with regard to quasi-isometric embedding and, respectively, bilipschitz embedding.

Recall the following definition from the introduction.

\begin{defi}\label{def:quasi4point}
Let $\mu, \delta \geq 0$.
We say that a metric space $(X,d)$ satisfies the \textit{$(\mu,\delta)$-four-point inequality} if for all $x,y,z,w \in X$,
\[
xy+zw\le \mu\max\{xz+yw, \, xw+yz\}+2\delta.
\]
\end{defi}

We make the following elementary observation concerning this definition.

\begin{prop} \label{prop:basics}
Let $(X,d)$ be a metric space.
\begin{itemize}
\item[(i)]  $(X,d)$ satisfies the $(2, 0)$-four-point inequality,

\item[(ii)] If $(X,d)$ is unbounded and satisfies the $(\mu,\delta)$-four-point inequality then $\mu \geq 1$.

\item[(iii)] If $(X,d)$ is bounded with diameter $D$ then it satisfies the $(0,D)$-four-point \hbox{inequality.}
\end{itemize}
\end{prop}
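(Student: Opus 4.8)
The plan is to prove all three statements directly, using only the triangle inequality and the definition of diameter. The one step that needs a small idea is (i); the rest is immediate.

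For (i), I would bound each of the two ``diagonal'' distances $xy$ and $zw$ by the average of the two quantities $xz+yw$ and $xw+yz$. Indeed, the triangle inequality gives $xy\le xz+zy$ and $xy\le xw+wy$, and summing these two yields $2\,xy\le (xz+yw)+(xw+yz)$; the same reasoning applied to the pair $z,w$ gives $2\,zw\le (xz+yw)+(xw+yz)$. Adding these and halving produces
\[
xy+zw\le (xz+yw)+(xw+yz)\le 2\max\{xz+yw,\ xw+yz\},
\]
which is exactly the $(2,0)$-four-point inequality.

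For (ii), I would use the freedom to specialize the four points: given any $x,y\in X$, take $z=x$ and $w=y$ in the $(\mu,\delta)$-four-point inequality. The left-hand side becomes $2\,xy$, while $xz+yw=0$ and $xw+yz=2\,xy$, so the inequality reads $2\,xy\le 2\mu\,xy+2\delta$, i.e. $(1-\mu)\,xy\le\delta$ for all $x,y\in X$. If $\mu<1$ this would bound every distance by $\delta/(1-\mu)$, contradicting unboundedness; hence $\mu\ge 1$. For (iii), if $\diam(X,d)=D$ then $xy\le D$ and $zw\le D$ for all points of $X$, so $xy+zw\le 2D=0\cdot\max\{xz+yw,\ xw+yz\}+2D$, which is the $(0,D)$-four-point inequality. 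I do not anticipate any genuine obstacle here; the averaging observation in (i) is the only non-mechanical ingredient.
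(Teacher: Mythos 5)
Your proof is correct and follows essentially the same route as the paper: part (i) sums the same four triangle inequalities (just grouped by diagonal rather than all at once) and then uses $a+b\le 2\max\{a,b\}$, and parts (ii) and (iii) specialize the four points in the same spirit (you take $z=x$, $w=y$ and argue by contradiction where the paper takes $y=z=w$ and passes to a limit, but these are trivially equivalent). No gaps.
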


\begin{proof}
(i).   Let $x, y, z, w \in X$.  Triangle inequality and symmetry of the metric yield:
\[
xy  \le xz+yz, \quad
xy  \le xw+yw, \quad
zw   \le xz+xw, \quad
zw  \le yz+yw.
\]
Adding these four inequalities and dividing by $2$ gives
$xy+zw \le (xz+yw)+(xw+zw)$.
For real numbers $a,b$ we have  $a+b\le 2\max\{a,b\}$ and so
$xy+zw \le 2\max\{xz+yw, \, xw+zw\}$, that is,
the $(2, 0)$-four-point inequality is satisfied. \newline
(ii).  Assume that $X$ is unbounded and satisfies the $(\mu,\delta)$-four-point inequality.
Let $\{x_n\}$ and $\{y_n\}$ be sequences in $X$ such that $x_n y_n \rightarrow \infty$ as $n \rightarrow \infty$.
By the $(\mu,\delta)$-four-point inequality, with $x=x_n$ and $y=z=w=y_n$,
we have
$x_n y_n \leq \mu \, x_n y_n + 2\delta$.
Dividing by $x_n y_n$ and taking the limit as $n \rightarrow \infty$ yields $1 \leq \mu$. \newline
Property (iii) is obvious.
\end{proof}

Given points $x, y, z, w  \in X$, not all identical, define
\begin{equation}\label{eq:Delta}
\Delta(x,y,z,w) =  \frac{xy+zw}{\max\{xz+yw,xw+yz\}}.
\end{equation}
In the introduction, we defined the quadrilateral constant of $(X,d)$ by
\[
C_0(X,d) = \inf\{\mu ~|~  \text{$(X,d)$ satisfies the $(\mu,0)$-four-point inequality}\}.
\]
If $X$ has at least two points then
\begin{equation}\label{eq:Czero}
C_0(X,d) = \sup  \Delta(x,y,z,w) 
\end{equation}
where the supremum is taken over all $x,y,z,w\in X$, not all identical.

We also defined the quasi-hyperbolicity constant of $(X,d)$ by
\begin{align*}\label{eq:qhc}
C(X,d)   = \inf\{\mu ~|~  \mbox{ there exists $\delta \geq 0$}& \mbox{ such that $(X,d)$ satisfies} \\
& \mbox{the $(\mu,\delta)$-four-point inequality}\}. 
\end{align*}


 The quasi-hyperbolicity constant and the quadrilateral constant have the following elementary properties.

\begin{prop}\label{prop:basicstwo} Let $(X,d)$ be a metric space.
\begin{itemize}
\item[(i)]  If  $A \subset X$  and $d_A$ is the subspace metric then $C(A, d_A) \leq C(X,d)$ and  $C_0(A, d_A) \leq C_0(X,d)$.

\item[(ii)]  If $\lambda > 0$ then $C(X, \lambda d) = C(X,d)$ and $C_0(X, \lambda d) = C_0(X,d)$.

\item[(iii)]  $C(X,d) \leq C_0(X,d) \leq 2$.

\item[(iv)] If $(X,d)$ is unbounded then $1 \leq C(X,d)$.

\item[(v)] If $(X,d)$ is bounded then $C(X,d)=0$.

\item[(vi)] If $(X,d)$ has at least two distinct points then $C_0(X,d) \geq 1$.

\item[(vii)] If $\big(X', d' \big)$ is a metric completion of $(X,d)$ then
$C(X,d) = C\big(X', d'  \big)$   and $C_0 (X,d) = C_0\big(X', d'  \big)$.

\end{itemize}
\end{prop}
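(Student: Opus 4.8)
The plan is to obtain each item directly from the definitions of $C(X,d)$ and $C_0(X,d)$ as infima over sets of admissible $\mu$, together with Proposition~\ref{prop:basics}. I first note that, by Proposition~\ref{prop:basics}(i), $\mu=2$ (with $\delta=0$) is always admissible, so both infima are taken over non-empty subsets of $[0,\infty)$ and hence are genuine real numbers; this is what makes the comparisons below meaningful. For (i), the $(\mu,\delta)$-four-point inequality is quantified over \emph{all} quadruples $x,y,z,w$, so if $(X,d)$ satisfies it then so does every subspace $(A,d_A)$, with the same $\delta$; hence every $\mu$ admissible for $X$ is admissible for $A$, and passing to infima gives $C(A,d_A)\le C(X,d)$, and keeping $\delta=0$ throughout gives $C_0(A,d_A)\le C_0(X,d)$. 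For (ii), multiplying every distance by $\lambda$ turns the $(\mu,\delta)$-four-point inequality for $(X,\lambda d)$ into the $(\mu,\delta/\lambda)$-four-point inequality for $(X,d)$; since $\delta\mapsto\delta/\lambda$ is a bijection of $[0,\infty)$ that fixes $0$, the admissible sets defining $C$ (respectively $C_0$) for $(X,\lambda d)$ and for $(X,d)$ coincide. For (iii), any $\mu$ admissible for the $(\mu,0)$-four-point inequality is admissible for the $(\mu,\delta)$-four-point inequality with $\delta=0$, so the set defining $C_0$ is contained in the set defining $C$ and hence $C(X,d)\le C_0(X,d)$; the bound $C_0(X,d)\le 2$ is again Proposition~\ref{prop:basics}(i).

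For (iv), Proposition~\ref{prop:basics}(ii) says every $\mu$ admissible for an unbounded space satisfies $\mu\ge 1$, so $C(X,d)\ge 1$. For (v), Proposition~\ref{prop:basics}(iii) gives that a bounded space of diameter $D$ satisfies the $(0,D)$-four-point inequality, so $\mu=0$ is admissible for $C$ and therefore $C(X,d)=0$. For (vi), pick distinct points $x\ne y$ and test the $(\mu,0)$-four-point inequality on the degenerate quadruple $(x,y,x,y)$: its left-hand side equals $2\,d(x,y)$ and the maximum on the right equals $2\,d(x,y)$, so admissibility forces $\mu\ge 1$ upon dividing by $2\,d(x,y)>0$; hence $C_0(X,d)\ge 1$ (equivalently, $\Delta(x,y,x,y)=1$ in \eqref{eq:Czero}).

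The only item requiring real work is (vii), and I would prove the sharper statement that for every $\mu\ge 0$ and every $\delta\ge 0$ --- including $\delta=0$ --- the space $(X,d)$ satisfies the $(\mu,\delta)$-four-point inequality \emph{if and only if} its completion $(X',d')$ does. The two constants then agree because each is an infimum over the same admissible set (over all $\delta$ for $C$, and with $\delta=0$ fixed for $C_0$). One implication is the restriction argument of (i), since $X$ sits isometrically in $X'$. For the converse, given $x',y',z',w'\in X'$ I would choose sequences in the dense subspace $X$ converging to these points, apply the $(\mu,\delta)$-four-point inequality to the approximating quadruples, and let the index tend to infinity. The main obstacle --- such as it is --- lies here: one must check that the six relevant pairwise distances converge (continuity of $d'$) and that $(a,b)\mapsto\max\{a,b\}$ is continuous, so that the right-hand side converges to $\mu\max\{x'z'+y'w',\,x'w'+y'z'\}+2\delta$ with the \emph{same} constant $\delta$; in particular $\delta=0$ is preserved in the limit, which is exactly what makes the statement apply to $C_0$ as well as to $C$. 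No genuine difficulty arises; the two points to be careful about throughout are to keep the admissible-$\mu$ sets non-empty, as noted at the outset, and, in (i) and (vii), to argue with these sets rather than with the supremum formula \eqref{eq:Czero}, which presupposes that $X$ has at least two points.
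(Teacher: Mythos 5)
Your proposal is correct and follows essentially the same route as the paper: each item is read off from the definitions and from Proposition~\ref{prop:basics}, with (vi) verified on a degenerate quadruple (you use $(x,y,x,y)$ where the paper uses $(x_0,y_0,y_0,y_0)$, both giving $\Delta=1$) and (vii) reduced to the equivalence of the $(\mu,\delta)$-four-point inequality for a space and its completion. Your limit argument for (vii) merely spells out what the paper dismisses as ``straightforward,'' and it is sound.
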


\begin{proof}
Property (i) and the inequality $C(X,d) \leq C_0(X,d)$ are clear from the definitions of $C(X,d)$ and $C_0(X,d)$. 
Note that for $\lambda >0$, $(X,d)$ satisfies the $(\mu,\delta)$-four-point inequality if and only if $(X,\lambda d)$ satisfies the $(\mu,\lambda\delta)$-four-point inequality.
This implies (ii).
The inequality $C_0(X,d) \leq 2$ in (iii)
is a consequence of Proposition~\ref{prop:basics}(i);
(iv) follows from Proposition~\ref{prop:basics}(ii);
and (v) follows from Proposition~\ref{prop:basics}(iii).
If $x_0, y_0$ are distinct points in $X$
then $\Delta(x_0,y_0,y_0,y_0) = 1$, see~\eqref{eq:Delta},
and so $C_0(X,d) \geq 1$ by ~\eqref{eq:Czero}.
It is straightforward that
a metric space $(X,d)$ satisfies the $(\mu,\delta)$-four-point inequality if and only if a metric completion of $(X,d)$ satisfies the
$(\mu,\delta)$-four-point inequality.
This implies (vii).
\end{proof}

\begin{prop}\label{prop:hyperbolic} Let $(X,d)$ be a metric space.
\begin{itemize}
\item[(i)]  If $(X,d)$ unbounded and Gromov hyperbolic then $C(X,d) =1$.

\item[(ii)]  If $(X,d)$ has at least two points then it is $0$-hyperbolic if and only if $C_0(X,d) =1$. 
\end{itemize}
\end{prop}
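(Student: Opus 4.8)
The plan is to deduce both parts from the elementary facts already established, using the fact that $\delta$-hyperbolicity is literally the special case $\mu=1$ of the $(\mu,\delta)$-four-point inequality.

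For part (i), recall from the introduction that $(X,d)$ is $\delta$-hyperbolic if and only if it satisfies the four-point inequality $xy+zw\le\max\{xz+yw,\,yz+xw\}+2\delta$, which is exactly the $(1,\delta)$-four-point inequality. So a Gromov hyperbolic space satisfies the $(1,\delta)$-four-point inequality for some $\delta\ge 0$, and hence $C(X,d)\le 1$ directly from the definition of $C(X,d)$. Since $(X,d)$ is unbounded, Proposition~\ref{prop:basicstwo}(iv) gives $C(X,d)\ge 1$, so $C(X,d)=1$.

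For the forward implication of part (ii), the same observation shows that if $(X,d)$ is $0$-hyperbolic then it satisfies the $(1,0)$-four-point inequality, so $C_0(X,d)\le 1$; combined with Proposition~\ref{prop:basicstwo}(vi), which applies since $X$ has at least two points, this gives $C_0(X,d)=1$. For the converse, suppose $C_0(X,d)=1$. I would invoke the characterization \eqref{eq:Czero}, namely $C_0(X,d)=\sup\Delta(x,y,z,w)$ over all $x,y,z,w\in X$ not all identical, to conclude that $\Delta(x,y,z,w)\le 1$ for every such quadruple, i.e. $xy+zw\le\max\{xz+yw,\,xw+yz\}$; for a quadruple of identical points the inequality is the trivial $0\le 0$. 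Thus $(X,d)$ satisfies the $(1,0)$-four-point inequality, that is, it is $0$-hyperbolic.

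The only point needing a moment's care — and the closest thing to an obstacle in an otherwise routine argument — is the passage from $\Delta(x,y,z,w)\le 1$ back to the additive inequality, which tacitly requires the denominator $\max\{xz+yw,\,xw+yz\}$ to be positive whenever the four points are not all identical; this is immediate, since if that maximum vanished we would have $xz=yw=xw=yz=0$, forcing $x=y=z=w$. One can equivalently bypass \eqref{eq:Czero}: the set of $\mu$ for which the $(\mu,0)$-four-point inequality holds is upward closed, so $C_0(X,d)=1$ yields the $(1+\varepsilon,0)$-four-point inequality for every $\varepsilon>0$, and letting $\varepsilon\to 0$ gives the $(1,0)$-four-point inequality. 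Either route completes the proof with no genuine difficulty.
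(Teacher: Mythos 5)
Your proof is correct and follows essentially the same route as the paper: part (i) combines the definition of Gromov hyperbolicity as the $(1,\delta)$-four-point inequality with Proposition~\ref{prop:basicstwo}(iv), and part (ii) uses the $(1,0)$-four-point inequality together with Proposition~\ref{prop:basicstwo}(vi) for the forward direction and the characterization \eqref{eq:Czero} for the converse. The extra remark about the denominator in $\Delta$ being positive is a fine (if minor) point of care that the paper leaves implicit.
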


\begin{proof}
(i).  By Proposition~\ref{prop:basicstwo}(iv), $C(X,d) \geq 1$.
Since, by definition, a Gromov hyperbolic space satisfies a $(1,\delta)$-four-point inequality for some $\delta \geq 0$ we have  $C(X,d) \leq 1$.
Hence $C(X,d) =1$.\newline
\noindent (ii).  If $(X,d)$ is $0$-hyperbolic then it satisfies the $(1,0)$-four-point inequality and so $C_0(X,d) \leq  1$.
By Proposition~\ref{prop:basicstwo}(vi), $C_0(X,d) \geq 1$. Hence $C_0(X,d) = 1$. 
If $C_0(X,d) =1$  then for every $x, y, z, w \in X$, not  all identical, $\Delta(x,y,z,w) \leq 1$ and so  $(X,d)$
satisfies the $(1,0)$-four-point inequality, that is, $(X,d)$ is $0$-hyperbolic.
\end{proof}

Without additional hypotheses,
the converse of Proposition~\ref{prop:hyperbolic}(i) need not be true,
in \S~\ref{subsec:graph_alpha} we give examples of unbounded metric spaces with $C(X,d)=1$ that are not Gromov hyperbolic
(also see  Question~\ref{ques:ishyperbolic} and Proposition~\ref{prop:quasiconstantoneCATzero}).

\begin{defi}\label{def:4pointscalable}
We say that a metric space $(X,d)$ is {\it four-point scalable in the large}, abbreviated as {\it scalable},
if  for every $x_1, x_2, x_3, x_4 \in X$ and for every $\lambda \geq 0$
there exists $x'_1, x'_2, x'_3, x'_4 \in X$ and $\Lambda \geq \lambda$ such that
$d(x'_i, x'_j) = \Lambda \,d(x_i, x_j)$ for $1 \leq i,j \leq 4$.
\end{defi}

\begin{example}\label{ex:4pointscalable}
Let $V$ be a real vector space with a given norm $\| \cdot \|$.
The norm determines a metric on $V$ given by $d(x,y) = \| x -y \|$.
For any $0 < \alpha \leq 1$ the function $d^\alpha$ is also metric on $V$.
The metric space $(V, d^\alpha)$ is called the {\it $\alpha$-snowflake} of $(V,d)$.
Note that
$d^\alpha (\lambda x, \lambda y) = \lambda^\alpha d^\alpha (x,y)$ for any $\lambda >0$
from which it easily follows that $(V,d^\alpha)$ is scalable. 
Let $S \subset V$ be a nonempty subset such that $\lambda x \in S$ for all $\lambda > 0$ and all $x \in S$.
Then $S$, viewed as a metric subspace of $(V,d^\alpha)$, is also scalable. 
\end{example}

\begin{prop}\label{prop:4pointscalable}
If  $(X,d)$ is scalable 
then $C(X,d)  = C_0(X,d)$.  
\end{prop}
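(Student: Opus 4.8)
The plan is to show the two inequalities $C_0(X,d) \le C(X,d)$ and $C(X,d) \le C_0(X,d)$; the second is already known from Proposition \ref{prop:basicstwo}(iii), so the real content is proving $C_0(X,d) \le C(X,d)$. If $X$ has at most one point both constants are zero, so assume $X$ has at least two points. It suffices to show that for every $\mu$ for which there exists some $\delta \ge 0$ with $(X,d)$ satisfying the $(\mu,\delta)$-four-point inequality, we in fact have $C_0(X,d) \le \mu$; taking the infimum over such $\mu$ then gives $C_0(X,d) \le C(X,d)$.

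So fix such a $\mu$ and $\delta$. Using the characterization \eqref{eq:Czero}, it is enough to show $\Delta(x_1,x_2,x_3,x_4) \le \mu$ for every quadruple of points $x_1,x_2,x_3,x_4 \in X$, not all identical, where $\Delta$ is as in \eqref{eq:Delta}. Given such a quadruple, I would apply the four-point scalability hypothesis: for each $\lambda \ge 0$ there are points $x_1',x_2',x_3',x_4' \in X$ and a scale factor $\Lambda = \Lambda(\lambda) \ge \lambda$ with $d(x_i',x_j') = \Lambda\, d(x_i,x_j)$ for all $i,j$. Since $(X,d)$ satisfies the $(\mu,\delta)$-four-point inequality applied to $x_1',x_2',x_3',x_4'$, we get (matching the pairing used in $\Delta$, namely $x_1'x_2' + x_3'x_4'$ against $\max\{x_1'x_3'+x_2'x_4',\, x_1'x_4'+x_2'x_3'\}$)
\[
\Lambda\big(x_1x_2 + x_3x_4\big) \le \mu\,\Lambda\,\max\{x_1x_3+x_2x_4,\, x_1x_4+x_2x_3\} + 2\delta.
\]
Note that since not all of $x_1,\dots,x_4$ coincide, the denominator $\max\{x_1x_3+x_2x_4,\, x_1x_4+x_2x_3\}$ is strictly positive (if it were zero, triangle inequality would force all pairwise distances among the four points to vanish). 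Dividing by $\Lambda\,\max\{x_1x_3+x_2x_4,\, x_1x_4+x_2x_3\}$ yields
\[
\Delta(x_1,x_2,x_3,x_4) \le \mu + \frac{2\delta}{\Lambda\,\max\{x_1x_3+x_2x_4,\, x_1x_4+x_2x_3\}}.
\]
Now let $\lambda \to \infty$, so $\Lambda(\lambda) \to \infty$; the error term tends to $0$, giving $\Delta(x_1,x_2,x_3,x_4) \le \mu$. Since the quadruple was arbitrary, $C_0(X,d) \le \mu$ by \eqref{eq:Czero}, and taking the infimum over admissible $\mu$ completes the proof.

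The one point that needs a little care — and which I regard as the main (mild) obstacle — is bookkeeping the pairing in the four-point inequality: the $(\mu,\delta)$-four-point inequality bounds $ab+cd$ by $\mu\max\{ac+bd,\,ad+bc\}+2\delta$, and $\Delta$ is defined with a specific grouping of the three pair-sums into ``one versus the max of the other two.'' Because the four-point inequality is symmetric under all relabelings of the four points, I can always arrange the labels of $x_1',\dots,x_4'$ so that the pair $\{x_1',x_2'\}$ plays the role of the single term and the other two pairings appear inside the max, matching $\Delta$ exactly; the scalability hypothesis is stated for an unordered relabeling (it gives the full distance matrix scaled), so this causes no trouble. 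Everything else is the routine limiting argument above.
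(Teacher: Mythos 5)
Your argument is correct and is essentially the paper's proof: scale the quadruple up by $\Lambda$ using the hypothesis, apply the $(\mu,\delta)$-four-point inequality to the scaled points, divide by $\Lambda$, and let $\Lambda\to\infty$ to kill the additive term. The only cosmetic difference is that you phrase the conclusion via $\Delta$ and \eqref{eq:Czero} rather than directly as the $(\mu,0)$-four-point inequality; the substance is identical.
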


\begin{proof}
It suffices to show that if $(X,d)$ satisfies the $(\mu,\delta)$-four-point inequality for a particular $(\mu,\delta)$  then
it also satisfies the $(\mu,0)$-four-point inequality.
Assume that $(X,d)$ satisfies the $(\mu,\delta)$-four-point inequality for some $\mu\ge 1$ and $\delta\ge 0$. Let $x_1,x_2,x_3,x_4\in X$. 
For each $\lambda\ge 0$,  let $\Lambda\ge\lambda$ and $x'_i\in X$ be such that $x'_ix'_j=\Lambda \, x_ix_j$, $1\le i,j\le 4$.
Note that the $(\mu,\delta)$-four-point inequality for the points $\{x'_i\}$ implies the $(\mu,\delta/\Lambda)$-four-point inequality for $\{x_i\}$.
Since $\Lambda$ can be chosen to be arbitrarily large, it follows that
$\{x_i\}$ satisfies the $(\mu,0)$-four-point inequality.
\end{proof}

\begin{cor} \label{cor:scalable}
Let $V$ be a real vector space with a given norm $\| \cdot \|$ and corresponding
metric, $d(x,y) = \| x -y \|$. 
Let $S \subset V$ be a nonempty subset such that $\lambda x \in S$ for all $\lambda > 0$ and all $x \in S$.
Then for all $0 < \alpha \leq 1$, $C(S, d^\alpha)  = C_0(S, d^\alpha)$.  
\end{cor}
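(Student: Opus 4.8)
The plan is to deduce the corollary directly from Proposition~\ref{prop:4pointscalable}, which asserts that $C(X,d) = C_0(X,d)$ for any metric space $(X,d)$ that is four-point scalable in the large. Thus the whole content of the statement is the observation, already recorded in Example~\ref{ex:4pointscalable}, that the metric subspace $(S, d^\alpha)$ of $(V, d^\alpha)$ is four-point scalable in the large. I would organize the argument in three short steps: (1) note that $d^\alpha$ restricts to a genuine metric on $S$; (2) verify the scaling condition of Definition~\ref{def:4pointscalable} for $(S, d^\alpha)$; (3) apply Proposition~\ref{prop:4pointscalable}.

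For step (2), fix $x_1, x_2, x_3, x_4 \in S$ and $\lambda \geq 0$. For any $t > 0$, homogeneity of the norm gives $d(t x_i, t x_j) = \| t x_i - t x_j \| = t\, d(x_i, x_j)$, hence
\[
d^\alpha(t x_i, t x_j) = t^\alpha\, d^\alpha(x_i, x_j), \qquad 1 \le i, j \le 4.
\]
Since $S$ is closed under dilation by positive scalars, the points $x'_i := t x_i$ lie in $S$. Setting $\Lambda := t^\alpha$ and choosing $t$ large enough that $t^\alpha \geq \lambda$ (possible because $\alpha > 0$) yields $\Lambda \geq \lambda$ together with $d^\alpha(x'_i, x'_j) = \Lambda\, d^\alpha(x_i, x_j)$ for all $i,j$. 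This is exactly the defining condition of Definition~\ref{def:4pointscalable} for the metric space $(S, d^\alpha)$, so Proposition~\ref{prop:4pointscalable} applies and gives $C(S, d^\alpha) = C_0(S, d^\alpha)$ for every $0 < \alpha \leq 1$.

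I expect no real obstacle here; the corollary is essentially a repackaging of Example~\ref{ex:4pointscalable} and Proposition~\ref{prop:4pointscalable}. The only points deserving a word are that $d^\alpha$ is genuinely a metric on $V$ for $0 < \alpha \leq 1$ — because $t \mapsto t^\alpha$ is increasing, concave, and vanishes at the origin, hence subadditive, so the triangle inequality survives — and that the argument uses only positive dilations, so the hypothesis ``$\lambda x \in S$ for all $\lambda > 0$'' (which need not force $0 \in S$) is precisely what is required. The degenerate case $S = \{0\}$ is trivial, since then $C(S, d^\alpha) = C_0(S, d^\alpha) = 0$ by Proposition~\ref{prop:basicstwo}.
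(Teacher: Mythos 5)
Your proposal is correct and follows exactly the paper's route: the paper's proof simply cites Example~\ref{ex:4pointscalable} for the fact that $(S,d^\alpha)$ is four-point scalable in the large and then invokes Proposition~\ref{prop:4pointscalable}, and your step (2) is precisely the verification recorded in that example. No issues.
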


\begin{proof}
From Example~\ref{ex:4pointscalable}, $(S,d^\alpha)$ is scalable 
and so the conclusion follows from Proposition~\ref{prop:4pointscalable}.
\end{proof}

\begin{example}[Hyperbolic space]\label{ex:hyperbolicspace}
Let $n >1$ be an integer and let
$(H^n, d_H)$ denote $n$-dimensional real hyperbolic space.
For this space,
$C(H^n, d_H) = 1  <  \sqrt{2} = C_0(H^n, d_H)$
and so
Proposition~\ref{prop:4pointscalable} implies $(H^n, d_H)$ is {\it not} scalable.  
The space
$(H^n, d_H)$ is Gromov hyperbolic and unbounded, hence
$C(H^n, d_H) =1$ by Proposition~\ref{prop:hyperbolic}(i).
Since $H^n$ has negative sectional curvature as a Riemannian manifold,
$C_0(H^n, d_H) = \sqrt{2}$ by Corollary~\ref{cor:nonnegsectional}.
\end{example}

\begin{defi}\label{def:quasiembedding}
Let $C_1, C_2 >0$ and $L_1,L_2 \geq 0$.
A map $f \colon X \rightarrow Y$ between metric spaces $(X,d_X)$ and $(Y,d_Y)$ is a
{\it $((C_1,L_1), (C_2,L_2))$-quasi-isometric embedding} if for all $u,v \in X$,  
\[
C_1d_X(u,v) - L_1 ~\leq~ d_Y(f(u),f(v)) ~\leq~ C_2d_X(u,v)  + L_2.
\]
The ratio $C_2/C_1$ is called the {\it distortion parameter} and $\max\{L_1/C_1, L_2/C_1\}$ is called the {\it roughness parameter}.

Some useful special cases of this definition include:
\begin{itemize}

\item[(i)]  A $((C_1,0), (C_2,0))$-quasi-isometric embedding $f \colon X \rightarrow Y$ is also known as
               a {\it $(C_1,C_2)$-bilipschitz embedding}.  

\item[(ii)]  A $((1,k),(1,k))$-quasi-isometric embedding $f \colon X \rightarrow Y$ is also known as
               a   {\it $k$-rough isometric embedding}. 
               This condition is equivalent to: for all $u,v\in X$,   $| d_Y(f(u),f(v)) - d_X(u,v) | \leq k$.          
\end{itemize}
\end{defi}

\begin{lemma}\label{lem:quasi_bound}
If $f \colon X \rightarrow Y$ is a $((C_1,L_1), (C_2,L_2))$-quasi-isometric embedding  between metric spaces and
$(Y,d_Y)$ satisfies the $(\mu,\delta)$-four-point inequality for some $(\mu,\delta)$ then
$(X,d_X)$ satisfies the $\left(\tfrac{C_2}{C_1}\mu, \, \tfrac{1}{C_1} (\mu L_2 + L_1 + \delta)\right)$-four-point inequality.
\end{lemma}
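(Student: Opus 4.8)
The plan is to unwind the definition of a quasi-isometric embedding and substitute the resulting bounds directly into the hypothesis that $(Y,d_Y)$ satisfies the $(\mu,\delta)$-four-point inequality. Fix $x,y,z,w \in X$. Writing the four-point inequality for the images $f(x), f(y), f(z), f(w)$ in $Y$, we have
\[
d_Y(f(x),f(y)) + d_Y(f(z),f(w)) \le \mu \max\{d_Y(f(x),f(z)) + d_Y(f(y),f(w)), \ d_Y(f(x),f(w)) + d_Y(f(y),f(z))\} + 2\delta.
\]
I would then bound the left-hand side from below by applying the lower estimate $d_Y(f(u),f(v)) \ge C_1 d_X(u,v) - L_1$ to each of its two terms, and bound the right-hand side from above by applying the upper estimate $d_Y(f(u),f(v)) \le C_2 d_X(u,v) + L_2$ to each of the (four) terms appearing inside the maximum.

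The only mild subtlety is handling the $\max$ on the right-hand side: since $\max\{a+c,\,b+c'\} \le \max\{a,b\} + \max\{c,c'\}$ is not quite what is needed, I would instead note that for each of the two arguments of the max, say $d_Y(f(x),f(z)) + d_Y(f(y),f(w))$, the upper bound gives $\le C_2(d_X(x,z)+d_X(y,w)) + 2L_2$; taking the max over the two arguments and pulling out the common additive constant $2L_2$ yields
\[
\max\{\cdots\} \le C_2 \max\{d_X(x,z)+d_X(y,w),\ d_X(x,w)+d_X(y,z)\} + 2L_2,
\]
since $\max$ is monotone and commutes with adding a constant. Combining, the displayed inequality becomes
\[
C_1\big(d_X(x,y)+d_X(z,w)\big) - 2L_1 \le \mu\Big(C_2 \max\{d_X(x,z)+d_X(y,w),\ d_X(x,w)+d_X(y,z)\} + 2L_2\Big) + 2\delta.
\]

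Finally I would move the $-2L_1$ to the right side and divide through by $C_1 > 0$, obtaining
\[
d_X(x,y)+d_X(z,w) \le \frac{C_2}{C_1}\,\mu \, \max\{d_X(x,z)+d_X(y,w),\ d_X(x,w)+d_X(y,z)\} + \frac{2}{C_1}\big(\mu L_2 + L_1 + \delta\big),
\]
which is exactly the $\big(\tfrac{C_2}{C_1}\mu, \ \tfrac{1}{C_1}(\mu L_2 + L_1 + \delta)\big)$-four-point inequality for $(X,d_X)$. Since $x,y,z,w$ were arbitrary, this completes the argument. There is no real obstacle here; the whole proof is bookkeeping, and the one point requiring a moment's care is the interchange of $\max$ with the affine substitution, which I would state explicitly as above.
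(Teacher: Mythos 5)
Your proof is correct and is essentially the same computation as the paper's: both apply the four-point inequality in $Y$ to the images, use the lower bound of the quasi-isometric embedding on the left-hand side and the upper bound inside the maximum, pull the additive constant $2L_2$ out of the max, and divide by $C_1$. The only difference is cosmetic (you start from the inequality in $Y$ and rearrange, while the paper starts from $d_X(x,y)+d_X(z,w)$ and chains the estimates), so no further comment is needed.
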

 
\begin{proof}
Let $x,y,z,w\in X$ and let $\bar{x},\bar{y},\bar{z},\bar{w}\in Y$ be their respective images under $f\colon X\to Y$. Then 
\begin{align*}
& d_X(x,y) +d_X(z,w) \\
              & \le \tfrac{1}{C_1}\big(d_Y(\bar{x},\bar{y})+d_Y(\bar{z},\bar{w})\big)+\tfrac{2L_1}{C_1}\\
 & \le \tfrac{1}{C_1}\big(\mu\max\left\{d_Y(\bar{x},\bar{z})+d_Y(\bar{y},\bar{w}), \, d_Y(\bar{x},\bar{w})+d_Y(\bar{y},\bar{z})\right\}+2\delta\big)+\tfrac{2L_1}{C1}\\
 & \le \tfrac{1}{C_1}\mu\max\left\{C_2\big(d_X(x,z)+d_X(y,w)\big)+2L_2, \, C_2\big(d_X(x,w)+d_X(y,z)\big)+2L_2\right\}+\tfrac{2\delta}{C_1}+\tfrac{2L_1}{C_1}\\
 & = \tfrac{C_2}{C_1}\mu\max\left\{d_X(x,z)+d_X(y,w),\, d_X(x,w)+d_X(y,z)\right\} + \tfrac{2\mu L_2}{C_1}+\tfrac{2\delta}{C_1}+\tfrac{2L_1}{C_1}
\end{align*}
which shows that $(X,d)$ satisfies the $\left(\tfrac{C_2}{C_1}\mu, \, \tfrac{1}{C_1} (\mu L_2 + L_1+\delta)\right)$-four-point inequality.
\end{proof}

Lemma~\ref{lem:quasi_bound} has the following immediate consequence.

\begin{prop}\label{prop:quasi_bound} 
Let $f \colon X \rightarrow Y$ be a map between metric spaces $(X,d_X)$ and $(Y,d_Y)$. 
\begin{itemize}
\item[(i)]  If $f$ is a $((C_1,L_1), (C_2,L_2))$-quasi-isometric embedding then 
              \[C(X,d_X) \leq (C_2 /C_1) \, C(Y,d_Y). \phantom{\qed}\]

\item[(ii)] If $f$ is a $(C_1,C_2)$-bilipschitz embedding then
              \[C_0(X,d_X) \leq (C_2 /C_1) \, C_0(Y,d_Y). \qed \] 
\end{itemize}
\end{prop}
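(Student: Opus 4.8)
The plan is to derive both statements directly from Lemma \ref{lem:quasi_bound}, which already does all the analytic work. For part (i), suppose $f\colon X\to Y$ is a $((C_1,L_1),(C_2,L_2))$-quasi-isometric embedding. If $C(Y,d_Y)=2$ there is nothing to prove beyond Proposition \ref{prop:basicstwo}(iii), so assume we are given any $\mu > C(Y,d_Y)$; by definition of $C(Y,d_Y)$ as an infimum, there exists $\delta\geq 0$ such that $(Y,d_Y)$ satisfies the $(\mu,\delta)$-four-point inequality. Lemma \ref{lem:quasi_bound} then tells us that $(X,d_X)$ satisfies the $\bigl(\tfrac{C_2}{C_1}\mu,\ \tfrac{1}{C_1}(\mu L_2+L_1+\delta)\bigr)$-four-point inequality, and since the second parameter is a legitimate non-negative $\delta'$, this exhibits $(X,d_X)$ as satisfying a $(\tfrac{C_2}{C_1}\mu,\delta')$-four-point inequality. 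Hence $C(X,d_X)\leq \tfrac{C_2}{C_1}\mu$ for every $\mu > C(Y,d_Y)$. Letting $\mu\downarrow C(Y,d_Y)$ gives $C(X,d_X)\leq \tfrac{C_2}{C_1}C(Y,d_Y)$.

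For part (ii) the argument is the same but with $L_1=L_2=0$ and $\delta=0$ throughout. A $(C_1,C_2)$-bilipschitz embedding is a $((C_1,0),(C_2,0))$-quasi-isometric embedding by Definition \ref{def:quasiembedding}(i). Given $\mu > C_0(Y,d_Y)$, the space $(Y,d_Y)$ satisfies the $(\mu,0)$-four-point inequality, so Lemma \ref{lem:quasi_bound} (with all the $L_i$ and $\delta$ equal to zero) shows $(X,d_X)$ satisfies the $\bigl(\tfrac{C_2}{C_1}\mu,\,0\bigr)$-four-point inequality, whence $C_0(X,d_X)\leq \tfrac{C_2}{C_1}\mu$. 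Letting $\mu\downarrow C_0(Y,d_Y)$ completes the proof.

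I do not expect a genuine obstacle here, since the proposition is explicitly flagged as "an immediate consequence" of the lemma. The only point requiring a little care is the passage through the infimum: one should phrase things in terms of an arbitrary $\mu$ strictly larger than $C(Y,d_Y)$ (respectively $C_0(Y,d_Y)$), apply the lemma to obtain the corresponding inequality for $X$ with some admissible $\delta'\geq 0$, and only then take the infimum. Equivalently, one can note that the map $\mu\mapsto \tfrac{C_2}{C_1}\mu$ is continuous and monotone, so the infimum over the shifted set of admissible $\mu$-values transforms correctly. Should the paper prefer brevity, the entire argument can be compressed to a single sentence observing that Lemma \ref{lem:quasi_bound} maps the set $\{(\mu,\delta): (Y,d_Y)\text{ satisfies the }(\mu,\delta)\text{-four-point inequality}\}$ into the corresponding set for $(X,d_X)$ via $(\mu,\delta)\mapsto\bigl(\tfrac{C_2}{C_1}\mu,\tfrac{1}{C_1}(\mu L_2+L_1+\delta)\bigr)$, and that taking infima of the first coordinates yields the claimed bound.
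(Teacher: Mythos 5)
Your proof is correct and is exactly the argument the paper has in mind: the paper marks the proposition as an immediate consequence of Lemma \ref{lem:quasi_bound} and omits the details, which are precisely the infimum-passing steps you supply (take $\mu$ strictly above $C(Y,d_Y)$ or $C_0(Y,d_Y)$, apply the lemma, and let $\mu$ decrease to the infimum). The only cosmetic quibble is that your special-casing of $C(Y,d_Y)=2$ is unnecessary, since the general argument with arbitrary $\mu>C(Y,d_Y)$ already covers it.
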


A map $f \colon X \rightarrow Y$ between metric spaces $(X,d_X)$ and $(Y,d_Y)$ is a 
{\it rough isometry}  if it is a $k$-rough  isometric embedding for some $k \geq 0$ and there exists $R > 0$
such that $f(X)$ is $R$-dense in $Y$, that is, for every $y \in Y$ there exists $x \in X$
such that $d_Y(f(x),y) < R$.
Two metric spaces are {\it  roughly isometric} if there exists a rough isometry between them.
Note that rough isometry is a generally a stronger condition than {\it quasi-isometry}.
Recall that $f$ is a quasi-isometry if it is a
$((C_1,L_1), (C_2,L_2))$-quasi-isometric embedding for some $(C_1, L_1), (C_2, L_2)$
and also $f(X)$ is $R$-dense for some~$R$.

\begin{cor} \label{cor:roughiso}
If $(X,d_X)$ and $(Y,d_Y)$ are roughly isometric then $C(X,d_X) = C(Y,d_Y)$.
\end{cor}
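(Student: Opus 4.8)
The plan is to deduce Corollary \ref{cor:roughiso} directly from Proposition \ref{prop:quasi_bound}(i) by applying it in both directions along a rough isometry and its quasi-inverse. Since a rough isometry $f \colon X \to Y$ is by definition a $k$-rough isometric embedding for some $k \geq 0$, it is in particular a $((1,k),(1,k))$-quasi-isometric embedding, so Proposition \ref{prop:quasi_bound}(i) with $C_1 = C_2 = 1$ gives immediately that $C(X,d_X) \leq C(Y,d_Y)$. The only real content is to produce a rough isometry (or at least a rough isometric embedding) going back from $Y$ to $X$, so that the reverse inequality $C(Y,d_Y) \leq C(X,d_X)$ follows the same way.

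First I would construct a quasi-inverse $g \colon Y \to X$ as follows: using that $f(X)$ is $R$-dense in $Y$, for each $y \in Y$ pick some $x = g(y) \in X$ with $d_Y(f(g(y)), y) < R$. Then for $y, y' \in Y$, writing $x = g(y)$, $x' = g(y')$, the triangle inequality gives
\[
| d_X(x,x') - d_Y(y,y') | \leq | d_X(x,x') - d_Y(f(x),f(x')) | + | d_Y(f(x),f(x')) - d_Y(y,y') |,
\]
and the first term is at most $k$ since $f$ is a $k$-rough isometric embedding, while the second term is at most $d_Y(f(x),y) + d_Y(f(x'),y') < 2R$. Hence $g$ is a $(k + 2R)$-rough isometric embedding of $Y$ into $X$, so it is a $((1, k+2R),(1,k+2R))$-quasi-isometric embedding, and Proposition \ref{prop:quasi_bound}(i) applied to $g$ yields $C(Y,d_Y) \leq C(X,d_X)$. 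Combining the two inequalities gives $C(X,d_X) = C(Y,d_Y)$.

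There is essentially no obstacle here; the statement is a soft consequence of the already-established quasi-isometry estimate. The only point requiring a small amount of care is the construction of the quasi-inverse via a choice function (an appeal to the axiom of choice, or just a remark that any such selection works), and the bookkeeping that the additive constants $k$ and $R$ combine correctly — but since the multiplicative constants are both $1$ in each direction, the factor $C_2/C_1$ in Proposition \ref{prop:quasi_bound}(i) is $1$ both ways, and no further estimation is needed. One could alternatively phrase the whole argument symmetrically by noting that rough isometry is an equivalence relation and $C$ is monotone under $1$-bilipschitz-type quasi-isometric embeddings, but the direct two-sided application above is the cleanest route.
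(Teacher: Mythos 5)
Your proof is correct and follows essentially the same route as the paper's: apply Proposition \ref{prop:quasi_bound}(i) to $f$ for one inequality, then build the quasi-inverse $g$ from the $R$-density of $f(X)$, verify it is a $(k+2R)$-rough isometric embedding by the same triangle-inequality bookkeeping, and apply the proposition again for the reverse inequality. No gaps.
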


\begin{proof}
It is well-known that if $f \colon X \rightarrow Y$  is a rough isometry then there exists a rough isometry $g \colon Y \rightarrow X$.
Applying Proposition~\ref{prop:quasi_bound}(i) to both $f$ and $g$ yields the conclusion of the Corollary.
For the convenience of the reader, we include a proof of the existence of $g$.
Let $f \colon X \rightarrow Y$ be a $k$-rough isometric embedding such that $f(X)$ is $R$-dense in $Y$.
Define $g \colon Y \rightarrow X$ as follows.
For each $y \in Y$ we can choose $x \in X$ such that $d_Y(f(x), y) < R$ and declare $g(y) = x$.
Observe that for all $y \in Y$, $d_Y(f(g(y)), y) < R$.
For all $u,v \in Y$, $|d_Y(f(g(u)), f(g(v))) - d_X(g(u),g(v))| \leq k$.
Hence, for all $u,v \in Y$, $|d_Y(u, v) - d_X(g(u),g(v))| \leq k + 2R$
and so  $g$ is a $(k + 2R)$-rough embedding.
\end{proof}



\section{Two families of examples}\label{sec:basic_examples}

In \S~\ref{subsec:graph_V}, we exhibit spaces that
are quasi-isometric to the Euclidean line
yet with quasi-hyperbolicity constants that are greater than one
and, consequently, are not Gromov hyperbolic.
In  \S~\ref{subsec:graph_alpha},
we give examples of metric spaces whose quasi-hyperbolicity constants are equal to one, yet are not Gromov hyperbolic.
However, these are examples are not roughly geodesic.
We show,
using Bridson's ``Flat Plane Theorem'', 
that a proper $\CAT(0)$-space whose quasi-hyperbolicity constant is equal to one is necessarily 
Gromov hyperbolic,
see Proposition~\ref{prop:quasiconstantoneCATzero}.


\subsection{The graph of $y=m|x|$ in the Euclidean plane}\label{subsec:graph_V}  
Let $m \geq 0$.  Consider the space $X_m=\{(x,y)\in\IR^2\mid y=m\,|x|\}$ as a subspace of the Euclidean plane.
The metric on $X_m$ is given by
\[
d_E\left((u,m\,|u|),(v,m\,|v|)\right)=\left[(u-v)^2+m^2(|u|-|v|)^2\right]^{1/2}.
\]
Let $(\IR,d_E)$ be the Euclidean line,  $d_E(u,v)=|u-v|$. 
Let  $p\colon X_m\to \IR$ be projection to the first coordinate, that is,  $p(x,y)=x$.
For $u, v \in \IR$,  $|\,|u|-|v|\,| \leq |u-v|$, and so, for $u \neq v$,
\[
\left[(u-v)^2+m^2(|u|-|v|)^2\right]^{1/2}  = |u-v| \left[1+m^2\left(\frac{|u|-|v|}{u-v}\right)^2\right]^{1/2}\le (m^2+1)^{1/2}\,|u-v|,   
\]
and thus for all $u,v$
\[
(m^2+1)^{-1/2} d_E\left((u,m\,|u|),(v,m\,|v|)\right) \le |u-v|\le d_E\left((u,m\,|u|),(v,m\,|v|)\right).   
\]
Hence $p$ is a $((m^2+1)^{-1/2} ,1)$-bilipschitz embedding of $X_m$ into $\IR$.
Since $p$ is surjective, it is also a bilipschitz homeomorphism.
In particular, $(X_m, d_E)$ and $(\IR,d_E)$ are quasi-isometric.

Note that, 
since $(\IR,d_E)$ is $0$-hyperbolic, we have $C(\IR,d_E) =C_0(\IR,d_E)=1$ by Proposition~\ref{prop:hyperbolic}.

For $m >0$,
let $\mu_m = \frac{\sqrt{m^2+1}+1}{\sqrt{m^2+1}-1}$. 
A straightforward calculation yields 
\[
\Delta\left((-\mu_m, \mu_m\, m), \, (1,m), \, (-1,m), \, (\mu_m, \mu_m\,m)\right) = \left(2- (m^2+1)^{-1}\right)^{1/2}   
\]   
and so $C_0(X_m,d_E) \geq \left(2- (m^2+1)^{-1}\right)^{1/2}$.
Note that if $(x,y) \in X_m$ and $\lambda >0$ then  $\lambda(x,y) \in X_m$ and so  Corollary~\ref{cor:scalable} gives $C_0(X_m,d_E)=C(X_m,d_E)$.
Hence $C(X_m,d_E) \geq \left(2- (m^2+1)^{-1}\right)^{1/2} >1$ for $m>0$.
It follows from Proposition~\ref{prop:hyperbolic}(i)  that $(X_m,d_E)$ is not Gromov hyperbolic when $m >0$.
Combining 
Propositions~\ref{prop:quasi_bound} and ~\ref{cor:CATzero} yields the non-sharp upper bound:
\[
C(X_m, d_E) \leq \min\left\{ \sqrt{2},  \, (m^2+1)^{1/2}\right\}.
\]
However, numerical calculations strongly suggest that the configuration
$(-\mu_m, \mu_m\, m)$, $(1,m)$, $(-1,m)$,  $(\mu_m, \mu_m\,m)$
of four points in $X_m$ is optimal, that is,
$C(X_m,d_E) =\left(2- (m^2+1)^{-1}\right)^{1/2}$ for all $m > 0$.


\subsection{The graph of $y=x^\alpha$, where $0 < \alpha  <1$, in the Euclidean plane}\label{subsec:graph_alpha} 

For $0 < \alpha < 1$, let $d_\alpha$ be the metric on the half-line, $[0,\infty)$, given by
\[
d_\alpha(x,y) = \left( (x-y)^2 + (x^\alpha - y^\alpha)^2 \right)^{1/2}.
\]

Let $Y_\alpha=\{(x,y)\in\IR^2 ~\mid~ y=x^\alpha, \, x \geq 0\}$ as a subspace of the Euclidean plane.
Projection to the first coordinate, $(x, y) \mapsto x$, gives an isometry $(Y_\alpha, d_E) \rightarrow  ([0,\infty), d_\alpha)$.
The metric behavior of $([0,\infty), d_\alpha)$ separates into two distinct cases,  namely $0 < \alpha \leq 1/2$ and $1/2 < \alpha < 1$.

\begin{prop}\label{prop:lessthanorequaltoonehalf} 
If $0 < \alpha \leq 1/2$ then for all $x,y \geq 0$, $0 \leq d_\alpha(x,y) - |x-y| \leq 1$.
Consequently, for $0 < \alpha \leq 1/2$,  $([0,\infty), d_\alpha)$ is roughly isometric to the Euclidean half-line and  is thus Gromov hyperbolic.
\end{prop}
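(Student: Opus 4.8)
The plan is to establish the pointwise inequality $0 \le d_\alpha(x,y) - |x-y| \le 1$ for all $x,y \ge 0$ when $0 < \alpha \le 1/2$, and then to read off the two consequences: the identity map $([0,\infty), |\cdot|) \to ([0,\infty), d_\alpha)$ is then a $1$-rough isometry (indeed, a $1$-rough isometric embedding that is onto, hence a rough isometry with $R$ arbitrary), and since the Euclidean half-line is $0$-hyperbolic, Proposition~\ref{prop:hyperbolic}(i) together with the fact (which we can cite from \S\ref{sec:elementary} via rough-isometry invariance of Gromov hyperbolicity, or argue directly from Lemma~\ref{lem:quasi_bound}) gives that $([0,\infty), d_\alpha)$ is Gromov hyperbolic. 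Actually, cleaner: by Corollary~\ref{cor:roughiso}, $C([0,\infty),d_\alpha) = C(\IR_{\ge 0}, d_E) = 1$; but to conclude \emph{Gromov hyperbolicity} we should instead invoke that a $\delta$-hyperbolic space roughly isometric to another space remains $\delta'$-hyperbolic — this follows from Lemma~\ref{lem:quasi_bound} applied with $C_1 = C_2 = 1$, $\mu = 1$, which turns a $(1,0)$-four-point inequality on the half-line into a $(1,\delta')$-four-point inequality on $([0,\infty),d_\alpha)$ pulled back along a rough-isometric section. So the real content is the elementary inequality.

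The left inequality $d_\alpha(x,y) \ge |x-y|$ is immediate since $d_\alpha(x,y)^2 = (x-y)^2 + (x^\alpha - y^\alpha)^2 \ge (x-y)^2$. For the right inequality, by symmetry assume $x \ge y \ge 0$ and write $t = x - y \ge 0$. Since $d_\alpha(x,y) \le |x-y| + |x^\alpha - y^\alpha|$ (because $\sqrt{a^2+b^2} \le a + b$ for $a,b \ge 0$), it suffices to show $x^\alpha - y^\alpha \le 1$ whenever... wait, that is false for large $x$; so instead one uses $\sqrt{a^2 + b^2} - a \le b^2/(2a)$ is not quite it either. The right approach: $d_\alpha(x,y) - (x-y) = \dfrac{(x^\alpha - y^\alpha)^2}{d_\alpha(x,y) + (x-y)} \le \dfrac{(x^\alpha - y^\alpha)^2}{2(x-y)}$. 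So the claim reduces to showing
\[
(x^\alpha - y^\alpha)^2 \le 2(x-y) \qquad \text{for all } x \ge y \ge 0,\ 0 < \alpha \le 1/2.
\]
Now the key estimate is $x^\alpha - y^\alpha \le (x-y)^\alpha$ for $x \ge y \ge 0$ and $0 < \alpha \le 1$ (subadditivity of $s \mapsto s^\alpha$, which also underlies the snowflake construction), hence $(x^\alpha - y^\alpha)^2 \le (x-y)^{2\alpha}$. When $0 < \alpha \le 1/2$ we have $2\alpha \le 1$, so I need $(x-y)^{2\alpha} \le 2(x-y)$, i.e. $s^{2\alpha} \le 2s$ for all $s \ge 0$. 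This holds for $s \ge 1$ since $2\alpha \le 1$ gives $s^{2\alpha} \le s \le 2s$, and for $0 \le s \le 1$ since $s^{2\alpha} \le 1$... but $2s$ can be less than $1$. The honest bound is $\max_{s \ge 0}(s^{2\alpha} - 2s)$: setting $\beta = 2\alpha \in (0,1]$, $\frac{d}{ds}(s^\beta - 2s) = \beta s^{\beta-1} - 2 = 0$ at $s_0 = (\beta/2)^{1/(1-\beta)}$, and the maximum value is $s_0^\beta(1 - \beta/2) \cdot$ (a bounded expression); one checks this maximum is $\le 1$. In fact for $\beta \le 1$ the maximum of $s^\beta - 2s$ over $s\ge 0$ is at most $1$ (it is $(1-\beta)(\beta/2)^{\beta/(1-\beta)} \le 1$, with the degenerate case $\beta = 1$ giving $s^1 - 2s \le 0$). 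So $(x^\alpha - y^\alpha)^2 \le (x-y)^{2\alpha} \le 2(x-y) + 1$, hence $d_\alpha(x,y) - (x-y) \le \frac{2(x-y)+1}{2(x-y)}$ — this is not bounded by $1$ near $x = y$! So I must handle small $x - y$ separately: there, $d_\alpha(x,y) - (x-y) \le (x^\alpha - y^\alpha) \le (x-y)^\alpha \le 1$ whenever $x - y \le 1$. Combining: if $x - y \le 1$ use $d_\alpha(x,y) \le (x-y) + (x-y)^\alpha \le (x-y)+1$; if $x - y \ge 1$ use the quotient bound with $(x^\alpha-y^\alpha)^2 \le (x-y)^{2\alpha} \le (x-y)$ (valid since $2\alpha \le 1$ and $x-y \ge 1$), giving $d_\alpha(x,y) - (x-y) \le \frac{(x-y)}{2(x-y)} = \tfrac12 \le 1$. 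Either way the bound holds.

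The main obstacle is purely bookkeeping: organizing the case split at $x - y = 1$ and making sure the subadditivity inequality $x^\alpha - y^\alpha \le (x-y)^\alpha$ is invoked correctly (it is the statement that $f(s) = s^\alpha$ satisfies $f(x) \le f(x-y) + f(y)$, equivalently $f$ is subadditive, which for concave $f$ with $f(0)=0$ is standard). No genuinely hard analysis is needed. Once the pointwise inequality is in hand, the rough-isometry conclusion is formal: the identity map is a $1$-rough isometric embedding by the inequality, it is surjective so $0$-dense, hence a rough isometry, and Gromov hyperbolicity transfers from the Euclidean half-line along a rough isometry (via Lemma~\ref{lem:quasi_bound} with $C_1=C_2=\mu=1$, or by citing that Gromov hyperbolicity is a rough-isometry invariant, which the same lemma establishes).
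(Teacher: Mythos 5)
Your argument is correct and is essentially the paper's: both proofs reduce to the subadditivity bound $|x^\alpha-y^\alpha|\le|x-y|^\alpha$ and then control $\left(u^2+u^{2\alpha}\right)^{1/2}-u$ by splitting at $u=|x-y|=1$, using $\sqrt{a^2+b^2}\le a+b$ for small $u$ and the rationalization $\sqrt{a^2+b^2}-a=b^2/(\sqrt{a^2+b^2}+a)$ together with $u^{2\alpha}\le u$ for large $u$. The only difference is expository (your write-up records some false starts before landing on the case split), and your handling of the rough-isometry and hyperbolicity consequences via Lemma~\ref{lem:quasi_bound} is a fine way to make explicit what the paper leaves implicit.
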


\begin{proof}
We first show that if $0 < \alpha \leq 1/2$ then for $u \geq 0$, $\left(u^2 + u^{2\alpha}\right)^{1/2} - u \leq 1$.
Note that for $u\geq 0$ and $0 < \alpha \leq 1/2$ we have $u^{2\alpha} \leq \max\{u^\alpha, u\}$ and so
\[
\left(u^2 + u^{2\alpha}\right)^{1/2} - u  = \frac{u^{2\alpha}}{\left(u^2 + u^{2\alpha}\right)^{1/2} + u} ~\leq~ \frac{\max\{u^\alpha, u\}}{\left(u^2 + u^{2\alpha}\right)^{1/2} + u} ~\leq~ 1.
\]
For $0 < \alpha < 1$ and $x,y \geq 0$,  $| x^\alpha - y^\alpha | \leq |x -y|^\alpha$.
Hence for $0 < \alpha \leq 1/2$ and  $x, y \geq 0$,  and using the inequality
$\left(u^2 + u^{2\alpha}\right)^{1/2} - u \leq 1$ with $u = |x-y|$, we have
\[
0 \leq \left( (x-y)^2 + (x^\alpha - y^\alpha)^2 \right)^{1/2}  - |x-y| \leq \left( |x-y|^2 + |x -y|^{2\alpha} \right)^{1/2} - |x-y| \leq 1,
\]
establishing the conclusion of the Proposition.
\end{proof}

In \cite[1.23 Exercise, p.412]{Bridson-Haefliger} it is asserted that 
$([0,\infty), d_{1/2})$ is not Gromov hyperbolic.  
This is not accurate as demonstrated by Proposition~\ref{prop:lessthanorequaltoonehalf},  however, we show in Proposition~\ref{prop:notgromovhyperbolic}  that
$([0,\infty), d_{\alpha})$ is not Gromov hyperbolic if $1/2 < \alpha <1$.

\begin{lemma}\label{lem:gromov-busting} 
Let $f(\alpha) = 2^{2\alpha-2} + \tfrac{1}{6} (1-2^{2\alpha})^2 - \tfrac{1}{2} (1-2^\alpha)^2 - 2^{4\alpha-3}$. 
If  $0 < \alpha < 1$ then $f(\alpha) > 0$ and
\[
\lim_{t \rightarrow \infty} \left( d_{\alpha}(t,4t) + d_{\alpha}(0,2t) - d_{\alpha}(t,2t) - d_{\alpha}(0,4t) \right)  \, / \,  t^{2\alpha -1}  = f(\alpha).
\]
\end{lemma}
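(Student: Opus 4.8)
The plan is to prove the two assertions separately: first the asymptotic evaluation of the limit, then the strict positivity of $f(\alpha)$. For the limit, I would exploit homogeneity by writing, for $0 \le a < b$,
\[
d_\alpha(at,bt) = (b-a)t\,\sqrt{1 + \frac{(b^\alpha - a^\alpha)^2}{(b-a)^2}\, t^{2\alpha-2}}.
\]
Since $0 < \alpha < 1$ we have $2\alpha - 2 < 0$, so the quantity under the radical tends to $1$ as $t \to \infty$, and the expansion $\sqrt{1+u} = 1 + \tfrac{u}{2} + O(u^2)$ applies with remainder $O(t^{4\alpha-4})$; multiplying back by $(b-a)t$ gives
\[
d_\alpha(at,bt) = (b-a)t + \frac{(b^\alpha - a^\alpha)^2}{2(b-a)}\, t^{2\alpha-1} + O(t^{4\alpha-3}).
\]
Applying this with $(a,b)$ equal to $(1,4)$, $(0,2)$, $(1,2)$, $(0,4)$ and forming the combination $d_\alpha(t,4t) + d_\alpha(0,2t) - d_\alpha(t,2t) - d_\alpha(0,4t)$, the linear terms cancel because $3 + 2 - 1 - 4 = 0$, and collecting the $t^{2\alpha-1}$ coefficients produces exactly
\[
\frac{(1-2^{2\alpha})^2}{6} + 2^{2\alpha-2} - \frac{(1-2^\alpha)^2}{2} - 2^{4\alpha-3} = f(\alpha).
\]
Dividing by $t^{2\alpha-1}$ and noting that the remainder, after division, is $O(t^{2\alpha-2}) \to 0$ (again because $\alpha < 1$) yields the asserted limit.

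For the positivity I would substitute $s = 2^\alpha$, so that $s$ ranges over the open interval $(1,2)$ as $\alpha$ ranges over $(0,1)$, and $2^{2\alpha} = s^2$, $2^{4\alpha} = s^4$, $2^{2\alpha-2} = s^2/4$, $2^{4\alpha-3} = s^4/8$. A direct expansion then gives $f(\alpha) = \tfrac{1}{24}\bigl(s^4 - 14 s^2 + 24 s - 8\bigr)$. The key algebraic point is the factorization
\[
s^4 - 14 s^2 + 24 s - 8 = (s-2)^2 (s^2 + 4s - 2),
\]
which I would verify by direct multiplication. For $s \in (1,2)$ the factor $(s-2)^2$ is strictly positive, while $s^2 + 4s - 2$ is increasing on $(0,\infty)$ and equals $3$ at $s = 1$, hence is positive throughout $(1,2)$; therefore $f(\alpha) > 0$ for $0 < \alpha < 1$.

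I do not expect a serious obstacle. The only step that is not purely mechanical is spotting the factorization $(s-2)^2(s^2+4s-2)$, equivalently observing that $s = 2$ (that is, $\alpha = 1$) is a \emph{double} zero of the quartic, which is precisely what makes the sign transparent on $(1,2)$. The sole point in the limit argument needing a little care is confirming that the $O(t^{4\alpha-3})$ remainders are genuinely of lower order than $t^{2\alpha-1}$, which holds exactly because $\alpha < 1$; this is also where the hypothesis $\alpha < 1$ (as opposed to $\alpha \le 1$) is used.
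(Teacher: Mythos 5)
Your proposal is correct and follows essentially the same route as the paper: the limit is obtained from the first-order expansion of each square root (the paper packages this as computing $h'(0)$ for $h(s)$ with $s=t^{2\alpha-2}$, which is the same Taylor coefficient you extract), and the positivity argument via the substitution $s=2^\alpha$ and the factorization $s^4-14s^2+24s-8=(s-2)^2(s^2+4s-2)$ is identical to the paper's polynomial $g$. No gaps.
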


\begin{proof}
Consider the polynomial $g(x) =  \tfrac{1}{24} x^4 -  \tfrac{7}{12} x^2 + x - \tfrac{1}{3} =\tfrac{1}{24} (x -2)^2 (x^2 + 4x -2)$.
Using the factored expression for $g(x)$, we see that $g(x) > 0$ for $1 < x < 2$.
Note that $f(\alpha) = g(2^{\alpha})$.  
Hence $f(\alpha)  > 0$ for $0 < \alpha < 1$.  
For $s \geq 0$, let 
\[
h(s) =  \left( 3^2 + (1-4^\alpha)^2 s \right) ^{1/2} + \left(2^2 + 2^{2\alpha} s\right)^{1/2} - \left( 1 + (1-2^\alpha)^2 s \right)^{1/2} - \left( 4^2 + 4^{2\alpha} s \right)^{1/2}.
\]
A straightforward calculation reveals that, for $t>0$,
\[
\theta(t) ~=~ \left( d_{\alpha}(t,4t)  + d_{\alpha}(0,2t) - d_{\alpha}(t,2t) - d_{\alpha}(0,4t)  \right) \, / \,  t^{2\alpha -1} =  h(t^{2\alpha -2}) \, / \, t^{2\alpha -2}.
\]
Since $2 \alpha - 2 < 0$, $\lim_{t \to \infty} t^{2\alpha -2} =0$ and so
\[
\lim_{t \to \infty} \theta(t) = \lim_{s \to 0 } \frac{h(s)}{s} = h'(0) = f(\alpha)
\]
yielding the conclusion of the Lemma.
\end{proof}

\begin{prop}\label{prop:notgromovhyperbolic} 
If $1/2 < \alpha <1$ then $([0,\infty), d_\alpha)$ is not Gromov hyperbolic.
\end{prop}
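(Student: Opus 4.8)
The plan is to contradict the four-point characterization of Gromov hyperbolicity using precisely the configuration of points built in Lemma~\ref{lem:gromov-busting}. Recall that $(X,d)$ is $\delta$-hyperbolic if and only if it satisfies the $(1,\delta)$-four-point inequality. So if $([0,\infty),d_\alpha)$ were Gromov hyperbolic there would be a single $\delta \ge 0$ with
\[
xy + zw \le \max\{xz + yw,\, xw + yz\} + 2\delta
\]
for all $x,y,z,w \ge 0$. I would apply this to the four points $x = 0$, $y = 2t$, $z = t$, $w = 4t$ with $t > 0$, for which it reads
\[
d_\alpha(0,2t) + d_\alpha(t,4t) \le \max\{\, d_\alpha(0,t) + d_\alpha(2t,4t),\; d_\alpha(0,4t) + d_\alpha(t,2t)\,\} + 2\delta ,
\]
and the strategy is to show that the left-hand side exceeds \emph{each} of the two quantities inside the $\max$ by an amount tending to $+\infty$ with $t$, which is incompatible with the fixed bound $2\delta$.

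For the pairing $d_\alpha(0,t) + d_\alpha(2t,4t)$ a soft estimate suffices: from $|x-y| \le d_\alpha(x,y) \le |x-y| + |x^\alpha - y^\alpha| \le |x-y| + |x-y|^\alpha$ (the last inequality using $|x^\alpha - y^\alpha| \le |x-y|^\alpha$, as already noted in the proof of Proposition~\ref{prop:lessthanorequaltoonehalf}) one gets
\[
d_\alpha(0,2t) + d_\alpha(t,4t) - \big(d_\alpha(0,t) + d_\alpha(2t,4t)\big) \;\ge\; 5t - \big(3t + (1 + 2^\alpha)t^\alpha\big) \;=\; 2t - (1+2^\alpha)\,t^\alpha ,
\]
which tends to $+\infty$ because $\alpha < 1$. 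For the pairing $d_\alpha(0,4t) + d_\alpha(t,2t)$, Lemma~\ref{lem:gromov-busting} gives exactly
\[
d_\alpha(0,2t) + d_\alpha(t,4t) - \big(d_\alpha(0,4t) + d_\alpha(t,2t)\big) \;=\; \big(f(\alpha) + o(1)\big)\, t^{2\alpha - 1},
\]
and since $f(\alpha) > 0$ and $2\alpha - 1 > 0$ — this is the only place the hypothesis $\alpha > 1/2$ is used — this too tends to $+\infty$. Consequently $d_\alpha(0,2t) + d_\alpha(t,4t) - \max\{\ldots\} \to +\infty$ as $t \to \infty$, contradicting the displayed four-point inequality for any fixed $\delta$. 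Hence $([0,\infty),d_\alpha)$ satisfies no $(1,\delta)$-four-point inequality, i.e.\ it is not Gromov hyperbolic.

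I do not expect a real obstacle here, since the analytic content is entirely packaged in Lemma~\ref{lem:gromov-busting}; the only point needing a moment's care is the labeling in the four-point inequality together with the observation that the ``short--short'' pairing $d_\alpha(0,t)+d_\alpha(2t,4t)$ — the one \emph{not} controlled by the Lemma — is dominated for the trivial reason that its leading-order size $3t$ is strictly less than $5t$. One could instead note that the four points lie on the concave graph $y = x^\alpha$, hence are in convex position in the plane, so that $d_\alpha(0,2t)+d_\alpha(t,4t)$ is automatically the largest of the three pairing sums (sum of the diagonals of a convex quadrilateral is at least the sum of either pair of opposite sides); but the direct estimate above is shorter and avoids invoking convex-quadrilateral geometry.
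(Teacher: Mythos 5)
Your proof is correct and follows essentially the same route as the paper: both apply Lemma~\ref{lem:gromov-busting} to the configuration $\{0,t,2t,4t\}$ and let $t\to\infty$. The only (immaterial) difference is in handling the $\max$: the paper shows the ratio of the two competing pairing sums tends to $5/3$, so that the pairing controlled by the Lemma is the larger one for large $t$, whereas you bound the excess of $d_\alpha(0,2t)+d_\alpha(t,4t)$ over each pairing separately; both dispositions work.
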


\begin{proof}
For $x,y,z,w \in [0,\infty)$ and $0 < \alpha <1$, let
\[
\operatorname{Gr}_\alpha(x,y,z,w) ~=~
d_\alpha(x,y) + d_\alpha(z,w)  - \max\left\{ {d_\alpha(x,z)  + d_\alpha(y,w), d_\alpha(x,w)  + d_\alpha(y,z)} \right\}.
\]
Note that $([0,\infty), d_\alpha)$ is not Gromov hyperbolic if and only if \hbox{$\sup_{x,y,z,w}\operatorname{Gr}_\alpha(x,y,z,w)  = \infty$.}

For $t > 0$, let
\[
h(t) = \frac{d_{\alpha}(t,2t) + d_{\alpha}(0,4t)}{d_{\alpha}(0,t) + d_{\alpha}(2t,4t)}
=\frac{\left(1 + (1 - 2^\alpha)^2 \, t^{2\alpha -2}\right)^{1/2} + \left(4^2+  4^{2\alpha} \, t^{2\alpha -2}\right)^{1/2}}
{\left(1 + t^{2\alpha -2}\right)^{1/2} + \left(2^2 + (2^\alpha - 4^\alpha)^2 \, t^{2\alpha -2}\right)^{1/2}}.
\]
Since $2 \alpha - 2 < 0$, $\lim_{t \rightarrow \infty} t^{2 \alpha - 2}  = 0$ and so the above expression for $h(t)$ yields $\lim_{t \rightarrow \infty} h(t) =  5/3$.
Hence $d_{\alpha}(t,2t) + d_{\alpha}(0,4t) > d_{\alpha}(0,t) + d_{\alpha}(2t,4t)$ for sufficiently large $t$
which implies that 
$\operatorname{Gr}_\alpha(t,4t,0,2t) = d_{\alpha}(t,4t) + d_{\alpha}(0,2t) - d_{\alpha}(t,2t) - d_{\alpha}(0,4t)$
for sufficiently large $t$.
If $1/2 < \alpha <1$ then $2 \alpha - 1 > 0$ and so 
Lemma~\ref{lem:gromov-busting} implies that $\lim_{t \rightarrow \infty}  \operatorname{Gr}_\alpha(t,4t,0,2t) = \infty$. 
\end{proof}

\begin{prop}\label{prop:quasiconstantone} 
If $0 < \alpha <1$ then $C([0,\infty), d_\alpha)=1$.
\end{prop}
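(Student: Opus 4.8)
The plan is to show $C([0,\infty),d_\alpha) = 1$ by establishing that $([0,\infty),d_\alpha)$ satisfies a $(1,\delta)$-four-point inequality for some $\delta$; equivalently, that $([0,\infty),d_\alpha)$ is Gromov hyperbolic with hyperbolicity constant independent of where the four points sit. By Proposition~\ref{prop:basicstwo}(iv), since the space is unbounded we already have $C([0,\infty),d_\alpha)\ge 1$, so only the upper bound $C\le 1$ is at issue, and for this it suffices to exhibit a single finite $\delta$ valid for all quadruples.

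The key idea is a comparison with the Euclidean half-line $([0,\infty), d_E)$, for which the $(1,0)$-four-point inequality holds (it is $0$-hyperbolic by Proposition~\ref{prop:hyperbolic}). The obvious identity map $\iota\colon ([0,\infty),d_E)\to([0,\infty),d_\alpha)$ satisfies $d_E(x,y)\le d_\alpha(x,y)$ pointwise, so one inequality comes for free. The other direction is the crux: one needs $d_\alpha(x,y)\le d_E(x,y) + (\text{something controlled})$. From the definition, $d_\alpha(x,y) - |x-y| = \bigl((x-y)^2+(x^\alpha-y^\alpha)^2\bigr)^{1/2} - |x-y|$, and since $|x^\alpha - y^\alpha|\le |x-y|^\alpha$ this is at most $\bigl(|x-y|^2+|x-y|^{2\alpha}\bigr)^{1/2}-|x-y|$. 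Writing $u=|x-y|$, this equals $u^{2\alpha}\big/\bigl((u^2+u^{2\alpha})^{1/2}+u\bigr)$. For $u\le 1$ this is $\le u^\alpha \le 1$; but for $u\ge 1$ and $\alpha > 1/2$ it behaves like $\tfrac12 u^{2\alpha - 1}\to\infty$, so $\iota$ is \emph{not} a rough isometry and Corollary~\ref{cor:roughiso} does not apply directly — this is precisely why Proposition~\ref{prop:lessthanorequaltoonehalf} was restricted to $\alpha\le 1/2$. So the naive approach fails for $1/2 < \alpha < 1$, and a genuinely different argument is needed: one must control the \emph{four-point defect}
\[
d_\alpha(x,y)+d_\alpha(z,w) - \max\{d_\alpha(x,z)+d_\alpha(y,w),\, d_\alpha(x,w)+d_\alpha(y,z)\}
\]
directly, showing it is bounded above by an absolute constant $2\delta$ even though individual edge-length distortions are unbounded.

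I expect the main obstacle to be exactly this uniform bound on the defect. The natural route is: fix a quadruple, relabel so that (say) $d_\alpha(x,z)+d_\alpha(y,w)$ is the larger of the two sums, and bound $d_\alpha(x,y)+d_\alpha(z,w) - d_\alpha(x,z)-d_\alpha(y,w)$. Using $d_\alpha = \big(d_E^2 + \phi^2\big)^{1/2}$ with $\phi(x,y)=|x^\alpha-y^\alpha|$, and the elementary estimate $(a^2+b^2)^{1/2} \le a + b$ together with $(a^2+b^2)^{1/2}\ge a$, one splits each $d_\alpha$ term into its Euclidean part (where the defect is $\le 0$ by $0$-hyperbolicity of the line) plus an error controlled by the $\phi$-terms. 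The crucial point is that the $\phi$ quantities themselves satisfy a compatible four-point inequality: since $t\mapsto t^\alpha$ is monotone, the four numbers $x^\alpha,y^\alpha,z^\alpha,w^\alpha$ lie on a line, so $\phi$ is itself (the restriction of) a $0$-hyperbolic metric and its associated defect is also $\le 0$. Combining the Euclidean defect being $\le 0$, the $\phi$-defect being $\le 0$, and carefully tracking the cross-terms via $(a^2+b^2)^{1/2}$-vs-$(a+b)$ comparisons should yield a universal constant $\delta$ (one anticipates something like $\delta = 1$ or a small explicit number). An alternative, possibly cleaner, route: truncate the half-line, noting the distortion error $d_\alpha - d_E$ is bounded on $[0,R]$ for each $R$, combine with the $\alpha\le 1/2$-style estimate being exact for $u\le 1$, and patch the large-scale behaviour using the quasi-isometry to $\IR$ from the related graph $Y_\alpha$ — but making the constant genuinely uniform still requires the direct defect computation. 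In either approach, the routine-but-delicate inequality manipulations with the square-root expressions are where the real work lies.
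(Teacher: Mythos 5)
Your overall strategy cannot work: you set out to prove that $([0,\infty),d_\alpha)$ satisfies a $(1,\delta)$-four-point inequality for a single finite $\delta$, i.e.\ that the four-point defect
\[
d_\alpha(x,y)+d_\alpha(z,w)-\max\{d_\alpha(x,z)+d_\alpha(y,w),\,d_\alpha(x,w)+d_\alpha(y,z)\}
\]
is bounded above by an absolute constant. That is precisely the assertion that the space is Gromov hyperbolic, and it is \emph{false} for $1/2<\alpha<1$: Proposition \ref{prop:notgromovhyperbolic}, via Lemma \ref{lem:gromov-busting}, shows that for the quadruples $(t,4t,0,2t)$ the defect grows like $f(\alpha)\,t^{2\alpha-1}\to\infty$. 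So the ``uniform bound on the defect'' that you identify as the main obstacle is not merely delicate --- it does not exist, and no amount of care with the square-root comparisons, or with the $0$-hyperbolicity of the Euclidean part and of the $\phi$-part separately, can produce it. (Concretely, the additive losses in the comparison $(a^2+b^2)^{1/2}\le a+b$ applied to $d_\alpha=(d_E^2+\phi^2)^{1/2}$ are themselves of unbounded size on these quadruples, so the cross-terms you propose to ``track carefully'' cannot be absorbed into a constant.)

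The point you are missing is that $C(X,d)\le 1$ does not require the infimum defining $C$ to be attained at $\mu=1$: it only requires that for every $\mu>1$ there exist \emph{some} $\delta=\delta(\mu)$, and $\delta(\mu)$ is allowed to blow up as $\mu\downarrow 1$. This is exactly how the paper argues. For each $L>0$ one shows
\[
|x-y|\;\le\; d_\alpha(x,y)\;\le\;\bigl(1+L^{2\alpha-2}\bigr)^{1/2}|x-y|+L\bigl(1+L^{2\alpha-2}\bigr)^{1/2},
\]
by treating the cases $|x-y|\ge L$ and $|x-y|\le L$ separately; thus the identity map is a quasi-isometric embedding into the $0$-hyperbolic half-line with multiplicative distortion $(1+L^{2\alpha-2})^{1/2}$ and an additive constant that grows with $L$. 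Proposition \ref{prop:quasi_bound}(i) then gives $C([0,\infty),d_\alpha)\le(1+L^{2\alpha-2})^{1/2}$ for every $L$, and letting $L\to\infty$ (using $2\alpha-2<0$) yields $C\le 1$; the lower bound is Proposition \ref{prop:basicstwo}(iv). Your own first paragraph essentially contains the needed distortion estimates, but you discard this route because the additive constant is unbounded in $L$ --- whereas an unbounded additive constant is exactly what the definition of $C$ tolerates, and exploiting that trade-off is the entire content of the proof.
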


\begin{proof}
Let $L >0$.  If $x,y \geq 0$ and $| x -y | \geq L$ then
\[
\frac{| x^\alpha - y^\alpha |}{| x -y|} ~\leq~ \frac{| x - y |^{\alpha}}{| x -y|} ~=~ | x - y |^{\alpha -1} ~\leq~ L^{\alpha -1} ,
\]
and so for $| x - y| \geq L$,
\[
d_\alpha(x,y) \leq \left( |x-y|^2 + \left(L^{\alpha -1}|x-y|\right)^2 \right)^{1/2} \leq \left( 1+ L^{2\alpha -2} \right)^{1/2}|x-y|.
\]
If  $x,y \geq 0$ and $| x -y | \leq L$ then
\[
d_\alpha(x,y) \leq \left( |x-y|^2 + |x -y|^{2\alpha} \right)^{1/2} \leq \left( L^2 + L^{2\alpha} \right)^{1/2} = L \left( 1+ L^{2\alpha -2} \right)^{1/2}.
\]
It follows that  for all $x,y \geq 0$
 \begin{equation}\label{eq:gromovalpha}
|x -y|\leq d_\alpha(x,y) \leq \left( 1+ L^{2\alpha -2} \right)^{1/2}|x-y| +  L \left( 1+ L^{2\alpha -2} \right)^{1/2}.
\end{equation}
Let $d_E(x,y)= |x -y|$, the Euclidean metric on $[0,\infty)$.
By Proposition~\ref{prop:hyperbolic}(i), $C([0,\infty), d_E) = 1$.
Proposition~\ref{prop:quasi_bound}(i) and~\eqref{eq:gromovalpha}
imply that $C([0,\infty), d_\alpha) \leq \left( 1+ L^{2\alpha -2} \right)^{1/2}$.
Since $2\alpha -2 < 0$,  
we have that $\lim_{L \rightarrow \infty} \left( 1+ L^{2\alpha -2} \right)^{1/2} = 1$.
Hence $C([0,\infty), d_\alpha) \leq 1$.
Furthermore,
by Proposition~\ref{prop:basicstwo}(iv), $C([0,\infty), d_\alpha) \geq 1$ and so $C([0,\infty), d_\alpha) =1$.
\end{proof}

\begin{rem} \label{rem:quasialpha}
It follows from
the inequality~\eqref{eq:gromovalpha} that the identity map $([0,\infty), d_E) \rightarrow ([0,\infty), d_\alpha)$ is a quasi-isometry.
In this inequality, there is a trade-off between the
distortion parameter,  $(1+ L^{2\alpha -2})^{1/2}$, and the  roughness parameter,
$L \left( 1+ L^{2\alpha -2} \right)^{1/2}$, that is, an attempt to adjust the number $L$ to make the distortion small (close to $1$) makes
the roughness large and vice versa.
\end{rem}

We showed that for $1/2 < \alpha <1$ the space $([0,\infty), d_\alpha)$ is not Gromov hyperbolic but, nevertheless, $C([0,\infty), d_\alpha)=1$.

\begin{question} \label{ques:ishyperbolic}
Assume that $(X,d)$ is a geodesic metric space or, more generally, roughly geodesic.  Does $C(X,d) =1$ imply that $(X,d)$ is Gromov hyperbolic?
\end{question}

For $1/2 < \alpha <1$, the space $([0,\infty), d_\alpha)$ is not roughly geodesic
and so does not provide a negative answer to this question.
Some evidence in favor of an affirmative answer to Quesition~\ref{ques:ishyperbolic} is given by the following result
(see \S~\ref{sec:catzero} for a discussion of $\CAT(0)$-spaces).

\begin{prop}\label{prop:quasiconstantoneCATzero} 
Let $(X,d)$ be a proper $\CAT(0)$-space.  If  $C(X,d) =1$ then $(X,d)$ is Gromov hyperbolic.
\end{prop}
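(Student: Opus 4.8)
The plan is to argue by contraposition: assuming $(X,d)$ is a proper $\CAT(0)$-space that is \emph{not} Gromov hyperbolic, I will show $C(X,d) \geq \sqrt{2}$, which is incompatible with the hypothesis $C(X,d) = 1$. The geometric input is Bridson's Flat Plane Theorem together with the structure theory of non-hyperbolic $\CAT(0)$-spaces: since $X$ is proper, $\CAT(0)$, and not Gromov hyperbolic, $X$ contains an isometrically embedded flat plane (an isometric copy of $(\IR^2,d_E)$) or, at the very least, isometrically embedded flat Euclidean squares $[0,s]^2 \subset X$ of arbitrarily large side length $s$. In the presence of a full flat plane the conclusion is immediate: by monotonicity of the quasi-hyperbolicity constant under metric subspaces, Proposition \ref{prop:basicstwo}(i), together with the value of the Euclidean constant, Proposition \ref{prop:euclidean}, we get $C(X,d) \geq C(\IR^2, d_E) = \sqrt{2} > 1$, contradicting $C(X,d)=1$. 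Since $X$ is unbounded, Proposition \ref{prop:hyperbolic}(i) then shows that the hyperbolic alternative is what remains.

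Because a general proper (non-cocompact) $\CAT(0)$-space need not contain a whole flat plane, it is worth recording that arbitrarily large flat squares already suffice. In a flat square of side $s$ with vertices $P_1,P_2,P_3,P_4$ labelled cyclically, the opposite sides give $P_1P_2 + P_3P_4 = P_1P_4 + P_2P_3 = 2s$, while the diagonals give $P_1P_3 = P_2P_4 = \sqrt{2}\,s$; hence $\Delta(P_1,P_3,P_2,P_4) = \sqrt{2}$ with the denominator $\max\{P_1P_2+P_3P_4,\, P_1P_4+P_2P_3\}$ equal to $2s$. Now fix any $\mu < \sqrt{2}$ and any $\delta \geq 0$: applying the $(\mu,\delta)$-four-point inequality to $P_1,P_3,P_2,P_4$ inside a flat square of side $s \subset X$ forces $2\sqrt{2}\,s \leq 2\mu s + 2\delta$, i.e. $(\sqrt{2}-\mu)\,s \leq \delta$, which fails once $s$ is large enough. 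Thus $X$ satisfies no $(\mu,\delta)$-four-point inequality with $\mu < \sqrt{2}$, so $C(X,d) \geq \sqrt{2}$, and again this contradicts $C(X,d)=1$.

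The main obstacle is the first step: extracting arbitrarily large flat squares (or a flat plane) from a non-hyperbolic proper $\CAT(0)$-space. This is a genuine theorem of $\CAT(0)$-geometry, not a formal manipulation. One starts from geodesic quadrilaterals in $X$ that are ``fat'' at larger and larger scales — their existence being exactly what failure of Gromov hyperbolicity provides for a geodesic space — rescales the metric (which preserves the $\CAT(0)$ condition), passes to a pointed Gromov--Hausdorff limit using properness and Arzelà--Ascoli, and invokes the flat-quadrilateral rigidity underlying the Flat Plane Theorem to locate genuine flat squares; the delicate point is ensuring the limiting configuration does not degenerate. Once this input is available, the rest is the short computation above, and in fact the argument shows more, namely that $C(X,d) = \sqrt{2}$ whenever a proper $\CAT(0)$-space contains arbitrarily large flat squares, in agreement with the upper bound $C(X,d) \leq C_0(X,d) \leq \sqrt{2}$ valid for all $\CAT(0)$-spaces.
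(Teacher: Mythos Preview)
Your first paragraph is exactly the paper's proof: argue by contraposition, invoke Bridson's Flat Plane Theorem \cite[Theorem~A]{Bridson1995} to obtain an isometrically embedded Euclidean plane in $X$, then conclude $C(X,d)\ge C(\IR^2,d_E)=\sqrt{2}$ via monotonicity under subspaces (Proposition~\ref{prop:basicstwo}(i)) and Proposition~\ref{prop:euclidean}. The paper's argument consists of precisely these three steps and stops there.

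Your remaining two paragraphs go beyond the paper. You flag that the Flat Plane Theorem, as stated in \cite{Bridson1995}, carries a cocompactness hypothesis, and you supply a workaround: arbitrarily large flat squares, together with the direct four-point computation $\Delta(P_1,P_3,P_2,P_4)=\sqrt{2}$ on a square of side $s$, already violate any $(\mu,\delta)$-four-point inequality with $\mu<\sqrt{2}$ once $s$ is large. This extra care is not in the paper, which simply cites Theorem~A and asserts the embedded plane without discussing cocompactness; your version is more robust, at the cost of relying on a less directly quotable intermediate fact (existence of arbitrarily large flat pieces in a proper non-hyperbolic $\CAT(0)$-space) that, as you acknowledge, requires the flat-quadrilateral machinery underlying Bridson's proof rather than the headline theorem itself.
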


\begin{proof}
Assume the proper $\CAT(0)$-space $(X,d)$  is not Gromov hyperbolic.
Bridson's {\it Flat Plane Theorem}, \cite[Theorem A]{Bridson1995}, asserts  that there exists an isometric embedding of a Euclidean plane, $(V,d_E)$, into $X$.
Hence $C(V,d_E) \leq C(X,d)$.
By Proposition~\ref{prop:euclidean}, $C(V,d_E)=\sqrt{2}$  and so  $C(X,d) \geq \sqrt{2}$.
In particular, $C(X,d) \neq1$.
\end{proof}


\section{The Ptolemy and quadrilateral inequalities, CAT$(0)$-spaces}\label{sec:catzero}

The notion of a $\CAT(0)$-space generalizes the concept of a simply connected, complete Riemannian manifold of non-positive sectional curvature to  geodesic metric spaces.
We show that the quadrilateral constant of a $\CAT(0)$-space is bounded from above by $\sqrt{2}$.
Indeed, the quadrilateral constant  of any metric space whose distance satisfies Ptolemy's inequality and
the quadrilateral inequality, in particular  any $\CAT(0)$-space, is bounded from above by $\sqrt{2}$,  Theorem~\ref{thm:Ptround}.
The  quadrilateral constant of any Euclidean space of dimension greater than one is equal to $\sqrt{2}$, 
Proposition~\ref{prop:euclidean}.

\begin{defi} \label{prop:basicineq} Let $(X,d)$ be a metric space.
\begin{itemize}\label{ineq:Ptolemy}
\item[(i)] The metric $d$  satisfies {\it Ptolemy's inequality} if for all $x,y,z,w \in X$,
\[
(xy)(zw)\le(xz)(yw)+(xw)(yz).
\]
In this case we say $(X,d)$ is {\it Ptolemaic}.
\item[(ii)]  The metric $d$  satisfies the {\it quadrilateral inequality} if for all $x,y,z,w \in X$,
\[
(xy)^2+(zw)^2\le(xz)^2+(yw)^2+(xw)^2+(yz)^2.
\]
In this case we say $(X,d)$ is {\it $2$-round} (see Definition~\ref{def:round}).
\end{itemize}
\end{defi}

Recall that a {\it Euclidean space} is a real vector space $V$ together with a positive definite inner product, $(u,v) \mapsto \langle u, v \rangle$.
The inner product yields a {\it Euclidean norm}, $\| x \| = \langle x, x \rangle^{1/2}$,  and  a corresponding {\it Euclidean metric}, $d(u,v) = \| x -y\|$.
It is classical mathematics that a Euclidean space with its Euclidean metric is Ptolemaic and $2$-round.

\begin{thm}\label{thm:Ptround}
If the metric space $(X,d)$ is Ptolemaic and $2$-round then 
$C_0(X,d)\le\sqrt{2}$. 
\end{thm}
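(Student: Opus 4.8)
The plan is to work directly with the description of $C_0$ as a supremum: by \eqref{eq:Czero} it suffices to prove that
\[
xy+zw \le \sqrt{2}\,\max\{xz+yw,\,xw+yz\}
\]
for every $x,y,z,w\in X$ (the case of fewer than two points being trivial). Neither hypothesis gives this on its own, but the two combine exactly once one squares the left-hand side.

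First I would expand $(xy+zw)^2=(xy)^2+(zw)^2+2(xy)(zw)$, apply the quadrilateral inequality ($2$-roundness) to the sum $(xy)^2+(zw)^2$, and apply Ptolemy's inequality, doubled, to the cross term, in the form $2(xy)(zw)\le 2(xz)(yw)+2(xw)(yz)$. Adding these bounds yields
\[
(xy+zw)^2 \le (xz)^2+2(xz)(yw)+(yw)^2+(xw)^2+2(xw)(yz)+(yz)^2 = (xz+yw)^2+(xw+yz)^2,
\]
the crucial point being that the six terms on the right regroup into two perfect squares precisely because the product pairs $(xz)(yw)$ and $(xw)(yz)$ produced by Ptolemy's inequality are compatible with the squared-distance pairs $(xz)^2+(yw)^2$ and $(xw)^2+(yz)^2$ produced by the quadrilateral inequality.

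Finally I would use the elementary bound $s^2+t^2\le 2\max\{s,t\}^2$ for $s,t\ge 0$ to get $(xz+yw)^2+(xw+yz)^2\le 2\max\{xz+yw,\,xw+yz\}^2$, then take square roots to obtain the desired inequality, hence $\Delta(x,y,z,w)\le\sqrt{2}$ for all non-constant quadruples; taking the supremum and invoking \eqref{eq:Czero} gives $C_0(X,d)\le\sqrt{2}$. There is essentially no analytic obstacle here; the only thing that requires care is the bookkeeping when applying Ptolemy's inequality, namely pairing $(xy)(zw)$ with exactly the combination $(xz)(yw)+(xw)(yz)$ that completes the two squares.
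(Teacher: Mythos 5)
Your proposal is correct and is essentially identical to the paper's proof: both combine the quadrilateral inequality with the doubled Ptolemy inequality to obtain $(xy+zw)^2 \le (xz+yw)^2 + (xw+yz)^2$, and then apply the elementary bound $a^2+b^2 \le 2\max\{a,b\}^2$ before taking square roots. No gaps.
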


\begin{proof}
Assume $(X,d)$ is Ptolemaic and $2$-round.   Then for  $x,y,z,w \in X$,
\begin{eqnarray*}
	(xy)(zw) &\le& (xz)(yw)+(xw)(yz) \quad \text{ and } \\
	 (xy)^2+(zw)^2 &\le& (xz)^2+(yw)^2+(xw)^2+(yz)^2.
\end{eqnarray*}
Multiplying the first inequality by $2$ and adding it to the second one yields:
\[
 (xy +zw)^2 \le (xz +yw)^2+ (xw +yz)^2.
\]
For non-negative real numbers $a,b$ we have $\sqrt{a^2 + b^2} \leq \sqrt{2} \, \max\{a,b\}$ and so
the above inequality implies
\[
 xy +zw  \le \sqrt{2} \, \max\{xz +yw, \, xw +yz\}
\]
from which it follows that $C_0(X,d)\le \, \sqrt{2}$.
\end{proof}

Informally, a $\CAT(0)$-space is a geodesic metric space whose geodesic triangles are 
are not fatter than corresponding comparison triangles in the Euclidean plane,
see \cite[II.1.1, page 158]{Bridson-Haefliger} for the precise definition. 
Since any configuration of four points in a  $\CAT(0)$-space has a ``subembedding'' into Euclidean space, \cite[page 164]{Bridson-Haefliger}, 
a $\CAT(0)$-space is Ptolemaic and $2$-round.

\begin{cor}\label{cor:CATzero}
If $(X,d)$ is a subspace of a $\CAT(0)$-space then $C_0(X,d) \leq \sqrt{2}$.
\end{cor}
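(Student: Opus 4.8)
The plan is to reduce the statement to Theorem \ref{thm:Ptround} using the fact that $C_0$ does not increase when one passes to a subspace. First I would recall the observation recorded immediately above the corollary: every $\CAT(0)$-space $Y$ is Ptolemaic and $2$-round. The reason is that any configuration of four points in $Y$ admits a subembedding into some Euclidean space (\cite[page 164]{Bridson-Haefliger}), which is classically Ptolemaic and $2$-round, and both Ptolemy's inequality and the quadrilateral inequality are conditions involving only four points at a time, so they transfer back to $Y$. Hence Theorem \ref{thm:Ptround} applies to $Y$ and gives $C_0(Y,d_Y) \le \sqrt{2}$.

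Next, given $(X,d)$ realized as a subspace of such a $\CAT(0)$-space $Y$, I would invoke Proposition \ref{prop:basicstwo}(i), which asserts $C_0(A,d_A) \le C_0(Z,d_Z)$ whenever $A \subset Z$ carries the subspace metric. Taking $A = X$ and $Z = Y$ yields $C_0(X,d) \le C_0(Y,d_Y) \le \sqrt{2}$, which is the claim.

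There is essentially no obstacle here: the corollary is a formal consequence of Theorem \ref{thm:Ptround} combined with monotonicity of $C_0$ under subspaces. Equivalently, and slightly more directly, one can note that Ptolemy's inequality and the quadrilateral inequality — being four-point conditions — are automatically inherited by any subspace, so $(X,d)$ itself is Ptolemaic and $2$-round, and Theorem \ref{thm:Ptround} applies to $(X,d)$ without the detour through $Y$. The only point deserving a moment's attention is the assertion that four-point configurations in a $\CAT(0)$-space subembed into Euclidean space, but this is standard and is already cited in the discussion preceding the statement.
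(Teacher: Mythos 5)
Your proof is correct and essentially the same as the paper's: the paper simply notes that Ptolemy's inequality and the quadrilateral inequality pass to subspaces and then applies Theorem \ref{thm:Ptround} to $(X,d)$ directly, which is precisely the "more direct" variant you describe at the end. Your main route via $C_0$-monotonicity (Proposition \ref{prop:basicstwo}(i)) is an equivalent reformulation of the same argument.
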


\begin{proof}
Since a $\CAT(0)$-space is Ptolemaic and $2$-round, so is any subspace.  The conclusion follows from Theorem~\ref{thm:Ptround}.
\end{proof}

\begin{prop}\label{prop:euclidean}
 Let $V$ be a Euclidean space and $d$ its Euclidean metric.  
If $\dim V \geq 2$ then $C(V,d) = C_0(V,d) = \sqrt{2}$.
\end{prop}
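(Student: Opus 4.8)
The plan is to establish both inequalities $C(V,d)=C_0(V,d)=\sqrt{2}$ by sandwiching. For the upper bound, note that a Euclidean space with its Euclidean metric is Ptolemaic and $2$-round (classical), so Theorem \ref{thm:Ptround} gives $C_0(V,d)\le\sqrt{2}$, and hence $C(V,d)\le C_0(V,d)\le\sqrt{2}$ by Proposition \ref{prop:basicstwo}(iii). So the entire content is the matching lower bound $C(V,d)\ge\sqrt{2}$. Since $V$ is a normed space and scaling by $\lambda>0$ multiplies all pairwise distances by $\lambda$, the space $(V,d)$ is four-point scalable in the large (Example \ref{ex:4pointscalable} with $\alpha=1$, $S=V$), so Proposition \ref{prop:4pointscalable} gives $C(V,d)=C_0(V,d)$; it therefore suffices to exhibit, for every $\varepsilon>0$, four points $x,y,z,w$ with $\Delta(x,y,z,w)\ge\sqrt{2}-\varepsilon$, or better, a single configuration realizing $\Delta=\sqrt{2}$ exactly in the limit or on the nose. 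By Proposition \ref{prop:basicstwo}(i) it is enough to do this in a $2$-dimensional subspace, so we may assume $V=\IR^2$ with the standard inner product.

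The natural configuration is the vertex set of a square. Take $x=(1,0)$, $z=(-1,0)$, $y=(0,1)$, $w=(0,-1)$. Then $xy=yz=zw=wx=\sqrt{2}$ while the diagonals are $xz=yw=2$. Plugging into \eqref{eq:Delta},
\[
\Delta(x,y,z,w)=\frac{xy+zw}{\max\{xz+yw,\;xw+yz\}}=\frac{\sqrt{2}+\sqrt{2}}{\max\{2+2,\;\sqrt{2}+\sqrt{2}\}}=\frac{2\sqrt{2}}{4}=\frac{\sqrt{2}}{2}.
\]
That gives only $\sqrt{2}/2$, so the square with this labelling is the wrong pairing; the point is to choose the labelling of the four square vertices so that the numerator $xy+zw$ picks up the two \emph{diagonals} and the denominator picks up sides. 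Relabel: let $x=(1,0)$, $y=(-1,0)$, $z=(0,1)$, $w=(0,-1)$, so that $xy=2$ (a diagonal), $zw=2$ (the other diagonal), and $xz=zy=yw=wx=\sqrt 2$. Then
\[
\Delta(x,y,z,w)=\frac{xy+zw}{\max\{xz+yw,\;xw+yz\}}=\frac{2+2}{\max\{\sqrt2+\sqrt2,\;\sqrt2+\sqrt2\}}=\frac{4}{2\sqrt2}=\sqrt2.
\]
Hence $C_0(V,d)=\sup\Delta\ge\sqrt2$ by \eqref{eq:Czero}, and combined with the upper bound this forces $C(V,d)=C_0(V,d)=\sqrt2$.

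I do not anticipate a genuine obstacle here: the upper bound is an immediate citation of Theorem \ref{thm:Ptround} plus the classical fact that Euclidean space is Ptolemaic and $2$-round, and the lower bound is a one-line evaluation of $\Delta$ on a square, the only subtlety being to label the four vertices so that the two equal diagonals land in the numerator. The reduction to $\IR^2$ via Proposition \ref{prop:basicstwo}(i) and the identification $C=C_0$ via four-point scalability (Proposition \ref{prop:4pointscalable}) are both routine invocations of results already in hand; strictly speaking one does not even need the $C=C_0$ reduction for the lower bound on $C$, since $\Delta\ge\sqrt2$ on a fixed square already shows $(V,d)$ fails the $(\mu,\delta)$-four-point inequality for any $\mu<\sqrt2$ once one scales the square up without bound (the extra $2\delta$ becomes negligible against distances that grow linearly in the scaling factor), which is exactly the mechanism of Proposition \ref{prop:basics}(ii) and Proposition \ref{prop:4pointscalable}.
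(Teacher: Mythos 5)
Your proof is correct and is essentially the paper's argument: the upper bound is Theorem \ref{thm:Ptround} (Euclidean space being Ptolemaic and $2$-round), the lower bound is $\Delta=\sqrt{2}$ on the vertices of a square with the two diagonals in the numerator (the paper uses $\Delta(u,v,0,u+v)$ for orthonormal $u,v$, which is your configuration up to a similarity), and $C=C_0$ follows from four-point scalability via Corollary \ref{cor:scalable}. No gaps.
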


\begin{proof}
By Theorem~\ref{thm:Ptround},  $C_0(V,d) \leq \sqrt{2}$.
Since $\dim V \geq 2$,  there are orthogonal unit vectors $u, v \in V$.
A calculation using the inner product of $V$ yields $\Delta(u,v,0,u+v) = \sqrt{2}$ and thus $C_0(V,d) \geq \sqrt{2}$.
Hence  $C_0(V,d) = \sqrt{2}$.
Also, by Corollary~\ref{cor:scalable}, $C(V,d) = C_0(V,d)$.
\end{proof}

Remarkably, a geodesic metric space that is $2$-round is necessarily a $\CAT(0)$-space,
\cite{BergNik, Sato}
and so Corollary~\ref{cor:CATzero} yields the following proposition.

\begin{prop}\label{prop:tworound}
Let $(X,d)$ be a geodesic metric space.
If $(X,d)$ is $2$-round then $C_0(X,d) \leq \sqrt{2}$. \qed
\end{prop}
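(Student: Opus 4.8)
The plan is to reduce Proposition~\ref{prop:tworound} to Corollary~\ref{cor:CATzero} by appealing to the known characterization of $\CAT(0)$-spaces among geodesic metric spaces via the quadrilateral inequality. Concretely, I would argue in two steps.

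First, I would invoke the theorem of Berg--Nikolaev and Sato, \cite{BergNik, Sato}, which asserts that a geodesic metric space satisfying the quadrilateral inequality --- equivalently, a geodesic $2$-round space --- is a $\CAT(0)$-space. This is the one substantive input; the geodesic hypothesis in the statement is used precisely to bring this result into play, since a general (non-geodesic) $2$-round space need not be $\CAT(0)$.

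Second, having identified $(X,d)$ as a $\CAT(0)$-space, I would apply Corollary~\ref{cor:CATzero}, regarding $X$ as a subspace of itself, to conclude $C_0(X,d)\le\sqrt{2}$. Equivalently, one may bypass the corollary and argue directly: a $\CAT(0)$-space is Ptolemaic and $2$-round (as already noted in the excerpt, via the subembedding of any four points into Euclidean space, \cite[page 164]{Bridson-Haefliger}), so Theorem~\ref{thm:Ptround} applies and gives $C_0(X,d)\le\sqrt{2}$.

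The main obstacle is entirely contained in the first step: the implication ``geodesic $+$ $2$-round $\Rightarrow$ $\CAT(0)$'' is the nontrivial Berg--Nikolaev/Sato theorem, which I would cite rather than reprove. Everything downstream --- from $\CAT(0)$ to the $\sqrt{2}$-bound --- is already established in the excerpt (Theorem~\ref{thm:Ptround}, Corollary~\ref{cor:CATzero}) and is routine. I would note that one cannot shortcut the argument by using $2$-roundness alone in Theorem~\ref{thm:Ptround}: that theorem requires Ptolemy's inequality in addition to the quadrilateral inequality, and it is exactly the passage through $\CAT(0)$ that supplies the Ptolemaic property here.
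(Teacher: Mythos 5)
Your proposal is correct and matches the paper's own argument exactly: the paper notes, immediately before the proposition, that a geodesic $2$-round space is a $\CAT(0)$-space by Berg--Nikolaev and Sato, and then applies Corollary~\ref{cor:CATzero}. Your remark that $2$-roundness alone does not suffice for Theorem~\ref{thm:Ptround} (the Ptolemaic property comes from the passage through $\CAT(0)$) is a correct and worthwhile clarification.
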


\begin{rem} \label{rem:snowflake}
Let $(X,d)$ be any metric space.
Blumenthal \cite[Theorem 52.1]{Blumenthal} showed that if 
$0<\alpha\le 1/2$ then the $\alpha$-snowflake $(X,d^\alpha)$   
has the property that any four points in it can be isometrically embedded into Euclidean space. 
Hence, in the case  $0<\alpha\le 1/2$,  $(X,d^\alpha)$ is Ptolemaic and $2$-round and so Theorem~\ref{thm:Ptround} implies that $C_0(X,d^\alpha) \leq \sqrt{2}$. An improvement and extension of this estimate is given by \hbox{Theorem~\ref{thm:snowflake}.}
\end{rem}


\section{Banach spaces}\label{sec:Banach}

In contrast to a $\CAT(0)$-space, whose quadrilateral constant is  bounded from above by $\sqrt{2}$,
the quadrilateral constant of
a Banach space $B$ of dimension greater that one is bounded from {\it below} by $\sqrt{2}$
with equality holding, assuming that the dimension of $B$ is at least three,  only when $B$ is a Hilbert space,
see Theorem~\ref{thm:banachestimate}.
This is a consequence of strong results for the James constant of $B$ due to Gao and Lau, \cite{Gao-Lau}, and to
Komuro, Saito and Tanaka, \cite{Komuro-Saito}.
Enflo \cite{Enflo1969} introduced the notion of the
{\it roundness} of a metric space.
We show, Theorem~\ref{thm:round}, that if
$B$ is a Banach space with roundness $r(B)$ then
its quadrilateral  constant is bounded from above by 
$2^{1/r(B)}$ and use this to show that the 
quadrilateral  constant of a non-trivial $L^p$-space, where $1 \leq p \leq \infty$, is $\max\{2^{1/p},2^{1-1/p}\}$,
see Corollary~\ref{cor:lp}.

Let  $B=(V,\| \cdot \|)$ be a real Banach space.
The norm of $B$,  $\| \cdot \|$, yields a metric $d(u,v) = \| u -v \|$ on the real vector space $V$ and we use notation $C(B)$ for $C_0(V,d)$.
Note that by  Corollary~\ref{cor:scalable} we have $C(V,d) = C_0(V,d) = C(B)$, that is, the quasi-hyperbolicity constant and the quadrilateral constant of $(V,d)$ coincide.

Let $1\le p \leq \infty$. 
Recall the {\it $p$-norm} on $\IR^n$, denoted by $\|x\|_p$ for $x=(x_1,\ldots,x_n)\in \IR^n$, is given by
\begin{eqnarray*}
\|x\|_p = \left\{               
        \begin{array}{cl}
                  \left(|x_1|^p+\cdots+|x_n|^p\right)^{1/p} & \mbox{if } 1 \leq p  < \infty, \\
                  & \\
                  \max\left\{|x_1|,\ldots,|x_n|\right\} & \mbox{if } p= \infty.
        \end{array}\right.
\end{eqnarray*}
We write $\ell_p^n = \left(\IR^n, \|\cdot\|_p\right)$ and $d_p (u,v) =  \|u -v\|_p$.
The $p$-norms on $\IR^n$ are related by the following well-known inequality.
If $1\le p\le q \leq \infty$ then for all $x \in \IR^n$
\begin{equation}\label{eq:wellknown}
\|x\|_q ~\leq~ \|x\|_p ~\leq~ n^{1/p-1/q}\|x\|_q
\end{equation}
where, by convention, $1/\infty = 0$.


Note that $\ell^n_2$ is a Euclidean space and so by Proposition~\ref{prop:euclidean},  $C(\ell^n_2) = \sqrt{2}$  for $n\geq2$.

\begin{prop}\label{prop:ell2p}
$C(\ell^2_p)=2^{1/p}$ if $1\le p\le 2$ and $C(\ell^2_p)=2^{1-1/p}$ if $1<p\le \infty$.  
\end{prop}

\begin{proof}
If $1\le p\le 2$ then by~\eqref{eq:wellknown},
$
\|x\|_2 ~\leq~ \|x\|_p ~\leq~  2^{1/p-1/2}\|x\|_2
$.
By Proposition~\ref{prop:quasi_bound}, 
\[
C(\ell^2_p) ~\leq~ 2^{1/p-1/2}\, C(\ell_2^2) ~=~ 2^{1/p}.
\]
Observe
$\Delta((-1,1), \, (1, -1),  \,  (-1,-1),  \, (1,1))= 2^{1/p}$ and so
$C(\ell^2_p) \geq 2^{1/p}$. Thus 
$C(\ell^2_p) = 2^{1/p}$.

If $2\le p \leq \infty$ then by~\eqref{eq:wellknown},
$
2^{1/p-1/2}\|x\|_2 ~\leq~  \|x\|_p ~\leq~  \|x\|_2
$.
By Proposition~\ref{prop:quasi_bound}, 
\[
C(\ell^2_p) ~\leq~ 2^{1/2-1/p}\, C(\ell_2^2) ~=~ 2^{1-1/p}.
\]
Observe
$\Delta((0,1), \, (0, -1),  \,  (-1,0),  \, (1,0))= 2^{1-1/p}$ and so
$C(\ell^2_p) \geq 2^{1-1/p}$. Thus 
 $C(\ell^2_p) = 2^{1-1/p}$.
\end{proof}

Proposition~\ref{prop:ell2p} generalizes to non-trivial $L_p$-spaces, see Corollary~\ref{cor:lp}.

The {\it Banach-Mazur distance} between two isomorphic Banach spaces $E$ and $F$ is defined by
\[
d_{\rm BM}(E,F) = \inf\{ \| T \| \, \|T^{-1} \|  ~|~  \text{$T\colon E \rightarrow F$ is an isomorphism}\}.
\]
For example, if $1\leq p \leq q \leq 2$ or $2 \leq p \leq q \leq \infty$ then
$d_{\rm BM}(\ell^n_p,\ell^n_q) = n^{1/p-1/q}$, \cite[Proposition 37.6]{TJ}.
Proposition~\ref{prop:quasi_bound} yields the following comparison.

\begin{prop}\label{prop:bmdistance}
If $E$ and $F$ are isomorphic Banach spaces
then 
$\,C(E) \, \leq \, d_{\rm BM}(E,F) \,C(F)$. \qed
\end{prop}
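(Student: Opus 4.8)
The plan is to deduce this directly from the quasi-isometry bound in Proposition~\ref{prop:quasi_bound}(i), exploiting the fact that for Banach spaces the restricted and unrestricted constants coincide, so there is no loss of information in passing to $C(\cdot)$ rather than $C_0(\cdot)$.

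First I would recall that an isomorphism $T\colon E \to F$ of Banach spaces is, by boundedness of $T$ and $T^{-1}$, a linear bilipschitz homeomorphism: for all $u,v \in E$ we have $\|T^{-1}\|^{-1}\,\|u-v\|_E \le \|Tu - Tv\|_F \le \|T\|\,\|u-v\|_E$, so $T$ is a $(\|T^{-1}\|^{-1},\,\|T\|)$-bilipschitz embedding, i.e. a $((\,\|T^{-1}\|^{-1},0),(\|T\|,0))$-quasi-isometric embedding in the terminology of Definition~\ref{def:quasiembedding}. Applying Proposition~\ref{prop:quasi_bound}(i) with $C_1 = \|T^{-1}\|^{-1}$ and $C_2 = \|T\|$ gives
\[
C(E) \;\le\; \frac{C_2}{C_1}\,C(F) \;=\; \|T\|\,\|T^{-1}\|\,C(F).
\]
Now I would take the infimum over all isomorphisms $T\colon E \to F$ on the right-hand side; by the definition of the Banach–Mazur distance this yields $C(E) \le d_{\rm BM}(E,F)\,C(F)$, which is exactly the claimed inequality. (Here I am using that $d_{\rm BM}$ is realized as an infimum and that $C(F)$ is a fixed nonnegative number independent of $T$.)

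There is really no main obstacle here — the statement is a one-line corollary of Proposition~\ref{prop:quasi_bound} — but the one point that deserves a sentence of care is the direction of the inequality: one must check that $T$ being an isomorphism means $T^{-1}$ is also bounded (automatic for bounded bijections between Banach spaces by the open mapping theorem, but here $T^{-1}$ is bounded by hypothesis since $\|T^{-1}\|$ appears in the definition of $d_{\rm BM}$), and that the roles of $C_1, C_2$ are assigned so that $C_2/C_1 = \|T\|\|T^{-1}\|$ rather than its reciprocal. Since the inequality is symmetric in $E$ and $F$ only up to the asymmetry of $d_{\rm BM}$ (which itself satisfies $d_{\rm BM}(E,F) = d_{\rm BM}(F,E)$), either assignment ultimately gives the stated bound after taking the infimum, so no subtlety remains.
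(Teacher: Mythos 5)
Your proof is correct and is exactly the argument the paper intends: the isomorphism $T$ is a $\left(\|T^{-1}\|^{-1},\|T\|\right)$-bilipschitz embedding, Proposition~\ref{prop:quasi_bound}(i) gives $C(E)\le \|T\|\,\|T^{-1}\|\,C(F)$, and taking the infimum over $T$ yields the claim (the paper marks this as immediate from Proposition~\ref{prop:quasi_bound}). Your closing aside that ``either assignment'' of $C_1,C_2$ would work is not right --- the reciprocal assignment would produce a useless bound --- but it is harmless since your main computation assigns them correctly.
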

Because of Theorem~\ref{thm:banachestimate} below,
the inequality of Proposition~\ref{prop:bmdistance} can only give useful information when
$d_{\rm BM}(E,F) < \sqrt{2}$.

Since, up to a translation, any four points of a Banach space lie in some subspace of dimension at most three,
\begin{equation} \label{eq:subspace}
C(B) = \sup\{ C(V) ~|~ \text{$V$ is a subspace of $B$ with $\dim V \leq 3$}\}.
\end{equation}
A Banach space $B$ is {\it finitely representable} in another Banach space $B'$ if 
for every finite dimensional subspace $F$ of $B$ and every $\varepsilon >0$
there is a subspace $F'$ of $B'$ and an isomorphism $T \colon F \rightarrow F'$
such that $\| T \| \, \|T^{-1} \| \leq 1 + \varepsilon$.
\begin{prop}\label{prop:finiterep}
If $B$ is finitely representable in $B'$ then $C(B) \leq C(B')$.
\end{prop}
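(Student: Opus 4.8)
The plan is to reduce everything to finite-dimensional subspaces via \eqref{eq:subspace} and then invoke the Banach–Mazur comparison of Proposition \ref{prop:bmdistance}. First I would fix an arbitrary finite-dimensional subspace $V$ of $B$ with $\dim V \leq 3$; by \eqref{eq:subspace} it suffices to show $C(V) \leq C(B')$ for every such $V$, since $C(B)$ is the supremum of the $C(V)$.

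Next, given $\varepsilon > 0$, apply the definition of finite representability to the finite-dimensional subspace $V$: there is a subspace $V'$ of $B'$ and an isomorphism $T \colon V \to V'$ with $\|T\|\,\|T^{-1}\| \leq 1 + \varepsilon$. By definition of the Banach–Mazur distance this gives $d_{\rm BM}(V, V') \leq 1 + \varepsilon$, so Proposition \ref{prop:bmdistance} yields $C(V) \leq (1+\varepsilon)\, C(V')$. Since $V'$ is a subspace of $B'$, Proposition \ref{prop:basicstwo}(i) gives $C(V') \leq C(B')$, whence $C(V) \leq (1+\varepsilon)\, C(B')$.

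Finally, letting $\varepsilon \to 0^{+}$ gives $C(V) \leq C(B')$ for every finite-dimensional subspace $V$ of $B$ with $\dim V \leq 3$, and taking the supremum over all such $V$ and applying \eqref{eq:subspace} gives $C(B) = \sup_V C(V) \leq C(B')$, as desired.

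This argument is essentially a bookkeeping exercise once \eqref{eq:subspace}, Proposition \ref{prop:bmdistance}, and subspace monotonicity are in hand, so I do not anticipate any genuine obstacle; the only point requiring a moment's care is making sure the finite-dimensional reduction in \eqref{eq:subspace} (dimension at most three) matches the ``every finite dimensional subspace'' clause in the definition of finite representability, which it does.
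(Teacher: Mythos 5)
Your proposal is correct and follows essentially the same route as the paper: reduce to subspaces of dimension at most three via \eqref{eq:subspace}, use finite representability to get an almost-isometric copy in $B'$, and pass to the limit $\varepsilon \to 0^{+}$. The only cosmetic difference is that you route the comparison through Proposition \ref{prop:bmdistance} while the paper cites Proposition \ref{prop:quasi_bound} directly, but these are the same estimate.
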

\begin{proof}
Let $\varepsilon >0$.
Let $V$ be a subspace of $B$ with $\dim V \leq 3$.
Since $B$ is finitely representable in $B'$,
there exists a subspace $V'$ of $B'$ and an isomorphism
$T \colon V \rightarrow V'$
such that $\| T \| \, \|T^{-1} \| \leq 1 + \varepsilon$.
By Proposition~\ref{prop:quasi_bound},  $C(V) \leq (1 + \varepsilon) \, C(V')$
and so $C(V) \leq (1 + \varepsilon) \, C(B')$ because $C(V') \leq C(B')$.
It follows from~\eqref{eq:subspace} that $C(B) \leq (1 + \varepsilon) \, C(B')$.
Since $\varepsilon$ is arbitrary, we conclude $C(B) \leq C(B')$.
\end{proof}

\begin{cor} \label{cor:seconddual}
Let $B$ be a Banach space and $B^{**}$ its second dual. 
Then 
$C(B) = C(B^{**})$.
\end{cor}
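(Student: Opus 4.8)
The plan is to sandwich $C(B^{**})$ between two applications of Proposition~\ref{prop:finiterep}, using the fact that each of $B$ and $B^{**}$ is finitely representable in the other.

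First I would note that the canonical embedding $\iota\colon B\to B^{**}$ is an isometry onto its image, so every finite dimensional subspace of $B$ is isometric to a subspace of $B^{**}$; in particular $B$ is finitely representable in $B^{**}$ (with $\varepsilon=0$). By Proposition~\ref{prop:finiterep} this already gives $C(B)\le C(B^{**})$.

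For the reverse inequality I would invoke the principle of local reflexivity of Lindenstrauss and Rosenthal: for every finite dimensional subspace $F$ of $B^{**}$ and every $\varepsilon>0$ there is an injective linear map $T\colon F\to B$ with $\|T\|\,\|T^{-1}\|\le 1+\varepsilon$. This is exactly the assertion that $B^{**}$ is finitely representable in $B$, so Proposition~\ref{prop:finiterep} yields $C(B^{**})\le C(B)$. Combining the two inequalities gives $C(B)=C(B^{**})$.

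The only input beyond the machinery already assembled in \S\ref{sec:Banach} is the principle of local reflexivity, and citing it precisely is the main point of the argument; everything else is formal. In fact one could get by with only its finite dimensional instances, since by \eqref{eq:subspace} merely the subspaces of dimension at most three are relevant to the computation of $C(B)$ and $C(B^{**})$.
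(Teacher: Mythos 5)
Your proof is correct and follows the same route as the paper: the isometric canonical embedding gives $C(B)\le C(B^{**})$, and the reverse inequality comes from the fact that $B^{**}$ is finitely representable in $B$ (the principle of local reflexivity, which the paper cites from \cite[\S 9]{Johnson-Lindenstrauss}) together with Proposition~\ref{prop:finiterep}. Naming Lindenstrauss--Rosenthal explicitly is a nice touch but does not change the argument.
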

\begin{proof}
The canonical map $B \rightarrow B^{**}$ is an isometric embedding and hence $C(B) \leq C(B^{**})$.
In any Banach space $B$, the second dual $B^{**}$ is finitely representable in in $B$, \cite[\S 9]{Johnson-Lindenstrauss},
and so by Proposition~\ref{prop:finiterep},  $C(B^{**}) \leq C(B)$.
It follows that $C(B) = C(B^{**})$.
\end{proof}

The {\it James constant} of a Banach space $B$ is defined by:
\[
J(B) =  \sup\{ \min( \| x -y \|, \| x + y \| )   ~|~ \,  \| x \| =  \| y \|  = 1\}
\]
If $\|x\| = \|y\| = 1$ then $\Delta(x,y,0,x+y) = \tfrac{1}{2} ( \| x -y \|  + \| x + y \| ) $ and thus
\begin{equation} \label{eq:James}
C(B) \geq  \sup \{ \tfrac{1}{2} ( \| x -y \|  + \| x + y \| )   ~|~ \,  \| x \| =  \| y \|  = 1\}   \geq  J(B)
\end{equation}
A Banach space $B$ is said to be {\it non-trivial} if $\dim(B) \geq 2$.

\begin{thm}\label{thm:banachestimate}
If $B$ is any non-trivial  Banach space then $C(B)  \ge\sqrt{2}$.
If $\dim B \geq 3$ and $C(B) = \sqrt{2}$ then $B$ is a Hilbert space.
\end{thm}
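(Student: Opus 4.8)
The plan is to combine the elementary lower bound $C(B) \geq J(B)$ recorded in \eqref{eq:James} with two deep facts about the James constant that are available in the literature. The first input is the universal lower bound for the James constant: for every Banach space $B$ with $\dim B \geq 2$ one has $J(B) \geq \sqrt{2}$, due to Gao and Lau \cite{Gao-Lau}. The second input is the rigidity statement of Komuro, Saito and Tanaka \cite{Komuro-Saito}: if $\dim B \geq 3$ and $J(B) = \sqrt{2}$, then $B$ is a Hilbert space. (The restriction $\dim B \geq 3$ cannot be dropped, since there are two-dimensional non-Euclidean norms with $J = \sqrt{2}$; this is exactly why the second assertion of the theorem is stated only for $\dim B \geq 3$.)

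Granting these two results, the argument is short. For the first assertion, \eqref{eq:James} gives $C(B) \geq J(B) \geq \sqrt{2}$ for every non-trivial $B$. For the second assertion, suppose $\dim B \geq 3$ and $C(B) = \sqrt{2}$. Then \eqref{eq:James} yields $J(B) \leq C(B) = \sqrt{2}$, while the Gao--Lau bound gives $J(B) \geq \sqrt{2}$; hence $J(B) = \sqrt{2}$, and the Komuro--Saito--Tanaka theorem forces $B$ to be a Hilbert space.

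I do not expect a genuine obstacle: once the correct statements are extracted from \cite{Gao-Lau} and \cite{Komuro-Saito}, there is no computation to perform. The one point to handle carefully is that $C(B) = \sqrt{2}$ is a priori a stronger hypothesis than $J(B) = \sqrt{2}$, since $C(B)$ is the supremum of $\Delta(x,y,z,w)$ over all quadruples of points rather than only over the special quadruples $(x,y,0,x+y)$ with $\|x\| = \|y\| = 1$ used in \eqref{eq:James}; but this only strengthens the chain of inequalities above, so it causes no trouble. It is also worth noting, for consistency, that Proposition \ref{prop:euclidean} together with \eqref{eq:subspace} shows conversely that every Hilbert space of dimension at least two satisfies $C(B) = \sqrt{2}$, although this converse is not part of the statement to be proved.
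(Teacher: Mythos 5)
Your proof is correct and is essentially the same as the paper's: both deduce $C(B)\ge J(B)\ge\sqrt{2}$ from \eqref{eq:James} and the Gao--Lau bound, and both obtain the rigidity statement by squeezing $J(B)=\sqrt{2}$ and invoking Komuro--Saito--Tanaka. Your remark that $C(B)=\sqrt{2}$ forces $J(B)=\sqrt{2}$ via the chain of inequalities is exactly the step the paper leaves implicit.
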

\begin{proof}
Gao and Lau, \cite[Theorem 2.5]{Gao-Lau}, show  $J(B) \geq \sqrt{2}$ for any non-trivial Banach space $B$.
Furthermore, Komuro, Saito and Tanaka, \cite{Komuro-Saito}, show that $\dim B \geq 3$ and $J(B) = \sqrt{2}$ implies $B$ is a Hilbert space.
The conclusion of the theorem follows from~\eqref{eq:James}.
\end{proof}

\begin{defi}[\cite{Enflo1969}]\label{def:round}
Let $(X,d)$ be a metric space and $p\geq1$.
The space $(X,d)$ is said to be {\it $p$-round}
if for all $x,y,z,w \in X$,
$
(xy)^p+(zw)^p \leq (xz)^p+(yw)^p+(xw)^p+(yz)^p.
$
The {\it roundness} of $(X,d)$ is
$r(X,d) = \sup\{ p ~|~  \text{$(X,d)$ is $p$-round}\, \}$.
 \end{defi}
Note that if $r(X,d)<\infty$ then the supremum is attained.  
Enflo, \cite{Enflo1969}, observed that $r(X,d) \geq 1$ and that if  $(X,d)$ has the {\it midpoint property}\footnote{A metric space $(X,d)$ has the {\it midpoint property} 
if for every $x,y \in X$ there exists $z \in X$ such that $d(x,z)=d(z,y)= \tfrac{1}{2}d(x,y)$.}
then $r(X,d) \leq 2$. In particular, if $B$ is a Banach space then $1 \leq r(B) \leq 2$, where $r(B)$ is the roundness of $B$ as a metric space.

\begin{lemma}\label{lem:round}
 Let $B$ be a Banach space that is $p$-round.  Then for any vectors  $e, f \in B$
\[
 \left(\| e \|  + \| f  \|\right)^p \leq  \| e -f \|^p  + \| e+ f \|^p.
 \]
 \end{lemma}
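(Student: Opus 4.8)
The plan is to derive the inequality from a single application of the defining relation of $p$-roundness (Definition~\ref{def:round}) to a carefully chosen quadruple of points, followed by an elementary convexity estimate.

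First I would apply the $p$-round inequality to the four points $x=e$, $y=-e$, $z=f$, $w=-f$ of $B$. Their pairwise distances are $\|x-y\|=2\|e\|$, $\|z-w\|=2\|f\|$, $\|x-z\|=\|y-w\|=\|e-f\|$, and $\|x-w\|=\|y-z\|=\|e+f\|$. Substituting these into $\|x-y\|^p+\|z-w\|^p\le\|x-z\|^p+\|y-w\|^p+\|x-w\|^p+\|y-z\|^p$ gives
\[
(2\|e\|)^p+(2\|f\|)^p\le 2\|e-f\|^p+2\|e+f\|^p,
\]
that is, $2^{p-1}\bigl(\|e\|^p+\|f\|^p\bigr)\le\|e-f\|^p+\|e+f\|^p$.

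Second, since $p\ge 1$ the map $t\mapsto t^p$ is convex on $[0,\infty)$, so
\[
\Bigl(\tfrac{\|e\|+\|f\|}{2}\Bigr)^p\le\tfrac12\|e\|^p+\tfrac12\|f\|^p,
\]
i.e. $(\|e\|+\|f\|)^p\le 2^{p-1}\bigl(\|e\|^p+\|f\|^p\bigr)$. Combining the two displays yields $(\|e\|+\|f\|)^p\le\|e-f\|^p+\|e+f\|^p$, as required.

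There is no real obstacle once one hits on the symmetric configuration $e,-e,f,-f$; the point worth flagging is that the ``obvious'' parallelogram $0,e,e+f,f$ only produces the weaker (and, for the present purpose, useless) relation $\|e-f\|^p+\|e+f\|^p\le 2\|e\|^p+2\|f\|^p$, with the inequality pointing the wrong way. Note also that the $p$-round inequality is quantified over all four-tuples of points with no distinctness requirement, so the degenerate cases $e=0$, $f=0$, or $e=\pm f$ need no separate treatment, and that in a Hilbert space with $p=2$ both steps above are equalities precisely when $\|e\|=\|f\|$.
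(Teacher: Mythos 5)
Your proof is correct and is essentially the paper's own argument: your quadruple $e,-e,f,-f$ is just a translate (by $-e$) of the paper's quadruple $e+f,\,e-f,\,2e,\,0$, so both yield the same intermediate inequality $2^{p-1}(\|e\|^p+\|f\|^p)\le\|e-f\|^p+\|e+f\|^p$, and your convexity step is the same power-mean inequality the paper invokes via its $p$-norm comparison \eqref{eq:wellknown} with $n=2$.
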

 \begin{proof}
In the ``$p$-round inequality'' of Definition~\ref{def:round},  letting $x = e + f$, $y= e -f$, $w = 2e$, and $z=0$ gives
\[
\| 2e \|^p  + \|2 f  \|^p \leq  2\| e -f \|^p  +2\| e+ f \|^p
\]
and so
\[
2^{p-1} \left(\| e \|^p  + \|f  \|^p\right) \leq  \| e -f \|^p  + \| e+ f \|^p.
\]
By~\eqref{eq:wellknown}, with $n=2$, $ \left(\| e \|  + \| f  \|\right)^p  \leq 2^{p-1} \left(\| e \|^p  + \|f  \|^p\right)$ from which the conclusion follows.
 \end{proof}

 \begin{thm} \label{thm:round}
If $B$ is a Banach space then $C(B) \leq 2^{1/r(B)}$.
\end{thm}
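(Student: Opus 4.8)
The plan is to reduce to the restricted quasi-hyperbolicity constant and then apply Lemma \ref{lem:round} to a well-chosen pair of vectors. Write $r = r(B)$. Since $B$ is a Banach space we have $1 \le r \le 2 < \infty$, so the supremum defining $r(B)$ is attained and $B$ is $r$-round. By Corollary \ref{cor:scalable}, $C(B) = C_0(B)$, so it suffices to check that $B$ satisfies the $(2^{1/r},0)$-four-point inequality, that is, $xy + zw \le 2^{1/r}\max\{xz+yw,\,xw+yz\}$ for all $x,y,z,w \in B$.

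Fix $x,y,z,w \in B$. The key step is to apply Lemma \ref{lem:round} with $e = x-y$ and $f = w-z$. Then $\|e\| + \|f\| = xy + zw$, while $e+f = (x-z) - (y-w)$ and $e-f = (x-w) - (y-z)$, so the triangle inequality gives $\|e+f\| \le xz + yw$ and $\|e-f\| \le xw + yz$. Since $t \mapsto t^r$ is increasing on $[0,\infty)$, Lemma \ref{lem:round} then yields
\[
(xy+zw)^r \;\le\; \|e-f\|^r + \|e+f\|^r \;\le\; (xw+yz)^r + (xz+yw)^r \;\le\; 2\max\{xz+yw,\,xw+yz\}^r.
\]
Taking $r$-th roots gives $xy + zw \le 2^{1/r}\max\{xz+yw,\,xw+yz\}$, hence $C_0(B) \le 2^{1/r}$, and therefore $C(B) = C_0(B) \le 2^{1/r(B)}$.

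I expect no serious obstacle here beyond spotting the substitution $e = x-y$, $f = w-z$: this is exactly what matches the "one pair of opposite sides / two diagonals and the other pair of opposite sides" quantities occurring in $\Delta(x,y,z,w)$ to the $\|e\pm f\|$ pattern of Lemma \ref{lem:round}. The remaining ingredients are the triangle inequality, monotonicity of $t\mapsto t^r$, and the elementary bound $a^r + b^r \le 2\max\{a,b\}^r$ for $a,b\ge 0$. One should also note explicitly that $r(B)$ being finite is what lets us use that $B$ is genuinely $r(B)$-round rather than merely $p$-round for all $p < r(B)$.
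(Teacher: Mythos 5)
Your proof is correct and is essentially the paper's argument: the paper also applies Lemma \ref{lem:round} with $e=y-x$, $f=w-z$ (the same pair up to sign), bounds $\|e\pm f\|$ by the triangle inequality against $xz+yw$ and $xw+yz$, and finishes with $a^p+b^p\le 2\max\{a,b\}^p$. The only cosmetic difference is that the paper names the intermediate vectors $a,b,c,d$ explicitly and concludes directly from the $(2^{1/p},0)$-four-point inequality rather than invoking $C(B)=C_0(B)$.
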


\begin{proof}
Let $p = r(B)$. Then $B$ is $p$-round.
Let  $x,y,z,w \in B$.
Let $a = x -z$, $b=w-y$, $c=w-x$, $d= y-z$, $e = y - x$, and $f= w -z$.
Note that $f = a + c = b+d$ and $e = d -a = c -b$.
Hence $e + f =c + d $ and $f - e = a  +b$.
By Lemma~\ref{lem:round},
\begin{eqnarray*}
  \left(\| e \|  + \| f  \|\right)^p ~\leq   &  \| e -f \|^p  + \| e+ f \|^p  & \null \\
        ~ =  &   \| a + b \|^p  + \| c+ d \|^p & \null  \\
       ~ \leq   & \left( \| a\|  + \|b \|\right)^p   +\left( \| c\|  + \|d \|\right)^p  &\text{  (by triangle inequality).}
\end{eqnarray*}
It follows that
\begin{eqnarray*}
     \| e \|  + \| f  \| ~\leq   &   \left( \left( \| a\|  + \|b \|\right)^p   +\left( \| c\|  + \|d \|\right)^p \right)^{1/p}   & \null  \\
                            ~\leq   &  2^{1/p} \max\left( \| a\|  + \|b \|, \, \| c\|  + \|d \| \right)  & \text{ (by~\eqref{eq:wellknown}).}
\end{eqnarray*}
Thus the 
$(2^{1/p} , 0)$-four-point inequality holds and so 
$C(B) \leq 2^{1/p} $.
\end{proof}

\begin{cor} \label{cor:lp}
Let $(\Omega, \Sigma, \mu)$ be a separable measure space, 
that is, the $\sigma$-algebra $\Sigma$ is generated by a countable collection of subsets of $\Omega$.
Let $1 \leq p \leq \infty$ and
let  $L_p(\Omega, \Sigma, \mu)$ be the corresponding $L_p$-space.
If $ \dim L_p(\Omega, \Sigma, \mu) \geq 2$ 
then $C(L_p(\Omega, \Sigma, \mu)) =  2^{1/p}$ if $1\le p\le 2$ and $C(L_p(\Omega, \Sigma, \mu)) = 2^{1-1/p}$ if $2\le p\le \infty$.  
\end{cor}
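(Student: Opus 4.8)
The plan is to bound $C\big(L_p(\Omega,\Sigma,\mu)\big)$ above and below by $M=\max\{2^{1/p},2^{1-1/p}\}$. The lower bound will come from exhibiting an isometric copy of $\ell^2_p$ inside $L_p(\Omega,\Sigma,\mu)$ and combining monotonicity of $C$ under passage to subspaces with the known value of $C(\ell^2_p)$; the upper bound will come from Theorem \ref{thm:round} once the roundness of $L_p$ is identified.

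For the lower bound, I would first note that $\dim L_p(\Omega,\Sigma,\mu)\ge 2$ forces the existence of two disjoint measurable sets $A$ and $B$ with $0<\mu(A),\mu(B)<\infty$ (when $p=\infty$, merely $0<\mu(A),\mu(B)$): otherwise the measure algebra would reduce to a single atom and $L_p$ would be one dimensional. This is where separability can be invoked to keep the set-theoretic bookkeeping routine. Put $u=\mu(A)^{-1/p}\mathbf 1_A$ and $v=\mu(B)^{-1/p}\mathbf 1_B$ when $p<\infty$, and $u=\mathbf 1_A$, $v=\mathbf 1_B$ when $p=\infty$. Since $u$ and $v$ have disjoint supports, $\|su+tv\|_p=\|(s,t)\|_p$ for all real $s,t$, so $\mathrm{span}\{u,v\}$ with its inherited metric is isometric to $\ell^2_p$. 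Then $C(\ell^2_p)\le C(L_p)$ by Proposition \ref{prop:basicstwo}(i), while $C(\ell^2_p)=M$ by Proposition \ref{prop:ell2p}, giving $C(L_p)\ge M$.

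For the upper bound, I would use the classical fact, essentially due to Enflo \cite{Enflo1969} and equivalent to Clarkson's inequalities, that $L_p(\Omega,\Sigma,\mu)$ is $p$-round for $1\le p\le 2$ and $\tfrac{p}{p-1}$-round for $2\le p\le\infty$ (with the convention $\tfrac{p}{p-1}=1$ at $p=\infty$); in particular the roundness satisfies $r\big(L_p\big)\ge\min\{p,\tfrac{p}{p-1}\}$. A direct check, splitting into the cases $p\le 2$ and $p\ge 2$, gives the arithmetic identity $2^{1/\min\{p,\,p/(p-1)\}}=M$. Hence Theorem \ref{thm:round} yields $C(L_p)\le 2^{1/r(L_p)}\le M$, and combined with the lower bound this gives $C(L_p)=M$, as claimed. (At $p=2$ both bounds read $\sqrt 2$, consistent with $L_2$ being a Hilbert space and Proposition \ref{prop:euclidean}; in fact the two inequalities together pin down $r(L_p)=\min\{p,p/(p-1)\}$.)

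The only substantive ingredient — and thus the main obstacle — is identifying the roundness of $L_p$, particularly for $p>2$, where the governing exponent is the conjugate $\tfrac{p}{p-1}$ rather than $p$. I would settle this either by citing Enflo's computation directly, or, if a self-contained argument is wanted, by reducing the required $\tfrac{p}{p-1}$-round inequality for four points $x,y,z,w\in L_p$ to Clarkson's inequality via the substitution $a=x-z$, $b=w-y$, $c=w-x$, $d=y-z$ already used in Lemma \ref{lem:round} and the proof of Theorem \ref{thm:round}. Everything else is bookkeeping: subspace-monotonicity of $C$ (Proposition \ref{prop:basicstwo}(i)), the value $C(\ell^2_p)=M$ (Proposition \ref{prop:ell2p}), and the elementary identity $2^{1/\min\{p,\,p/(p-1)\}}=\max\{2^{1/p},2^{1-1/p}\}$.
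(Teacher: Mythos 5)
Your proposal is correct, and it diverges from the paper's argument in the lower bound. For the upper bound you do exactly what the paper does: invoke the roundness of $L_p$ (equal to $p$ for $1\le p\le 2$ by Enflo, and to $p/(p-1)$ for $2\le p\le\infty$, which the paper takes from Lennard--Tonge--Weston rather than deriving from Clarkson) and apply Theorem \ref{thm:round}; as you note, only the lower bound $r(L_p)\ge\min\{p,\,p/(p-1)\}$ is needed there, since the proof of Theorem \ref{thm:round} gives $C(B)\le 2^{1/q}$ for any $q$ at which $B$ is $q$-round. For the lower bound, however, the paper invokes the full classification of separable $L_p$-spaces (the list $\ell^n_p$, $\ell_p$, $L_p(0,1)$, $\ell_p\oplus_p L_p(0,1)$, $\ell^n_p\oplus_p L_p(0,1)$) and observes that each entry contains an isometric copy of $\ell^2_p$, whereas you build that copy by hand from two disjointly supported normalized indicator functions, using only that $\dim L_p\ge 2$ forces two disjoint sets of positive (finite, when $p<\infty$) measure. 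Your route is more elementary and in fact shows that separability is not needed for the conclusion (the paper uses it only to access the classification theorem), at the modest cost of the measure-theoretic lemma that a non-one-dimensional $L_p$ admits such a disjoint pair, which you correctly sketch. Both arguments then finish identically via subspace monotonicity (Proposition \ref{prop:basicstwo}(i)) and $C(\ell^2_p)=\max\{2^{1/p},2^{1-1/p}\}$ from Proposition \ref{prop:ell2p}.
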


\begin{proof}
Denote $B =  L_p(\Omega, \Sigma, \mu)$.
Assume $\dim B \geq 2$.
In the case $1 \leq p \leq 2$, Enflo, \cite{Enflo1969}, showed that $r(B) = p$
and so $C(B) \leq 2^{1/p}$ by Theorem~\ref{thm:round}.
In the case $2 \leq p \leq \infty$, by \cite[Proposition1.4 and Remark 1.5]{LTW},
$r(B) = 1/(1 - 1/p)$
and so $C(B) \leq 2^{1- 1/p}$ by Theorem~\ref{thm:round}.
Hence for $1 \leq p \leq \infty$, $C(B) \leq  \max\{2^{1/p},2^{1-1/p}\}$.

The classification theory of $L_p$ spaces (see \cite[\S4]{Johnson-Lindenstrauss})
gives that, for $1\leq p < \infty$, the space $B =  L_p(\Omega, \Sigma, \mu)$ is isometric to one
of  the Banach spaces in the list
\begin{equation}\label{eq:banachlist}
\ell^n_p, ~~\ell_p, ~~ L_p(0, 1), ~~\ell_p \oplus_p L_p(0, 1), ~~\ell^n_p \oplus_p L_p(0, 1) \qquad n=1,2, \ldots
\end{equation}
Here, $\ell_p$ denotes the space of sequences $(x_n)^\infty_{n=1}$ with $\sum^\infty_{n=1} |x_n|^p < \infty$ and
$L_p(0, 1)$ denotes the space of measurable functions (modulo null sets) on the unit interval such that
$\int^1_0 |f(x)|^p dx < \infty$, and
$\oplus_p$ denotes the $\ell_p$ direct sum,  that is, $\| a \oplus b \| = ( \| a\|^p   + \| b\|^p)^{1/p}$. 
Each of the spaces in the list~\eqref{eq:banachlist} (in the case of  $\ell^n_p$, assume $n \geq 2$)  contains a subspace isometric to $\ell^2_p$ and
so  $C(B) \geq C(\ell^2_p) =  \max\{2^{1/p},2^{1-1/p}\}$ by Proposition~\ref{prop:ell2p}.
Hence $C(B) =  \max\{2^{1/p},2^{1-1/p}\}$.
In the case $p = \infty$ note that $B$ contains a subspace isometric to $\ell^2_\infty$ which implies that $C(B) = 2$. 
\end{proof}

\begin{question} 
Let $(X,d)$ be a geodesic metric space.
Is $C_0(X,d) \leq 2^{1  / r(X,d)}$?
\end{question}
By Proposition~\ref{prop:tworound}, this is true in the case $r(X,d) = 2$.


\section{Snowflaked metric spaces} 

Recall that if $0 < \alpha \leq 1$ and $(X,d)$ is any metric space then
$(X,d^\alpha)$ is also a metric space, called the {\it $\alpha$-snowflake} of $(X,d)$.
We show that $C_0(X,d^\alpha) \leq 2^\alpha$, Theorem~\ref{thm:snowflake},
and give some applications of this estimate.
We determine the quadrilateral constant of the $\alpha$-snowflake of the Euclidean real line, Theorem~\ref{thm:lineflake}.
Recall that the $\alpha$-snowflake  of a Euclidean space is scalable and so
the quasi-hyperbolicity and quadrilateral constants coincide for such spaces.

\begin{lemma}\label{lem:ultra}
Let $a_{ij}\in \IR$, $i,j\in\{1,2,3,4\}$, be such that $a_{ij}=a_{ji}$. Let  $\lambda\ge 1$.
If $a_{ij}\le \lambda \max\{a_{ik},a_{kj}\}$ for all $i,j,k$, then $a_{ij}+a_{k\ell}\le \lambda \max\{ a_{ik}+a_{j\ell}, \,  a_{i\ell}+a_{jk}\}$ for all $i,j,k,\ell$.
\end{lemma}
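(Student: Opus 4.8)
The plan is to derive the four-point inequality from the three-point hypothesis by bounding $a_{ij}$ and $a_{k\ell}$ each by the \emph{minimum} of two ``midpoint'' estimates, which reduces the whole statement to a single inequality among four real numbers. Fix $i,j,k,\ell\in\{1,2,3,4\}$, not necessarily distinct, and set $p=a_{ik}$, $q=a_{j\ell}$, $s=a_{i\ell}$, $t=a_{jk}$, so that the right-hand side of the desired inequality is $\lambda\max\{p+q,\,s+t\}$. Applying the hypothesis $a_{\alpha\beta}\le\lambda\max\{a_{\alpha\gamma},a_{\gamma\beta}\}$ with middle index $\gamma=k$ and then $\gamma=\ell$, and using $a_{\alpha\beta}=a_{\beta\alpha}$, gives $a_{ij}\le\lambda\max\{p,t\}$ and $a_{ij}\le\lambda\max\{s,q\}$, hence
\[
a_{ij}\le\lambda\min\{\max(p,t),\,\max(s,q)\};
\]
similarly, with middle index $i$ and then $j$,
\[
a_{k\ell}\le\lambda\min\{\max(p,s),\,\max(t,q)\}.
\]
Adding these, it suffices to prove the elementary inequality
\[
\min\{\max(p,t),\,\max(s,q)\}+\min\{\max(p,s),\,\max(t,q)\}\le\max\{p+q,\,s+t\}
\]
for all real $p,q,s,t$. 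I would stress that this reduction needs no separate treatment of the degenerate cases in which some of $i,j,k,\ell$ coincide; the same four applications of the hypothesis cover them.

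To prove the displayed elementary inequality, first note that it is unchanged under the substitution $(p,q)\leftrightarrow(s,t)$, so one may assume $p+q\ge s+t$; then $t>p$ and $s>q$ cannot both hold, since that would force $s+t>p+q$. It then suffices to check, by a short case analysis distinguishing whether $p\ge t$ or $t>p$ (the latter forcing $s<q$) and, in the first case, whether $s\le q$ or $s>q$, that each of the two outer minima can be resolved into a branch so that the two chosen upper bounds sum to at most $p+q$. For instance, when $p\ge t$ and $s\le q$ the left side equals $\min(p,q)+\min(\max(p,s),\max(t,q))$, and one uses $\max(p,s)=p$ when $p\ge q$ and $\max(t,q)=q$ when $p<q$; in the remaining cases the inequality $p+q\ge s+t$ is invoked precisely to control a large cross term, e.g. via $\max(t,q)=t\le p+q-s$. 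Each case closes in one line.

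The only real work here is organizational. The conceptual step is the one in the first paragraph: bounding $a_{ij}$ (and $a_{k\ell}$) by the \emph{minimum} of its two midpoint estimates is what decouples the problem from the combinatorics of the index set and turns it into the four-variable inequality above. After that, the remaining obstacle is merely to arrange the case analysis so that $p+q\ge s+t$ is brought in at exactly the points where a single ``$\max$'' could otherwise overshoot; I expect no genuine difficulty there.
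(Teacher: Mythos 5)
Your proof is correct and is essentially the paper's argument in a different packaging: both use exactly the same four instances of the three-point hypothesis (middle index $k$ and $\ell$ for $a_{ij}$, middle index $i$ and $j$ for $a_{k\ell}$) followed by a case analysis on which of the resulting maxima are attained, the only difference being that you decouple the case analysis into a self-contained inequality in four reals rather than first normalizing so that $a_{ij}+a_{k\ell}$ is the largest pairing as the paper does. Your elementary inequality is indeed true and your sketch closes; a clean way to organize its proof is to write $\min(A,B)+\min(C,D)=\min\{A+C,\,A+D,\,B+C,\,B+D\}$ and check that, assuming $p+q\ge s+t$, at least one of the four sums $\max(p,t)+\max(t,q)$, $\max(s,q)+\max(p,s)$, $\max(p,t)+\max(p,s)$, $\max(s,q)+\max(t,q)$ is always at most $p+q$ (the first two handle $p,q\ge t$ and $p,q\ge s$ respectively, the last two handle $t>p,\,s>p$ and $t>q,\,s>q$, and the remaining configurations are incompatible with $p+q\ge s+t$).
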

Note that if $L$, $M$ and $S$ denote the largest, medium and smallest of the three sums $a_{ij}+a_{k\ell}$, $a_{ik}+a_{j\ell}$ and $a_{i\ell}+a_{jk}$ for some choice of $i,j,k,\ell\in\{1,2,3,4\}$,  then the conclusion of the lemma is equivalent to  $L\le \lambda M$.

\begin{proof}
Fix $i,j,k,\ell\in\{1,2,3,4\}$. Without loss of generality, assume that $L=a_{ij}+a_{k\ell}$ is the largest sum and assume that $ a_{k\ell}\le a_{ij}$. Since  
$a_{ij}\le \lambda \max\{a_{ik},a_{kj}\}$ 
 and $a_{ij}\le \lambda \max\{a_{i\ell}, \, a_{\ell j}\}$, we have 
\begin{equation*}a_{ij}+a_{k\ell}\le a_{ij} + a_{ij}\le \lambda \max\{a_{ik}+a_{i\ell},\, a_{ik}+a_{\ell j}, \, a_{kj}+a_{i\ell}, \, a_{kj}+a_{\ell j}\}.\end{equation*}

\noindent If $a_{ik}\ge a_{kj}$ and $a_{\ell j}\ge a_{i\ell}$ then 
$$M=a_{ik}+a_{\ell j} = \max\{a_{ik}+a_{i\ell}, \, a_{ik}+a_{\ell j}, \, a_{kj}+a_{i\ell}, \, a_{kj}+a_{\ell j}\}$$
and if $a_{ik}\le a_{kj}$ and $a_{\ell j}\le a_{i\ell}$ then  $$M=a_{kj}+a_{i\ell} = \max\{a_{ik}+a_{i\ell}, \, a_{ik}+a_{\ell j}, \, a_{kj}+a_{i\ell}, \, a_{kj}+a_{\ell j}\}.$$
In both cases, $L\le \lambda M$. Furthermore, if $a_{ik}\ge a_{kj}$ and $a_{\ell j}\le a_{i\ell}$\, then $a_{ij}\le \lambda \max\{a_{ik}, \, a_{kj}\} = \lambda a_{ik}$ and $a_{ij}\le \lambda \max\{a_{i\ell}, \, a_{\ell j}\} = \lambda a_{i\ell}$, and since $a_{k\ell}\le \lambda \max\{a_{kj}, \, a_{\ell j}\}$,
 \begin{align*}
 a_{ij}+a_{k\ell}& \le a_{ij}+\lambda \max\{a_{kj}, \, a_{\ell j}\} = \max\{a_{ij}+\lambda a_{kj}, \, a_{ij}+\lambda a_{\ell j}\}\\
 &\le \max\{\lambda a_{i\ell}+\lambda a_{kj}, \, \lambda a_{ik}+\lambda a_{\ell j}\} = \lambda \max\{a_{i\ell}+a_{kj}, \, a_{ik}+a_{\ell j}\}.
 \end{align*}
\noindent Finally, if $a_{ik}\le a_{kj}$ and $a_{\ell j}\ge a_{i\ell}$\, then 
\[
a_{ij}\le \lambda \max\{a_{ik}, \, a_{kj}\} = \lambda a_{kj} \,\, \text{   and   } \,\, a_{ij}\le \lambda \max\{a_{i\ell}, \, a_{\ell j}\} = \lambda a_{\ell j},
\]
and since $a_{k\ell}\le \lambda \max\{a_{ki}, \, a_{i\ell}\}$, we have
\[
a_{ij}+a_{k\ell} \le a_{ij}+\lambda \max\{a_{ki}, \, a_{i\ell}\}\le \lambda \max\{a_{\ell j}+a_{ki}, \, a_{kj}+a_{i\ell}\},
\]
that is, $L\le \lambda M$.
\end{proof}

\begin{thm}\label{thm:snowflake}
Let $0<\alpha\le 1$.
For any metric space $(X,d)$,  $C_0(X,d^\alpha)\le 2^\alpha$.
\end{thm}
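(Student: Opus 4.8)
The plan is to prove that $(X,d^\alpha)$ satisfies the $(2^\alpha,0)$-four-point inequality; by \eqref{eq:Czero} this gives $C_0(X,d^\alpha)\le 2^\alpha$ at once. The natural route is through Lemma \ref{lem:ultra}, which reduces a four-point inequality with multiplicative constant $\lambda\ge 1$ to the ``relaxed triangle inequality'' $a_{ij}\le\lambda\max\{a_{ik},a_{kj}\}$ for all triples. So I would aim to verify that the snowflaked metric $d^\alpha$ satisfies exactly such a relaxed triangle inequality with $\lambda=2^\alpha$.

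First I would record the elementary fact that for non-negative reals $a,b$ and $0<\alpha\le 1$,
\[
(a+b)^\alpha \le 2^\alpha\max\{a^\alpha,b^\alpha\},
\]
which follows by combining $a+b\le 2\max\{a,b\}$ with the monotonicity of $t\mapsto t^\alpha$ and the identity $\max\{a,b\}^\alpha=\max\{a^\alpha,b^\alpha\}$. Applying this with $a=d(x,z)$ and $b=d(z,y)$, together with the triangle inequality $d(x,y)\le d(x,z)+d(z,y)$, yields for all $x,y,z\in X$
\[
d(x,y)^\alpha \le 2^\alpha\max\{d(x,z)^\alpha,d(z,y)^\alpha\}.
\]
Thus $d^\alpha$ obeys the relaxed triangle inequality with constant $\lambda=2^\alpha$, and since $\alpha>0$ we have $2^\alpha\ge 1$, so Lemma \ref{lem:ultra} is applicable.

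Finally, I would fix any four points $p_1,p_2,p_3,p_4\in X$ and set $a_{ij}=d(p_i,p_j)^\alpha$. These numbers are symmetric and satisfy the hypothesis of Lemma \ref{lem:ultra} with $\lambda=2^\alpha$ by the previous step, so the lemma gives
\[
a_{ij}+a_{k\ell}\le 2^\alpha\max\{a_{ik}+a_{j\ell},\, a_{i\ell}+a_{jk}\},
\]
which is precisely the $(2^\alpha,0)$-four-point inequality for $p_1,p_2,p_3,p_4$. As these were arbitrary, $(X,d^\alpha)$ satisfies the $(2^\alpha,0)$-four-point inequality, whence $C_0(X,d^\alpha)\le 2^\alpha$.

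There is essentially no real obstacle left in this argument: the combinatorial heart — the reduction of the four-point inequality to the three-point relaxed triangle inequality — has already been isolated in Lemma \ref{lem:ultra}. The only points requiring care are the correct direction of monotonicity of $t\mapsto t^\alpha$ on $[0,\infty)$ and the verification $2^\alpha\ge 1$ needed so that Lemma \ref{lem:ultra} applies.
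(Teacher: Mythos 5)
Your proposal is correct and follows essentially the same route as the paper: the paper also verifies the relaxed triangle inequality $(x_ix_j)^\alpha \le 2^\alpha\max\{(x_ix_k)^\alpha,(x_jx_k)^\alpha\}$ from the ordinary triangle inequality and then applies Lemma \ref{lem:ultra} with $a_{ij}=(x_ix_j)^\alpha$ and $\lambda=2^\alpha$.
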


\begin{proof}
Let $x_i\in X$, $i=1,2,3,4$. It suffices to show that if $i,j,k,l\in\{1,2,3,4\}$ then
\[
(x_ix_j)^\alpha + (x_kx_l)^\alpha \le 2^\alpha \max\{(x_ix_k)^\alpha +(x_jx_l)^\alpha, \, (x_ix_l)^\alpha +(x_jx_k)^\alpha\}.
\]
Observe that for all $i,j,k$ triangle inequality implies $x_ix_j\le x_ix_k + x_jx_k \le 2\max\{x_ix_k,x_jx_k\}$.
Hence 
\[
(x_ix_j)^\alpha \le 2^\alpha\max\{(x_ix_k)^\alpha,(x_jx_k)^\alpha\}.
\] The conclusion follows from Lemma~\ref{lem:ultra} with $a_{ij}=(x_i x_j)^\alpha$ and $\lambda=2^\alpha$.
\end{proof}

As in \S\ref{sec:Banach}, $d_p$, where $1 \leq p \leq \infty$, denotes the metric on $\IR^n$ determined by the standard $p$-norm.

\begin{prop}\label{prop:Linfinitysnowflake}
If $0<\alpha \leq 1$ and $n \geq 2$ then $C_0(\IR^n, d_\infty^\alpha)=2^\alpha$
\end{prop}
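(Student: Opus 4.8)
The plan is to obtain the upper bound from Theorem~\ref{thm:snowflake} and the matching lower bound from an explicit four-point configuration in $\ell_\infty^n$.

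\textbf{Upper bound.} The metric space $(\IR^n, d_\infty)$ is the one underlying the normed vector space $\ell_\infty^n$, so Corollary~\ref{cor:scalable}, applied with $V = S = \IR^n$ and this norm, gives $C(\IR^n, d_\infty^\alpha) = C_0(\IR^n, d_\infty^\alpha)$. By Theorem~\ref{thm:snowflake} we have $C_0(\IR^n, d_\infty^\alpha) \le 2^\alpha$, hence $C(\IR^n, d_\infty^\alpha) \le 2^\alpha$.

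\textbf{Lower bound.} By \eqref{eq:Czero} it suffices to exhibit points $x,y,z,w$, not all equal, with $\Delta(x,y,z,w) = 2^\alpha$ computed in the metric $d_\infty^\alpha$. Since $n \ge 2$ we may take
\[
x = (0,0,0,\dots,0),\quad y = (2,0,0,\dots,0),\quad z = (1,1,0,\dots,0),\quad w = (1,-1,0,\dots,0),
\]
with zeros in the remaining $n-2$ coordinates. A direct check of the six pairwise $\ell_\infty$-distances gives $d_\infty(x,y) = d_\infty(z,w) = 2$ and $d_\infty(x,z) = d_\infty(y,w) = d_\infty(x,w) = d_\infty(y,z) = 1$; raising to the power $\alpha$ and substituting into \eqref{eq:Delta} yields
\[
\Delta(x,y,z,w) = \frac{2^\alpha + 2^\alpha}{\max\{1+1,\ 1+1\}} = 2^\alpha .
\]
Thus $C_0(\IR^n, d_\infty^\alpha) \ge 2^\alpha$, and hence $C(\IR^n, d_\infty^\alpha) \ge 2^\alpha$ by Corollary~\ref{cor:scalable}.

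Combining the two inequalities gives $C(\IR^n, d_\infty^\alpha) = 2^\alpha$. No step here is genuinely difficult; the only point requiring thought is the choice of the four points, where one wants a degenerate quadrilateral in which one pair of opposite distances ($xy$ and $zw$) is pushed up to twice the common value of the other four distances — precisely what the $\ell_\infty$ geometry of the configuration above allows.
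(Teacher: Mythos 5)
Your proof is correct and follows essentially the same route as the paper: upper bound from Theorem~\ref{thm:snowflake} together with $C=C_0$ via Corollary~\ref{cor:scalable}, and lower bound from an explicit four-point configuration; indeed your points are a translate (by $(1,0,\dots,0)$, with the two opposite pairs swapped) of the configuration $(0,\pm1,0,\dots,0)$, $(\pm1,0,\dots,0)$ used in the paper, so the computations agree.
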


\begin{proof}
By Theorem~\ref{thm:snowflake},  $C_0(\IR^n, d_\infty^\alpha) \leq 2^\alpha$. 
Consider the following four points in  $\IR^n$:
\[
x=(0,1, 0, \ldots,0),  ~  y=(0,-1, 0, \ldots,0), ~  z=(-1, 0, \ldots,0),  ~w=(1, 0, \ldots,0).
\]
A calculation using the metric $d_\infty^\alpha$  yields $\Delta(x,y,z,w) = 2^\alpha$ and thus $C_0(\IR^n, d_\infty^\alpha) \geq 2^\alpha$. Hence 
$C_0(\IR^n, d_\infty^\alpha) =   2^\alpha$.
\end{proof}

The same technique gives a non-sharp estimate for $C_0(\IR^n, d_2^\alpha)$, where $n \geq 2$, as follows.

\begin{prop}
If $0<\alpha \leq 1$ and $n \geq 2$ then $2^{\alpha/2} \leq C_0(\IR^n, d^\alpha_2) \leq 2^{\min\{\alpha, \, 1/2\}}$.
\end{prop}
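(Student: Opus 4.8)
The plan is to get the lower bound from an explicit four-point configuration, exactly in the spirit of the proof of Proposition~\ref{prop:Linfinitysnowflake}, and to get the upper bound as the minimum of two estimates: the general snowflake bound of Theorem~\ref{thm:snowflake} and the $\CAT(0)$-type bound of Theorem~\ref{thm:Ptround} (equivalently Corollary~\ref{cor:CATzero}). Throughout, Corollary~\ref{cor:scalable} lets us replace $C$ by $C_0$, since $\IR^n$ is a cone through the origin.

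For the lower bound I would take the four points $x=(0,1,0,\dots,0)$, $y=(0,-1,0,\dots,0)$, $z=(-1,0,\dots,0)$, $w=(1,0,\dots,0)$ of $\IR^n$, which uses $n\ge 2$. With respect to the Euclidean metric $d_2$ one has $d_2(x,y)=d_2(z,w)=2$, while $d_2(x,z)=d_2(y,w)=d_2(x,w)=d_2(y,z)=\sqrt{2}$; hence for the snowflaked metric $d_2^\alpha$,
\[
\Delta(x,y,z,w)=\frac{2^\alpha+2^\alpha}{2\,(\sqrt{2})^{\alpha}}=2^{\alpha/2},
\]
so $C_0(\IR^n,d_2^\alpha)\ge 2^{\alpha/2}$ and therefore $C(\IR^n,d_2^\alpha)\ge 2^{\alpha/2}$.

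For the upper bound, Theorem~\ref{thm:snowflake} gives $C_0(\IR^n,d_2^\alpha)\le 2^\alpha$ with no further work, so it remains to prove $C_0(\IR^n,d_2^\alpha)\le\sqrt{2}$; combining the two yields $C_0(\IR^n,d_2^\alpha)\le\min\{2^\alpha,\sqrt{2}\}=2^{\min\{\alpha,1/2\}}$, and $C=C_0$. To establish the $\sqrt 2$ bound I would check that $(\IR^n,d_2^\alpha)$ is Ptolemaic and $2$-round and apply Theorem~\ref{thm:Ptround}. Ptolemy is easy: starting from the Euclidean Ptolemy inequality, raising to the power $\alpha\le 1$, and using both subadditivity $(a+b)^\alpha\le a^\alpha+b^\alpha$ and multiplicativity of $t\mapsto t^\alpha$, one sees that the $\alpha$-snowflake of any Ptolemaic space is again Ptolemaic, in particular $(\IR^n,d_2^\alpha)$. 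For $2$-roundness, note that $(\IR^n,d_2^\alpha)$ is $2$-round precisely when $(\IR^n,d_2)=\ell_2^n$ is $2\alpha$-round (Definition~\ref{def:round}); this matters only for $1/2<\alpha\le 1$, i.e. $1<2\alpha\le 2$, where it follows because $\ell_2^n$ embeds isometrically, up to a positive scalar, into $L_{2\alpha}$ via Gaussian random variables (see \cite{Johnson-Lindenstrauss}), $L_{2\alpha}$ has roundness $2\alpha$ and hence is $2\alpha$-round (Enflo \cite{Enflo1969}, as used in Corollary~\ref{cor:lp}), and $p$-roundness passes to subspaces and is scale invariant. Equivalently, one may invoke the classical (Schoenberg-type) fact that the $\alpha$-snowflake of a Euclidean space embeds isometrically into a Hilbert space for $0<\alpha\le 1$, so that $(\IR^n,d_2^\alpha)$ is a subspace of a $\CAT(0)$-space and Corollary~\ref{cor:CATzero} applies directly.

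The only step that is not routine is the $2$-roundness of the Euclidean snowflake for $1/2<\alpha<1$ — equivalently, that $\ell_2^n$ is $2\alpha$-round, or that it embeds isometrically into a Hilbert space — which rests on a Schoenberg-type result rather than on anything developed in this paper; the explicit $\Delta$-computation and the Ptolemy manipulation are both straightforward. The resulting estimate is non-sharp for $\alpha<1$, but sharp at $\alpha=1$, where both bounds collapse to $\sqrt 2$, recovering $C(\IR^n,d_2)=\sqrt 2$ of Proposition~\ref{prop:euclidean}.
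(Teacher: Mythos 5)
Your proposal is correct, and its lower bound and the $2^\alpha$ upper bound coincide exactly with the paper's argument (the same four points $x,y,z,w$ from Proposition~\ref{prop:Linfinitysnowflake}, and Theorem~\ref{thm:snowflake}). The only divergence is in how you obtain $C_0(\IR^n,d_2^\alpha)\le\sqrt{2}$. The paper does this in one line by citing Schoenberg's theorem that $(\IR^n,d_2^\alpha)$ embeds isometrically into Hilbert space and then invoking Corollary~\ref{cor:CATzero} --- which is precisely the ``equivalent'' route you mention at the end. Your primary route instead verifies the hypotheses of Theorem~\ref{thm:Ptround} directly: the Ptolemy inequality for the snowflake follows cleanly from multiplicativity and subadditivity of $t\mapsto t^\alpha$ (and your observation that the $\sqrt{2}$ bound is only needed for $\alpha>1/2$ is right, since otherwise $2^\alpha\le\sqrt 2$ already), while $2$-roundness of $(\IR^n,d_2^\alpha)$ is reduced to $2\alpha$-roundness of $\ell_2^n$ via the Gaussian embedding into $L_{2\alpha}$ together with Enflo's computation $r(L_q)=q$ for $1\le q\le 2$. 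That argument is valid, but it trades one classical external input (Schoenberg) for another of comparable weight (the Gaussian isometric embedding of $\ell_2^n$ into $L_q$), so it is not more self-contained; its modest advantage is that it exhibits the two properties (Ptolemaic, $2$-round) explicitly rather than routing through $\CAT(0)$-spaces. Either version is acceptable; the paper's is shorter.
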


\begin{proof}
By Theorem~\ref{thm:snowflake},  $C_0(\IR^n, d^\alpha_2) \leq 2^{\alpha}$.
Schoenberg showed, \cite[Theorem~1]{Sch:37On-:aa}, that $(\IR^n, d^\alpha_2)$ isometrically embeds into (infinite dimensional) Hilbert space
and hence $C_0(\IR^n, d^\alpha_2) \leq 2^{1/2}$. 
Consequently, $C_0(\IR^n, d^\alpha_2) \leq 2^{\min\{\alpha, \, 1/2\}}$.
For the four points 
$x,y,z,w \in \IR^n$ specified in the proof of  Proposition~\ref{prop:Linfinitysnowflake},
we have
$\Delta(x,y,z,w) = 2^{\alpha/2}$, yielding the lower bound for $C_0(\IR^n, d^\alpha_2)$.
\end{proof}

Numerical calculations suggest the following exact value for $C_0(\IR^n, d^\alpha_2)$ when $n \geq 2$.

\begin{conj}\label{conj:euclideansnowflake}
Let $0 < \alpha < 1$.
If $n \geq 2$ then  $C_0(\IR^n, d^\alpha_2)=2^{\alpha/2}$.
\end{conj}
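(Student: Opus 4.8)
The lower bound $C(\IR^n,d^\alpha_2)\ge 2^{\alpha/2}$ is already in hand: the four points $x,y,z,w$ from the proof of Proposition~\ref{prop:Linfinitysnowflake}, which form a square of circumradius $1$, satisfy $\Delta(x,y,z,w)=2^{\alpha/2}$ with respect to the metric $d_2^\alpha$. So the entire content of the conjecture is the matching upper bound. Since $d_2$ is the metric of the Euclidean norm, Corollary~\ref{cor:scalable} gives $C(\IR^n,d_2^\alpha)=C_0(\IR^n,d_2^\alpha)$, so by \eqref{eq:Czero} it suffices to prove that $\Delta(x,y,z,w)\le 2^{\alpha/2}$ for every four points $x,y,z,w\in\IR^n$. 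Any four points lie in an affine subspace of dimension at most three, and restricting $d_2^\alpha$ to it is an isometric embedding, so one may take $n\le 3$; thus the conjecture is equivalent to the finite-dimensional assertion that, with $xy$ denoting the Euclidean distance $d_2(x,y)$ and similarly for the other five pairs, the ratio
\[
\frac{(xy)^\alpha+(zw)^\alpha}{\max\{(xz)^\alpha+(yw)^\alpha,\;(xw)^\alpha+(yz)^\alpha\}}
\]
is maximised, over all distance six-tuples realisable by four points of $\IR^3$ (the Cayley--Menger realisability conditions), by the square configuration.

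I see two natural routes. The first is to prove a metric-space version of Theorem~\ref{thm:round}. Since $(\IR^n,d_2)$ has the midpoint property and satisfies the quadrilateral inequality, its roundness is exactly $2$; and snowflaking multiplies roundness by $1/\alpha$, because the $q$-round inequality for $(X,d^\alpha)$ is literally the $(\alpha q)$-round inequality for $(X,d)$, so $(\IR^n,d_2^\alpha)$ is $(2/\alpha)$-round. A general implication ``$p$-round $\Rightarrow C_0\le 2^{1/p}$'', applied with $p=2/\alpha$, would give precisely $2^{\alpha/2}$. The second route is to attack the optimisation directly: normalise so that the denominator above equals $2$, use that in Euclidean space the six distances obey Ptolemy's inequality $(xy)(zw)\le(xz)(yw)+(xw)(yz)$ in all three labellings together with the quadrilateral inequality, and combine these with the concavity of $t\mapsto t^\alpha$; the supremum of $\Delta$ is attained by compactness, and a first-order variational analysis at a maximiser should force it to be planar with $xy=zw$ and $xz=yw=xw=yz$, i.e.\ the square. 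Both routes collapse to $C(\IR^n,d_2)=\sqrt 2$ when $\alpha=1$, via the ``twice Ptolemy plus quadrilateral'' computation of Theorem~\ref{thm:Ptround}; and the $p=2$ instance of the hypothetical roundness bound in the first route is Proposition~\ref{prop:tworound}, which, however, requires geodesicity that the snowflake lacks.

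The hard part is that neither route is known to go through. The proof of Theorem~\ref{thm:round} is Banach-space specific --- it manipulates $e\pm f$ and invokes the triangle inequality in the ambient vector space --- and no implication ``$p$-round $\Rightarrow C_0\le 2^{1/p}$'' is available without linear structure (the question following Corollary~\ref{cor:lp} is open even for geodesic spaces when $r\ne 2$). For the direct optimisation, the objective is nonconvex and nonsmooth because of the $\max$ in the denominator, and the tidy algebraic identity that pins down the sharp constant $\sqrt 2$ at $\alpha=1$ has no evident $\alpha$-homogeneous analogue producing $2^{\alpha/2}$; even when $0<\alpha\le 1/2$, where Blumenthal's theorem (Remark~\ref{rem:snowflake}) embeds the four snowflaked points isometrically in Euclidean space, one only recovers the non-sharp bound $\sqrt 2$. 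A complete proof would most plausibly come either from a sharp interpolation inequality bridging the quadratic Euclidean relations to the $\alpha$-homogeneous target, or from a full case analysis of the Cayley--Menger data of a maximiser; this is precisely why the statement is recorded here only as a conjecture, on the strength of numerical evidence.
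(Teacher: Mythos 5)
The statement you were asked to prove is recorded in the paper only as a conjecture, supported by numerical evidence; the paper contains no proof of the upper bound $C(\IR^n,d_2^\alpha)\le 2^{\alpha/2}$, and neither does your proposal, as you correctly and explicitly acknowledge. What you do establish --- the lower bound $2^{\alpha/2}$ from the square configuration of Proposition \ref{prop:Linfinitysnowflake}, the reduction $C=C_0$ via Corollary \ref{cor:scalable}, and the non-sharp upper bound $2^{\min\{\alpha,\,1/2\}}$ via Theorem \ref{thm:snowflake} and Schoenberg's embedding --- coincides exactly with what the paper proves in the proposition immediately preceding the conjecture. Your observation that the conjectured value is precisely what a metric-space analogue of Theorem \ref{thm:round} would yield, since snowflaking gives $r(\IR^n,d_2^\alpha)=2/\alpha$, is an apt explanation of where $2^{\alpha/2}$ comes from and correctly ties the conjecture to the open question following Corollary \ref{cor:lp}; your diagnosis of why both the roundness route (no linear structure, not geodesic) and the direct optimisation stall is accurate, so your assessment matches the paper's state of knowledge rather than contradicting it.
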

The $\alpha$-snowflakes of the Euclidean line turns out to be of a different nature than the spaces $(\IR^n, d^\alpha_2)$ with $n \geq 2$,
as revealed in the following theorem.

\begin{thm} \label{thm:lineflake}
Let $0 < \alpha \leq 1$  and $d_E^\alpha(x,y) = |x-y|^\alpha, x,y\in \R$.
Let $m \geq 1$ be the unique  solution to the equation  $(m-1)^\alpha + (m+1)^\alpha =2$.
Then $C_0(\IR^1, d^\alpha_{E}) = m^{\alpha}$.
\end{thm}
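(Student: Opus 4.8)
The plan is to reduce the computation of $C(\IR^1, d_E^\alpha)$ to a one-variable optimization problem, solve it, and match the answer to the stated quantity $m^\alpha$. Since $(\IR, d_E^\alpha)$ is four-point scalable in the large (Example~\ref{ex:4pointscalable}, taking $V = \IR$), Corollary~\ref{cor:scalable} gives $C(\IR^1, d_E^\alpha) = C_0(\IR^1, d_E^\alpha)$, so it suffices to compute the supremum of $\Delta(x,y,z,w)$ over quadruples of points on the line. First I would normalize: by translation and scaling invariance (Proposition~\ref{prop:basicstwo}(ii)), and by relabeling so that $x \le z \le w \le y$ after sorting the four points, the configuration is determined up to scale by two ratios. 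In fact, on the line the supremum should be attained by a symmetric configuration $-b, -a, a, b$ with $0 \le a \le b$, and rescaling lets us set $a = 1$. One then writes $\Delta$ for this family as an explicit function of the single parameter $b$ and maximizes.

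The key computation is to identify, for the configuration $x = -b$, $y = b$, $z = -1$, $w = 1$ (or whichever pairing gives the numerator $xy + zw$ as the largest of the three pairwise sums), the value
\[
\Delta(b) = \frac{(2b)^\alpha + 2^\alpha}{(b+1)^\alpha + (b-1)^\alpha},
\]
where the denominator is $\max$ of the two remaining sums, which for this symmetric arrangement are both equal to $(b+1)^\alpha + (b-1)^\alpha$. Then I would differentiate $\Delta(b)$ in $b$ and show the critical point equation reduces, after clearing denominators and using homogeneity, to $(m-1)^\alpha + (m+1)^\alpha = 2$ with $b = m$; one checks this equation has a unique solution $m \ge 1$ (the left side is continuous, equals $2^\alpha \ge$ or $\le 2$ appropriately at $m=1$, and is eventually large/has the right monotonicity for $0 < \alpha \le 1$) and that the critical point is a maximum. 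Substituting $b = m$ and using the defining relation to simplify the denominator to $2$ yields $\Delta(m) = \tfrac{1}{2}\big((2m)^\alpha + 2^\alpha\big) = 2^{\alpha-1}(m^\alpha + 1)$; a further use of $(m-1)^\alpha + (m+1)^\alpha = 2$ should collapse this to exactly $m^\alpha$. (If the algebra does not collapse quite so cleanly, the relation at least pins down $\Delta(m)$ in closed form, and one verifies it equals $m^\alpha$ directly.)

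The genuine obstacle is the \emph{reduction} step: proving that the supremum of $\Delta$ over \emph{all} quadruples on the line is achieved by a symmetric four-point configuration of the above type, rather than just computing $\Delta$ on that family. For this I would argue that, holding the cyclic order $p_1 \le p_2 \le p_3 \le p_4$ fixed, the function $\Delta$ is monotone (or convex/quasiconvex) in each coordinate on the relevant region, pushing the extremum to a configuration with the outer gap symmetric about the midpoint of the inner pair, and that among pairings the one making $xy + zw$ the ``long diagonal'' sum is the one to take (the other pairings give $\Delta \le 1$ or a smaller value by the subadditivity of $t \mapsto t^\alpha$). This is the optimization-problem analysis alluded to in the statement; it is elementary but needs care with the casework over orderings and over which of the three pairwise sums is the maximum in the denominator. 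Once that is in hand, combining it with Proposition~\ref{prop:basicstwo}(iv) (which gives $C \ge 1$, automatically satisfied since $m \ge 1$) completes the proof.
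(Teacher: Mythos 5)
Your opening reduction is the same as the paper's (Corollary \ref{cor:scalable} gives $C=C_0=\sup\Delta$, then normalize), and your guess that the extremal quadruple is symmetric about a midpoint is correct. But the explicit computation on the symmetric family is wrong in a way that cannot be repaired by "verifying directly." For the points $-b,-1,1,b$ the three matching sums in the metric $d_E^\alpha$ are $2(b-1)^\alpha$, $2(b+1)^\alpha$ and $(2b)^\alpha+2^\alpha$; your denominator $(b+1)^\alpha+(b-1)^\alpha$ is not one of them, and your choice of numerator is the wrong pairing. The largest of the three sums is always the \emph{crossing} sum $2(b+1)^\alpha$ (pairing each outer point with the opposite inner point), not the diagonal sum $(2b)^\alpha+2^\alpha$; with your pairing one gets $\Delta<1$ at the true optimum. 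The correct one-parameter problem is to maximize $2(b+1)^\alpha/\max\{2(b-1)^\alpha,\,(2b)^\alpha+2^\alpha\}$, whose maximum occurs where the two candidates in the denominator are equal, i.e.\ $2(b-1)^\alpha=(2b)^\alpha+2^\alpha$; setting $m=(b+1)/(b-1)$ this is exactly $(m-1)^\alpha+(m+1)^\alpha=2$ and the value is $\bigl(\tfrac{b+1}{b-1}\bigr)^\alpha=m^\alpha$. Your closed form $\Delta(m)=2^{\alpha-1}(m^\alpha+1)$ does not equal $m^\alpha$ (for $\alpha=1/2$ it gives $\approx 1.498$ versus $\sqrt5/2\approx1.118$) and in fact exceeds the upper bound $2^\alpha$ guaranteed by Theorem \ref{thm:snowflake}, which is a built-in contradiction signalling the error.

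The second, and larger, gap is the reduction itself. You acknowledge that proving the supremum over all quadruples is attained on the symmetric family is "the genuine obstacle," but the sketch you offer --- coordinatewise monotonicity pushing the extremum to a symmetric configuration --- is not an argument. On the line, $\Delta$ restricted to a fixed ordering is the \emph{minimum} of two functions $F$ and $G$ of two shape parameters (equations \eqref{eq:F_and_G} in the paper); $F$ is strictly increasing in both variables and $G$ has no interior critical points, so monotonicity only pushes the maximum onto the interface curve $\{F=G\}$ (Lemma \ref{lem:max_on_intersection}), not to a symmetric point. The real work is showing that along this curve the maximum occurs at the symmetric parameter $t=s$; the paper does this in Lemma \ref{lem:max_on_diagonal} via an implicit-function/Lagrange computation whose sign analysis rests on the monotonicity of $t\mapsto (1-t^{1-\alpha})/(1-t)^{1-\alpha}$ on $(0,1)$. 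Without some substitute for these two lemmas, your argument establishes only a lower bound for $C(\IR^1,d_E^\alpha)$ from one family of configurations, not the claimed equality.
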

We have that
\begin{equation}\label{eq:line_snowflaked}
C_0(\R^1,d_E^\alpha) = \sup\Delta(x,y,z,w), 
\end{equation} where 
\[\Delta(x,y,z,w) = \frac{|x-y|^\alpha+|z-w|^\alpha}{\max\{ |x-z|^\alpha + |y-w|^\alpha, \ |x-w|^\alpha + |y-z|^\alpha\}}\]
and the supremum in (\ref{eq:line_snowflaked}) is taken over all $x,y,z,w\in\R$, not all identical.
Since the map $(x,y,z,w)\mapsto \Delta(x,y,z,w)$ is translation and scale invariant,
we may assume that $x=0$, $y=1+s$, $z=1-t$, and $w=2$, with $(t,s)\in D=\{(t,s) \in [-1,1] \times [-1,1]  ~\mid~ t+s\ge 0\}.$
Then 
\[ 
(t,s) \mapsto \Delta(0, 1+s, 1-t, 2)  = \frac{(1+s)^\alpha+(1+t)^\alpha}{\max\limits_{(t,s)\in D}\{ (1-t)^\alpha+(1-s)^\alpha,(t+s)^\alpha + 2^\alpha\}}
\] 
is continuous on the compact set $D$ and 
\[
C_0(\R^1,d_E^\alpha) = \max_{(t,s)\in D }\Delta(0, 1+s, 1-t, 2).
\]
Furthermore, if $F, G \colon D\to \R$ are given by
\begin{equation}\label{eq:F_and_G}
F(t,s)=\frac{(1+t)^\alpha+(1+s)^\alpha}{(1-t)^\alpha+(1-s)^\alpha} \mbox{ and } G(t,s) = \frac{(1+t)^\alpha+(1+s)^\alpha}{(t+s)^\alpha + 2^\alpha},
\end{equation} 
and $D_1 = \{(t,s)\in D \mid F(t,s)\le G(t,s)\}$ and $D_2 = \{(t,s)\in D \mid F(t,s)\ge G(t,s)\}$,  then 
\[
\Delta(0,1-t,1+s,2) = \min_{(t,s)\in D}\{F(t,s),G(t,s)\} = 
\begin{cases} 
F(t,s), & (t,s) \in D_1\\
G(t,s), & (t,s) \in D_2,
\end{cases}\]
and 
\begin{equation}\label{eq:max_subsets}
C_0(\R^1,d_E^\alpha) = \max\left\{\max_{(t,s)\in D_1}F(t,s), \max_{(t,s)\in D_2}G(t,s)\right\}.
\end{equation}
The following lemma shows that the maximum in (\ref{eq:max_subsets}) is attained on $D_0 = D_1\cap D_2$.

\begin{lemma}\label{lem:max_on_intersection}
Let $0 < \alpha < 1$. Let $F,G\colon D \to \R$ be given by (\ref{eq:F_and_G}) and let $D_0 = \{(t,s)\in D \mid F(t,s) = G(t,s) \}$. Then 
\[
C_0(\R^1,d_E^\alpha) = \max_{(t,s)\in D_0}F(t,s).
\]
\end{lemma}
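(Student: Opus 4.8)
The plan is to re-encode the conditions defining $D_1,D_2,D_0$ by a single continuous function and then show that the maximum of $F$ over $D_1$ and the maximum of $G$ over $D_2$ are each forced onto $D_0$. Note first that $D$ is the (closed) triangle with vertices $(1,1),(1,-1),(-1,1)$, so $\partial D$ consists of the two edges $\{t=1\}$ and $\{s=1\}$ together with the segment $\{t+s=0\}$. Since $F$ and $G$ have the common numerator $N(t,s):=(1+t)^\alpha+(1+s)^\alpha>0$ on $D$, putting $\varphi(t,s):=(1-t)^\alpha+(1-s)^\alpha-(t+s)^\alpha-2^\alpha$ and comparing denominators gives $F(t,s)\le G(t,s)\iff\varphi(t,s)\ge 0$; thus $D_1=\{\varphi\ge 0\}$, $D_2=\{\varphi\le 0\}$, $D_0=\{\varphi=0\}$ are compact, $F$ and $G$ are continuous on $D_1$ and $D_2$ respectively (the denominator of $F$ vanishes only at $(1,1)$, which is not in $D_1$ since $\varphi(1,1)<0$, and the denominator of $G$ is everywhere $\ge 2^\alpha$), and $F=G$ on $D_0\subseteq D_1\cap D_2$. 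Hence $\max_{D_0}F=\max_{D_0}G$ is a common lower bound for $\max_{D_1}F$ and $\max_{D_2}G$, and by \eqref{eq:max_subsets} it remains to prove the reverse inequalities $\max_{D_1}F\le\max_{D_0}F$ and $\max_{D_2}G\le\max_{D_0}F$.

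First I would record elementary facts by direct differentiation: on the open square $(-1,1)^2$ one has $\partial F/\partial t,\partial F/\partial s>0$ and $\partial\varphi/\partial t,\partial\varphi/\partial s<0$; also $\varphi(1,s)=(1-s)^\alpha-(1+s)^\alpha-2^\alpha\le 0$ with equality only at $s=-1$, symmetrically $\varphi(t,1)\le 0$ with equality only at $t=-1$, and $\varphi(t,-t)=(1-t)^\alpha+(1+t)^\alpha-2^\alpha\ge 0$ with equality only at $t=\pm1$ (concavity of $u\mapsto(1-u)^\alpha+(1+u)^\alpha$ on $[-1,1]$); finally $F\equiv 1$ on $\{t+s=0\}$ and $G\equiv 1$ on $\{t=1\}\cup\{s=1\}$. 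For the first inequality, let $(t_0,s_0)\in D_1$ attain $\max_{D_1}F$. If $\varphi(t_0,s_0)=0$ we are done. If $\varphi(t_0,s_0)>0$, then by the facts just listed $(t_0,s_0)$ either lies in $\operatorname{int}(D)$ or lies on $\{t+s=0\}$ with $t_0\in(-1,1)$: in the first case $F$ is strictly increasing in $t$ near $(t_0,s_0)$ while $\varphi$ stays positive, so replacing $t_0$ by $t_0+\varepsilon$ for small $\varepsilon>0$ keeps the point in $D\cap\{\varphi>0\}\subseteq D_1$ and strictly increases $F$, contradicting maximality; in the second case $F(t_0,s_0)=1$, so $\max_{D_1}F=1$, and since $(1,-1)\in D_0$ with $F(1,-1)=1$ we conclude $\max_{D_0}F=1=\max_{D_1}F$. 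In either event $\max_{D_1}F=\max_{D_0}F$.

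The inequality $\max_{D_2}G\le\max_{D_0}F$ is the heart of the argument, and here monotonicity is unavailable since $G$ is not monotone on $D$; the idea is instead to show $G$ has no critical point in $\operatorname{int}(D)$. Writing $M(t,s):=(t+s)^\alpha+2^\alpha$, the equations $\partial_t G=\partial_s G=0$ become $(1+t)^{\alpha-1}M=N(t+s)^{\alpha-1}=(1+s)^{\alpha-1}M$; since $M>0$ and $u\mapsto u^{\alpha-1}$ is injective on $(0,\infty)$ this forces $t=s$, and then, using $(2t)^\alpha=(2t)(2t)^{\alpha-1}$, the surviving equation simplifies to $(2t)^{\alpha-1}=2^{\alpha-1}$, hence $t=s=1$ and $(1,1)\notin\operatorname{int}(D)$. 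Now let $(t_0,s_0)\in D_2$ attain $\max_{D_2}G$. If $(t_0,s_0)\in\operatorname{int}(D)$ with $\varphi(t_0,s_0)<0$, then $(t_0,s_0)$ is an interior point of $D_2$, hence a local maximum and so a critical point of $G$, which is impossible. Therefore either $\varphi(t_0,s_0)=0$, so $(t_0,s_0)\in D_0$ and $G(t_0,s_0)\le\max_{D_0}G=\max_{D_0}F$, or $(t_0,s_0)\in\partial D\cap D_2$. In the latter case the boundary facts show $\partial D\cap D_2=\{t=1\}\cup\{s=1\}$ (the endpoints $(1,-1),(-1,1)$ of $\{t+s=0\}$ lie in $D_0$, and the rest of $\{t+s=0\}$ lies in $D_1$), on which $G\equiv 1$; since $(1,-1)\in D_0$ with $F(1,-1)=1$ we again get $G(t_0,s_0)=1\le\max_{D_0}F$. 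Hence $\max_{D_2}G\le\max_{D_0}F$, and combining with the previous paragraph and \eqref{eq:max_subsets} gives $C(\R^1,d_E^\alpha)=\max_{D_0}F$.

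I expect the $G$-step to be the main obstacle: apart from the bookkeeping about which parts of $\partial D$ meet $D_2$, it requires the short but slightly delicate simplification showing that $G$ has no interior critical point, which is what lets one conclude that $\max_{D_2}G$ is attained on $\partial(D_2)=D_0\cup(\partial D\cap D_2)$. The $F$-step, by comparison, is a routine monotonicity-and-perturbation argument once the behaviour of $\varphi$ and $F$ along $\partial D$ is recorded.
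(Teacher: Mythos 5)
Your proof is correct and follows essentially the same route as the paper: show that $F$ has no interior critical points on $D_1$ and $G$ none on $D_2$ (the only candidate being $(1,1)$), push both maxima to the boundaries, and observe that the portions of those boundaries lying on $\partial D$ contribute only the value $1\le\max_{D_0}F$. Your explicit bookkeeping via $\varphi$ of which pieces of $\partial D$ belong to $D_1$ versus $D_2$ is somewhat more careful than the paper's terse identification of $\partial D_1$ and $\partial D_2$ (which even contains the typo ``$t+s=1$'' for ``$t+s=0$''), but the underlying argument is the same.
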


\begin{proof}
We show that $F$ and $G$ attain their maximum on the boundary of $D_1$ and $D_2$, respectively.  Indeed, the partial derivatives of $F$,
\[ F_t (t,s) = \frac{\alpha(1+t)^{\alpha-1}}{(1-t)^\alpha+(1-s)^\alpha} +\frac{\alpha ((1+t)^\alpha+(1+s)^\alpha)(1-t)^{\alpha-1}}{((1-t)^\alpha+(1-s)^\alpha)^2} \]
\[ F_s (t,s) = \frac{\alpha(1+s)^{\alpha-1}}{(1-t)^\alpha+(1-s)^\alpha} +\frac{\alpha ((1+t)^\alpha+(1+s)^\alpha)(1-s)^{\alpha-1}}{((1-t)^\alpha+(1-s)^\alpha)^2}\]  are defined for all $(t,s)\in (-1,1)^2, t+s>0$ and $F_t >0$ and $F_s >0$.  Thus $\max_{(t,s)\in D_1}F(t,s)$ is attained on the boundary  $\partial D_1 = D_0 \cup \{(t,s)\in D \mid t+s=1\}$. Note that $F(t,s)\ge 1$ for $(t,s)\in D$ and $F(t,s)=1$ if and only if $t+s=1$. Hence 
\begin{equation}\label{eq:F_max}
\max_{(t,s)\in D_1}F(t,s) = \max_{(t,s)\in D_0}F(t,s).
\end{equation}
The partial derivatives of $G$
\[ G_t (t,s) = \frac{\alpha(1+t)^{\alpha-1}}{(t+s)^\alpha +2^\alpha} -\frac{\alpha ((1+t)^\alpha+(1+s)^\alpha)(t+s)^{\alpha-1}}{((t+s)^\alpha+2^\alpha)^2}\]
\[ G_s (t,s)= \frac{\alpha(1+s)^{\alpha-1}}{(t+s)^\alpha +2^\alpha} -\frac{\alpha ((1+t)^\alpha+(1+s)^\alpha)(t+s)^{\alpha-1}}{((t+s)^\alpha+2^\alpha)^2}\]
are defined for all $(t,s)\in (-1,1)^2, t+s > 0$ and $G_t = G_s = 0$ if and only if $t=s=1$. Thus $\max_{(t,s)\in D_2}G(t,s)$ is attained on the boundary $\partial D_2 = D_0 \cup \{(t,s)\in D \mid t=1\} \cup \{(t,s)\in D \mid s=1\} $. 
Note also that $G(t,s)\ge1$ for $(t,s)\in D$ and $G(t,s)=1$ if and only if $t=1$ or $s=1$. Hence 
\begin{equation}\label{eq:G_max}
\max_{(t,s)\in D_2}G(t,s) = \max_{(t,s)\in D_0}G(t,s).
\end{equation}
The conclusion follows from (\ref{eq:max_subsets}) together with (\ref{eq:F_max}) and (\ref{eq:G_max}). 
\end{proof}

The following result shows that $\max_{(t,s)\in D_0}F(t,s)$ is attained when $t=s$.

\begin{lemma}\label{lem:max_on_diagonal}
Let $0<\alpha<1$. Let $F,G\colon D \to \R$ be given by (\ref{eq:F_and_G}) and let $D_0 = \{(t,s)\in D \mid F(t,s) = G(t,s) \}$. Then 
\[ 
\max_{(t,s)\in D_0}F(t,s) = \left(\frac{1+a}{1-a}\right)^\alpha,
\]
where $0< a <1$ is the unique solution of $F(a,a) = G(a,a)$. 
\end{lemma}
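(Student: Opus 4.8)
The plan is to use the symmetry of $F$ and of $D_0$ under $(t,s)\mapsto(s,t)$ to reduce the maximization to a one-variable problem. First I would rewrite $D_0$: since $(1+t)^\alpha+(1+s)^\alpha>0$ on $D$, the equation $F(t,s)=G(t,s)$ from \eqref{eq:F_and_G} is equivalent to $(1-t)^\alpha+(1-s)^\alpha=(t+s)^\alpha+2^\alpha$, so $D_0=\{(t,s)\in D \mid (1-t)^\alpha+(1-s)^\alpha=(t+s)^\alpha+2^\alpha\}$. On the diagonal this reads $2(1-t)^\alpha=(2t)^\alpha+2^\alpha$; the left side decreases strictly from $2$ to $0$ and the right side increases strictly from $2^\alpha$ to $2^{1+\alpha}$ as $t$ runs over $[0,1]$, which gives the unique $a\in(0,1)$ with $(a,a)\in D_0$ and $F(a,a)=\bigl(\tfrac{1+a}{1-a}\bigr)^{\alpha}$.

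The key structural step is that every $(t,s)\in D_0$ satisfies $\sigma:=t+s\le 2a$, with equality only at $t=s=a$. This follows from concavity of $r\mapsto r^{\alpha}$, which gives $(1-t)^\alpha+(1-s)^\alpha\le 2^{1-\alpha}(2-\sigma)^\alpha$ with equality iff $t=s$; together with the relation defining $D_0$ this yields $\psi(\sigma):=2^{1-\alpha}(2-\sigma)^\alpha-\sigma^\alpha-2^\alpha\ge 0$, and since $\psi$ is continuous with $\psi(0)=2-2^\alpha>0$, $\psi(2)=-2^{1+\alpha}<0$ and $\psi'<0$ on $(0,2)$, the set $\{\psi\ge0\}$ is $[0,\sigma^{*}]$ with $\sigma^{*}=2a$ (evaluate on the diagonal). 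Moreover, for each $\sigma\in[0,2a)$ the fibre $\{(t,s)\in D_0\mid t+s=\sigma\}$ is a single orbit of the involution, because writing $p=1-t$, $q=1-s$ the unordered pair $\{p,q\}$ is determined by $p+q=2-\sigma$ together with $p^\alpha+q^\alpha=\sigma^\alpha+2^\alpha$ (as $r\mapsto(c-r)^\alpha+(c+r)^\alpha$ is strictly decreasing in $|r|$). Since $F$ is symmetric, $F|_{D_0}$ then factors through $\sigma$ as $\Psi(\sigma)=\dfrac{(2-p)^\alpha+(2-q)^\alpha}{\sigma^\alpha+2^\alpha}$, so that
\[
\max_{(t,s)\in D_0}F(t,s)=\max_{\sigma\in[0,2a]}\Psi(\sigma),
\]
and $\Psi(2a)=F(a,a)=\bigl(\tfrac{1+a}{1-a}\bigr)^{\alpha}$.

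It remains to prove that $\Psi$ is strictly increasing on $[0,2a]$; this is the main obstacle. Implicitly differentiating the relations $p+q=2-\sigma$ and $p^\alpha+q^\alpha=\sigma^\alpha+2^\alpha$ gives explicit formulas for $\tfrac{dp}{d\sigma}$ and $\tfrac{dq}{d\sigma}$ (of opposite sign, so $p$ and $q$ approach one another as $\sigma\uparrow 2a$), hence an explicit formula for $\Psi'(\sigma)$; after clearing the positive denominator, the inequality $\Psi'(\sigma)>0$ reduces to an inequality among products of the powers $(1\pm t)^{\alpha-1}$, $(1\pm s)^{\alpha-1}$, $(t+s)^{\alpha-1}$ and $(1\pm t)^\alpha$, $(1\pm s)^\alpha$, which holds because $0<\alpha<1$ makes $r\mapsto r^{\alpha-1}$ decreasing while $r\mapsto r^{\alpha}$ is increasing and concave — and which genuinely uses the $D_0$-relation to replace $\sigma^\alpha+2^\alpha$ by $(1-t)^\alpha+(1-s)^\alpha$, since the inequality is false off $D_0$. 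I expect this last verification to be the delicate computational step. An equivalent route is to parametrize $D_0$ by $t$ as a graph $s=\gamma(t)$ via the implicit function theorem (with $\gamma$ decreasing and $\gamma(a)=a$) and to show that $\tfrac{d}{dt}F(t,\gamma(t))=(F_t h_s-F_s h_t)/h_s$ vanishes only at $t=a$, where $h(t,s)=(1-t)^\alpha+(1-s)^\alpha-(t+s)^\alpha-2^\alpha$; this leads to the same core inequality. Finally, since $\Psi(0)=F(1,-1)=1<F(a,a)$, strict monotonicity forces $\max_{\sigma\in[0,2a]}\Psi(\sigma)=\Psi(2a)=\bigl(\tfrac{1+a}{1-a}\bigr)^{\alpha}$, which is the asserted value of $\max_{D_0}F$; together with Lemma \ref{lem:max_on_intersection} it also gives $C(\R^1,d_E^\alpha)=\bigl(\tfrac{1+a}{1-a}\bigr)^{\alpha}$.
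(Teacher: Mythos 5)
Your reduction is attractive and the preparatory steps are correct: rewriting $D_0$ as $(1-t)^\alpha+(1-s)^\alpha=(t+s)^\alpha+2^\alpha$, locating the unique diagonal point $(a,a)$, proving $t+s\le 2a$ on $D_0$ via concavity of $r\mapsto r^\alpha$, and observing that the unordered pair $\{1-t,1-s\}$ is determined by $\sigma=t+s$ (so that $F|_{D_0}$ descends to a function $\Psi$ of $\sigma$) are all sound. But the proof has a genuine gap exactly where the lemma's content lies: the claim that $\Psi$ is strictly increasing on $[0,2a]$ is never established. You acknowledge this yourself (``I expect this last verification to be the delicate computational step''), and the justification offered --- that after clearing denominators the inequality ``holds because $r\mapsto r^{\alpha-1}$ is decreasing while $r\mapsto r^\alpha$ is increasing and concave'' --- is not an argument. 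It is also not obviously salvageable by soft means: for instance, bounding the numerator $(1+t)^\alpha+(1+s)^\alpha\le 2^{1-\alpha}(2+\sigma)^\alpha$ by concavity and comparing with $\Psi(2a)$ fails, because $u\mapsto (1+u)^\alpha/(1+u^\alpha)$ is \emph{decreasing} on $(0,1)$, so the resulting upper bound exceeds $\Psi(2a)$ for $\sigma<2a$. Some genuinely two-variable inequality along $D_0$ has to be proved, and that is missing.

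For comparison, the paper does not attempt monotonicity along the fibration; it runs a tangency (Lagrange-multiplier) argument: at an interior extremum of $F|_{D_0}$ the level curve of $F$ and the curve $F=G$ must be tangent, and after substituting $x=\tfrac{1-b}{1-a}$, $y=\tfrac{1+b}{1+a}$ the resulting necessary condition \eqref{eq:local_max_necessary} is shown to have a definite sign off the diagonal, using that $t\mapsto \frac{1-t^{1-\alpha}}{(1-t)^{1-\alpha}}$ is decreasing on $(0,1)$. That specific monotonicity lemma is the real work, and it (or an equivalent inequality) is what your ``delicate computational step'' would have to reproduce. If you can prove $\Psi'>0$ --- or even just $\Psi(\sigma)\le\Psi(2a)$ --- your route would be a legitimate and somewhat more structured alternative, since the bound $\sigma\le 2a$ and the one-dimensional reduction are cleaner than chasing critical points; as it stands, however, the proof is incomplete.
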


\begin{proof}
Notice that if $(t,s)\in D_0$ then $t=-1$ if and only if $s=1$ and $F(-1,1)=1$. By symmetry, $F(1,-1) =1$. Since $F(t,s)\ge 1$ on $D_0$, the maximum of $F|_{D_0}$, the restriction of $F$ to $D_0$, is not attained at $(-1,1)$ or $(1,-1)$. Let $(a,b)\in D_0$, with $a\neq \pm 1$. If $F$ attains a local extremum at $(a,b)$ subject to the constrain $F(t,s)=G(t,s)$, then the level curves $\{(t,s)\in D\mid F(t,s)=F(a,b)\}$ and $\{(t,s)\in D\mid F(t,s) - G(t,s) = 0\}$ are both tangent at $(a,b)$. Since $F_s(a,b)-G_s(a,b)\neq 0$, by the Implicit Function Theorem, there exists an open neighbourhood $U\subseteq (-1,1)$ of $a$ and a function $\omega = \omega(t)$ such that $F(t,\omega(t)) - G(t,\omega(t))=0$ for $t\in U$. Furthermore,
\[\omega'(t) = -\frac{(t+\omega)^{\alpha-1} + (1-t)^{\alpha-1}}{(t+\omega)^{\alpha-1}+(1-\omega)^{\alpha-1}}\] 
for all $t\in U$. 
Similarly, since $F_s(a,b)\neq 0$, there exists an open neighbourhood $V\subseteq (-1,1)$ of $a$ and a function $\nu = \nu(t)$ on $V$ such that $F(t,\nu(t)) = F(a,b)$ on $V$. Also, for all $t\in V$,
\[\nu'(t) = -\frac{(1+t)^{\alpha-1} + F(a,b) (1-t)^{\alpha-1}}{(1+\nu)^{\alpha-1} + F(a,b) (1-\nu)^{\alpha-1}}.\]
Hence, a necessary condition for $(a,b)$ to be a point of local extremum for $F|_{D_0}$ is that $\omega'(a) = \nu'(a)$. Using that $\omega(a) = \nu(a) = b$, that is,
\[
\frac{(a+b)^{\alpha-1} + (1-a)^{\alpha-1}}{(a+b)^{\alpha-1}+(1-b)^{\alpha-1}}=\frac{(1+a)^{\alpha-1} + F(a,b) (1-a)^{\alpha-1}}{(1+b)^{\alpha-1} + F(a,b) (1-b)^{\alpha-1}},
\]
equivalently,
\begin{align*}
(a+b)^{\alpha-1}\left[ (1+b)^{\alpha-1} \right.& \left.+F(a,b)(1-b)^{\alpha-1} - (1+a)^{\alpha-1} - F(a,b)(1-a)^{\alpha-1} \right] \\
& \qquad \quad + (1-a)^{\alpha-1}(1+b)^{\alpha-1} - (1-b)^{\alpha-1}(1+a)^{\alpha-1} = 0.
\end{align*}
Using that  $F(a,b) =\frac{(1+a)^\alpha+(1+b)^\alpha}{(1-a)^\alpha+(1-b)^\alpha} = \frac{(1+a)^\alpha+(1+b)^\alpha}{(a+b)^\alpha +2^\alpha}$,  the above equality holds if and only if 
\begin{align*}
& (a+b)^{\alpha-1} \left\{ [ (1+b)^{\alpha-1} - (1+a)^{\alpha-1}] [ (1-a)^\alpha + (1-b)^\alpha ]  \right. \\
&\qquad\ \qquad \qquad \left. + [ (1-b)^{\alpha-1} - (1-a)^{\alpha-1}] [ (1+a)^\alpha + (1+b)^\alpha ] \right\} \\
&\qquad + [ (1-a)^{\alpha-1}(1+b)^{\alpha-1} - (1+a)^{\alpha-1}(1-b)^{\alpha-1} ] [ (a+b)^\alpha + 2^\alpha] = 0,
\end{align*}
equivalently,
\begin{align*}
 2 (a+b)^{\alpha-1} \left[ (1-b^2)^{\alpha-1}\right. & \left. -  (1-a^2)^{\alpha-1} \right]  \\
& + 2^\alpha[ (1-a)^{\alpha-1}(1+b)^{\alpha-1} - (1+a)^{\alpha-1}(1-b)^{\alpha-1} ] = 0
\end{align*}
Factoring out $2(a+b)^{\alpha-1}(1-b^2)^{\alpha-1}\neq 0$ yields
\begin{equation}\label{eq:local_max_necessary} 
1-\left(\tfrac{1-b}{1-a}\right)^{1-\alpha}\left(\tfrac{1+b}{1+a}\right)^{1-\alpha} - \left(\tfrac{a+b}{2}\right)^{1-\alpha}\left[\left(\tfrac{1+b}{1+a}\right)^{1-\alpha} - \left(\tfrac{1-b}{1-a}\right)^{1-\alpha} \right] = 0
\end{equation}
Assume $a<b$. Since $a+b>0$, this implies $b>0$ and $-b <a <b$.
In particular, $a^2<b^2$. Let $x=\frac{1-b}{1-a}$ and $y=\frac{1+b}{1+a}$. Then $0<x<1<y$, and $0<xy<1$.
Note that $\frac{a+b}{2} = \frac{1-xy}{y-x}$.
We claim that the expression on the left hand side of (\ref{eq:local_max_necessary}) is negative. That is, we claim,
\[
1-(xy)^{1-\alpha} - \left(\frac{1-xy}{y-x}\right)^{1-\alpha}\left(y^{1-\alpha}-x^{1-\alpha}\right) <0.
\]
Indeed, multiplying the above inequality by $(1-xy)^{\alpha-1}>0$ yields
\[\frac{1-(xy)^{1-\alpha}}{(1-xy)^{1-\alpha}} - \frac{y^{1-\alpha} - x^{1-\alpha}}{(y-x)^{1-\alpha}} <0,\]
equivalently,
\[\frac{1-(xy)^{1-\alpha}}{(1-xy)^{1-\alpha}} - \frac{1 - (x/y)^{1-\alpha}}{(1-x/y)^{1-\alpha}} <0\]
which is valid since the function $t\mapsto \frac{1-t^{1-\alpha}}{(1-t)^{1-\alpha}}, 0<t<1$, is decreasing and $0<x/y < xy <1$.

Note that the expression on the left hand side of (\ref{eq:local_max_necessary}) is positive if $a>b$.
Thus, (\ref{eq:local_max_necessary}) holds if and only if $a=b$. Finally, notice that $F(a,a)=G(a,a)$ has unique solution $0<a<1$.
Since $F(a,a) = \left[(1+a)/(1-a)\right]^\alpha>1$, the conclusion follows. 
\end{proof}

\begin{proof}[Proof of Theorem~\ref{thm:lineflake}]

If $\alpha = 1$, the conclusion holds with $m=1$ by Proposition~\ref{prop:hyperbolic}, since the space $(\R^1,d_E)$ is $0$-hyperbolic.  Let $0< \alpha <1$. By Lemmas~\ref{lem:max_on_intersection} and~\ref{lem:max_on_diagonal} 
\begin{align*}
C_0(\R^1,d_E^\alpha) & = \max_{(t,s)\in D_0}F(t,s) = F(a,a) = \left(\tfrac{1+a}{1-a}\right)^\alpha = m^\alpha
\end{align*}
where $
m=\frac{1+a}{1-a}>1$  is the unique solution of \[2 = \left(\frac{1+a - (1-a)}{1-a}\right)^\alpha + \left(\frac{1+a + (1-a)}{1-a}\right)^\alpha = (m-1)^\alpha + (m+1)^\alpha. \] 
\end{proof}

\begin{rem} \label{}
Let $0<\alpha \leq 1$.
It is {\it not} true in general that for any metric space $(X,d)$
the inequality
$C_0(X,d^\alpha) \leq (C_0(X,d))^\alpha$ holds.
For example, if $\alpha = 1/2$ then $m= 5/4$ as in Theorem~\ref{thm:lineflake} and so
  $$C_0(\IR^1, d_E^{1/2}) = \sqrt{5}/2 >  (C_0(\IR^1, d_E))^{1/2} = \sqrt{1}=1.$$
\end{rem}


\section{Distances on Riemannian manifolds}

We show that the quadrilateral constant of the metric space associated to a Riemannian manifold of dimension greater than one is
bounded from below by $\sqrt{2}$.

\begin{prop}\label{prop:Riemannian} 
If $M$ is a Riemannian manifold of dimension greater than one and $d_M$ is the distance on $M$ induced by the given Riemannian metric then 
$C_0(M,d_M)\ge \sqrt{2}$.
\end{prop}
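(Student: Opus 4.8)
The plan is to reduce the statement to a purely local computation in the tangent space at a point, using the fact that a Riemannian metric is, to first order, Euclidean. Concretely, fix a point $p \in M$. Since $\dim M \geq 2$, the tangent space $T_pM$ contains two orthonormal vectors $u, v$ (with respect to the inner product $g_p$). I would consider the four points obtained by exponentiating small multiples of $0, u, v, u+v$: for small $\varepsilon > 0$ set $x_\varepsilon = \exp_p(0) = p$, $y_\varepsilon = \exp_p(\varepsilon u)$, $z_\varepsilon = \exp_p(\varepsilon v)$, $w_\varepsilon = \exp_p(\varepsilon(u+v))$. The key input is that the Riemannian distance function satisfies $d_M(\exp_p(a), \exp_p(b)) = \|a - b\|_{g_p} + o(\varepsilon)$ as $\varepsilon \to 0$, uniformly for $a, b$ ranging over a bounded set scaled by $\varepsilon$ — this is a standard consequence of the fact that $\exp_p$ has derivative the identity at $0$ and that $d_M$ is Lipschitz-comparable to the coordinate metric in a normal neighborhood.

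The main steps, in order: (1) recall/cite that in geodesic normal coordinates centered at $p$, $d_M(\exp_p(a), \exp_p(b)) = \|a-b\|_{g_p}(1 + O(\varepsilon^2))$ for $\|a\|, \|b\| \leq C\varepsilon$; (2) compute the six pairwise distances among $x_\varepsilon, y_\varepsilon, z_\varepsilon, w_\varepsilon$, obtaining $d_M(x_\varepsilon, y_\varepsilon), d_M(x_\varepsilon, z_\varepsilon), d_M(y_\varepsilon, w_\varepsilon), d_M(z_\varepsilon, w_\varepsilon)$ all equal to $\varepsilon(1 + o(1))$, while $d_M(x_\varepsilon, w_\varepsilon) = \varepsilon\sqrt{2}(1+o(1))$ and $d_M(y_\varepsilon, z_\varepsilon) = \varepsilon\sqrt{2}(1+o(1))$, mirroring the Euclidean configuration $\Delta(u,v,0,u+v) = \sqrt 2$ used in Proposition~\ref{prop:euclidean}; (3) plug into $\Delta$ from \eqref{eq:Delta}: the numerator $d_M(x_\varepsilon,z_\varepsilon) + d_M(y_\varepsilon,w_\varepsilon) = 2\varepsilon(1+o(1))$ and the denominator $\max\{d_M(x_\varepsilon,y_\varepsilon) + d_M(z_\varepsilon,w_\varepsilon), \, d_M(x_\varepsilon,w_\varepsilon) + d_M(y_\varepsilon,z_\varepsilon)\}$; here one must be careful to label the four points so that the "long diagonal" pair $\{x_\varepsilon, w_\varepsilon\}$ and $\{y_\varepsilon, z_\varepsilon\}$ end up in the denominator's max-term giving $2\sqrt 2\,\varepsilon(1+o(1))$ — wait, that gives $\Delta \to 1/\sqrt2 < 1$, so instead one assigns the roles as in Proposition~\ref{prop:euclidean}, taking the numerator pair to be the two "side" pairs summing appropriately; the correct assignment yields $\Delta(x_\varepsilon, y_\varepsilon, z_\varepsilon, w_\varepsilon) \to \sqrt 2$ as $\varepsilon \to 0$; (4) conclude from \eqref{eq:Czero} that $C_0(M, d_M) \geq \sqrt 2$.

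The main obstacle — really the only nontrivial point — is making step (1) rigorous with a uniform error estimate: one needs that the $o(1)$ error in each of the six distances, after dividing by $\varepsilon$, tends to $0$ as $\varepsilon \to 0$, so that the ratio $\Delta$ converges to its Euclidean value. This follows from the smoothness of the metric in normal coordinates (the coordinate components $g_{ij}(x) = \delta_{ij} + O(|x|^2)$), which forces $d_M(\exp_p(a),\exp_p(b)) = \|a-b\| + O(\varepsilon^3)$ on the relevant scale; alternatively one can invoke that for any $\eta > 0$ there is a neighborhood on which the identity chart is $(1,1+\eta)$-bilipschitz from $(M,d_M)$ to a piece of Euclidean space, apply Proposition~\ref{prop:quasi_bound}(ii) to get $C_0$ of that neighborhood within a factor $(1+\eta)$ of $\sqrt2$ from below via the Euclidean configuration, and let $\eta \to 0$. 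Everything else is the routine distance bookkeeping already carried out in Proposition~\ref{prop:euclidean}, together with monotonicity of $C_0$ under passing to subspaces, Proposition~\ref{prop:basicstwo}(i).
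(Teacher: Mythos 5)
Your proposal is correct and follows essentially the same route as the paper: exponentiate small multiples of $0,u,v,u+v$ at a point $p$, use the normal-coordinate estimate $g_{ij}(x)=\delta_{ij}+O(\|x\|^2)$ to get $d_M(\exp_p(ta),\exp_p(tb))/t \to \|a-b\|_{g_p}$, and pass to the limit in $\Delta$ to recover the Euclidean value $\sqrt{2}$ from Proposition~\ref{prop:euclidean}. Your only wobble is the labeling in step (3) — the two diagonals $d(u,v)$ and $d(0,u+v)$ must sit in the numerator, i.e.\ one evaluates $\Delta(u,v,0,u+v)$ exactly as in Proposition~\ref{prop:euclidean} — which you already resolve by deferring to that configuration.
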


\begin{proof}
Let $p \in M$ and let  $\exp_p \colon T_p M \rightarrow M$ denote the Riemannian exponential map. 
The Riemannian metric on $M$ endows the tangent space, $T_p M $, with an inner product and we write $d_E$
for the corresponding Euclidean distance on $T_p M $.
For a vector $X \in T_p M$ and a scalar $t$, let $X_t = \exp_p (t X) \in M$.
If $X, Y \in  T_p M$ then
\begin{equation}\label{eq:riem}
\lim_{t \rightarrow 0}  \frac{d_M(X_t, Y_t)}{t} = d_E(X,Y).
\end{equation}
This is a consequence of the fact that  in normal coordinates $\{ x^i \}$ the components $g_{ij}(x)$ of the Riemannian
metric satisfy the estimate $|  g_{ij}(x) - \delta_{ij} | \leq C \| x \|^2$ for some $C$.

For $X, Y, Z, W \in T_p M$, not all identical,
\begin{align*}
\Delta(X_t, \, Y_t, \,  Z_t \, ,W_t) & =  \frac{d_M(X_t,Y_t)+d_M(Z_t ,W_t)}{\max\{d_M(X_t, Z_t) +d_M(Y_t, W_t), \, d_M(X_t ,W_t) +d_M(Y_t, Z_t)\}}    \\
                                                   & =  \frac{d_M(X_t,Y_t)/t+d_M(Z_t ,W_t)/t}{\max\{d_M(X_t, Z_t)/t +d_M(Y_t, W_t)/t, \, d_M(X_t ,W_t)/t +d_M(Y_t, Z_t)/t\}}
\end{align*}
By~\eqref{eq:riem}, $\lim_{t \rightarrow 0} \Delta(X_t, \, Y_t, \,  Z_t \, ,W_t)  = \Delta(X, \, Y, \,  Z, \, W)$,  where the second $\Delta$ is with respect to $d_E$.
Since $\dim M > 1$, there are orthogonal unit vectors $U, V \in  T_p M$.
Since
\[
C_0(M,d_M) \geq  \Delta(U_t, \, V_t, \,  0_t, \, (U+V)_t),
\]
it follows that 
\[
C_0(M,d_M) \geq \lim_{t \rightarrow 0} \Delta(U_t, \, V_t, \,  0_t, \, (U+V)_t) = \Delta(U, \, V, \,  0, \, U+V) = \sqrt{2},
\]
establishing the conclusion of the proposition.
\end{proof}

\begin{cor}\label{cor:nonnegsectional}
Let $M$ be a simply connected,
complete Riemannian manifold of non-positive sectional curvature
with associated distance $d_M$.
Then $C_0(M,d_M) =  \sqrt{2}$.
\end{cor}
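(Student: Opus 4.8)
The plan is to derive the equality by combining an upper bound coming from the $\CAT(0)$ theory developed in Section~\ref{sec:catzero} with the lower bound already established for Riemannian manifolds in Proposition~\ref{prop:Riemannian}.

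For the upper bound I would invoke the Cartan--Hadamard theorem: a complete, simply connected Riemannian manifold $M$ of non-positive sectional curvature is a Hadamard manifold, and such a manifold, equipped with its Riemannian distance $d_M$, is a $\CAT(0)$-space (see, e.g., \cite[II.1A.6 and Theorem II.4.1]{Bridson-Haefliger}). Corollary~\ref{cor:CATzero} then applies immediately — indeed $M$ is a $\CAT(0)$-space, hence trivially a subspace of one — and gives $C_0(M,d_M) \leq \sqrt{2}$.

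For the lower bound, observe that the asserted value $\sqrt{2}$ forces us to be in the case $\dim M > 1$: when $\dim M = 1$ the manifold $M$ is isometric to $\R$, which is $0$-hyperbolic, so $C_0(\R,d_E) = 1$ by Proposition~\ref{prop:hyperbolic}(ii). Thus, under the standing hypothesis $\dim M > 1$, Proposition~\ref{prop:Riemannian} yields $C_0(M,d_M) \geq \sqrt{2}$. Combining the two inequalities gives $C_0(M,d_M) = \sqrt{2}$.

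This is essentially a two-line deduction, so there is no genuine obstacle; the only points deserving a word of care are the citation that a Hadamard manifold is a $\CAT(0)$-space (so that Corollary~\ref{cor:CATzero} is applicable) and the harmless remark that the dimension hypothesis $\dim M > 1$ is needed for the lower bound and is implicit in the statement.
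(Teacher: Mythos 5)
Your proof is correct and follows exactly the paper's argument: Cartan--Hadamard plus \cite[II.1, Theorem 1A.6]{Bridson-Haefliger} gives the $\CAT(0)$ upper bound via Corollary~\ref{cor:CATzero}, and Proposition~\ref{prop:Riemannian} gives the lower bound. Your side remark that the hypothesis $\dim M > 1$ is implicitly needed (the statement fails for $M \cong \R$) is a fair and correct observation about the corollary as stated.
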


\begin{proof}
By \cite[Chapter II.1, Theorem 1A.6]{Bridson-Haefliger},
the metric space $(M,d_M)$ is a $\CAT(0)$-space and so
$C_0(M,d_M) \leq  \sqrt{2}$ by Corollary~\ref{cor:CATzero}.
By Proposition~\ref{prop:Riemannian}, $C_0(M,d_M) \geq  \sqrt{2}$.
Thus $C_0(M,d_M) =  \sqrt{2}$.
\end{proof}

\bibliographystyle{amsalpha}
%
\providecommand{\bysame}{\leavevmode\hbox to3em{\hrulefill}\thinspace}
\providecommand{\MR}{\relax\ifhmode\unskip\space\fi MR }
\providecommand{\MRhref}[2]{%
  \href{http://www.ams.org/mathscinet-getitem?mr=#1}{#2}
}
\providecommand{\href}[2]{#2}
%
%


\end{document}